\newcommand{\R}{\mathbb{R}}
\newcommand{\Z}{\mathbb{Z}}
\newcommand{\cA}{\mathcal{A}}
\newcommand{\cB}{\mathcal{B}}
\newcommand{\cD}{\mathcal{D}}
\newcommand{\cE}{\mathcal{E}}
\newcommand{\cH}{\mathcal{H}}
\newcommand{\cJ}{\mathcal{J}}
\newcommand{\cM}{\mathcal{M}}
\newcommand{\cQ}{\mathcal{Q}}
\newcommand{\cU}{\mathcal{U}}
\newcommand{\conj}[1]{\overline{#1}} 
\newcommand{\dint}{\,\mathrm{d}} 
\newcommand{\eps}{\varepsilon} 
\newcommand{\mmid}{\parallel}
\newcommand{\Yil}{Y{\i}ld{\i}r{\i}m}
\newcommand{\cl}[1]{\overline{#1}}
\renewcommand{\leq}{\leqslant}
\renewcommand{\geq}{\geqslant}
\newcommand{\Tn}{T^{\neq}}
\newcommand{\ignore}[1]{}
\DeclareMathOperator*{\sumst}{\sum{}^*}
\DeclareMathOperator*{\dsum}{\sum \sum}
\DeclareMathOperator*{\tsum}{\sum \sum \sum}
\DeclareMathOperator*{\mo}{mod \,}
\DeclareMathOperator{\e}{e}
\theoremstyle{plain}
\newtheorem{thm}{Theorem}
\newtheorem{prop}[thm]{Proposition}
\newtheorem{cor}[thm]{Corollary}
\newtheorem{lem}[thm]{Lemma}
\newtheorem*{primegaps}{Prime Gaps Theorem}
\newtheorem*{mainthm}{Main Theorem}
\newtheorem*{primecousins}{Prime Cousins Theorem}
\theoremstyle{definition}
\newtheorem*{swcond}{S-W Condition}
\theoremstyle{remark}
\newtheorem*{rem}{Remark}
\newtheorem*{rems}{Remarks}
\newtheorem*{examps}{Examples}
\numberwithin{thm}{section}
\numberwithin{equation}{section}
\numberwithin{figure}{section}
\begin{document}

\title{Close encounters among the primes}
\author{J. B. Friedlander and H. Iwaniec}
\date{}
\thanks{Research of JF supported in part by NSERC Grant A5123  
and that of HI supported in part by NSF Grant DMS-1101574.
We thank Leo Goldmakher and Pedro Pontes for their help with the 
physical appearance of the paper.}

\address{J.B.Friedlander: Department of Mathematics, University of Toronto, 
M5S 2E4 Canada}
\address{
H. Iwaniec: Department of Mathematics, Rutgers University, 
New Brunswick NJ 08903, USa}

\begin{titlepage}

\maketitle

\begin{abstract}

This paper was written, apart from one technical correction, 
in July and August of 2013. The, then very recent,
breakthrough of Y. Zhang~\cite{Z} had revived in us an intention to produce 
a second edition of our book ``Opera de Cribro'', one which would include 
an account of Zhang's result, stressing the sieve aspects of the method. A 
complete and connected version of the proof, in our style but not intended 
for journal publication, seemed a natural first step in this project. 

Following the further spectacular advance given 
by J. Maynard (arXiv:1311.4600, Nov 20, 2013), we have had to re-think 
our position. 
Maynard's method, at least in its current form, proceeds from GPY in 
quite a different direction than does Zhang's, and achieves numerically 
superior results. Consequently, although Zhang's contribution to the 
distribution of primes in arithmetic progressions certainly 
retains its importance, the fact remains that much of the material 
in this paper would no longer appear in a new edition of our book. 

Because this paper contains some innovations that we do not wish to become 
lost, we have decided to make the work publicly available in its current form.

\end{abstract} 
\thispagestyle{empty}

\end{titlepage}

\setcounter{tocdepth}{2}

\tableofcontents


\newpage
\section{\bf{Introduction}}

The twin prime conjecture, dating from antiquity, predicts that there are infinitely many pairs of consecutive odd integers, both of which are prime. Letting $p_n$ denote the $n$-th prime number, we can write this as
\begin{equation}
 p_{n+1} - p_n
 = 2
\end{equation}
infinitely often. One knows from the Prime Number Theorem the much weaker statement that
\begin{equation}
 \varliminf_{n \to \infty} \frac{p_{n+1} - p_n}{\log p_n}
 \leq 1,
\label{e12liminfPNT}
\end{equation}
because $\log p_n$ is the average gap. Even replacing the right hand side of~\eqref{e12liminfPNT} by smaller constants took a painfully long time with many contributors, beginning with P. Erd{\H o}s \cite{E} and continuing with \cite{BD}, \cite{H}, \cite{M},~\dots

Two major breakthroughs have occurred during the past few years. In the first of these, D. Goldston, J. Pintz, and C. Y. \Yil \cite{GPY}, building on earlier work \cite{GY}, proved that indeed
\begin{equation}
 \varliminf_{n \to \infty} \frac{p_{n+1} - p_n}{\log p_n} = 0.
\label{e13liminfGY}
\end{equation}
Quite recently, Y. Zhang \cite{Z} has proven that the gaps between consecutive primes are absolutely bounded infinitely often, specifically that
\begin{equation}
 p_{n+1} - p_n
 \leq 70 \, 000 \, 000
\label{e14zhang}
\end{equation}
infinitely often. The proof draws on a wealth of ideas from previous sources while combining significant innovations of its own.

In the authors' book \cite{FI2} we gave an account of the GPY results, following their general outline but introducing some innovations of our own, mainly for the purpose of clarifying the sieve ideas. It is our goal in the current writing to proceed in analogous fashion with the work \cite{Z} of Zhang.

There is a tremendous amount of interest in lowering the numerical bound in~\eqref{e14zhang}, particularly in the pending Polymath8 project. Although we are aware of many possibilities, it is not our intention to pursue them here. Nevertheless, we should specify some bound; we prove the following

\begin{primegaps}
There are infinitely many primes $p_n$ with
\begin{equation}
 p_{n+1} - p_n
 \leq 2 \, 448 \, 798.
\label{e15primeGaps}
\end{equation}
\end{primegaps}

In other words, we have:
\begin{primecousins}
There exists a positive even number $c \leq 2 \, 448 \, 798$ such that $p$ 
and $p+c$ are both primes infinitely often.
\end{primecousins}

Work on the small prime gaps problem has, ever since~\cite{BD}, depended crucially on the Bombieri-Vinogradov theorem for the distribution of prime numbers in residue classes to large moduli. This theorem (which has acted as a substitute for the Generalized Riemann Hypothesis in many applications) gives the bound
\begin{equation}
 \sum_{q \leq Q} \max_{(a,q) = 1} \left|
  \psi(x;q,a) - \frac{x}{\varphi(q)}
  \right|
 \ll x (\log x)^{-A}
\label{e16BomVinDistMod}
\end{equation}
valid for $x \geq 2$, $Q \leq x^{\frac{1}{2}} (\log x)^{-B}$ and any $A \geq 1$, where both $B$ and the implied constant may depend on $A$.

In the fundamental paper \cite{GPY} the authors found that the exponent~$\frac{1}{2}$ was a key threshold. Had it been anything smaller than~$\frac{1}{2}$, their arguments would have failed to obtain~\eqref{e13liminfGY}. On the other hand, could it be replaced by anything larger, say $\frac{1}{2} + \delta$, they could prove the much stronger bounded gaps result
\begin{equation}
 p_{n+1} - p_n
 < c(\delta)
\label{e17liminfGPYcond}
\end{equation}
infinitely often with a positive constant $c(\delta)$ depending on $\delta$.

The surprising, nevermind conditional, result~\eqref{e17liminfGPYcond} of 
Goldston-Pintz-Y{\i}ld{\i}r{\i}m rekindled enhanced interest in 
extending~\eqref{e16BomVinDistMod} type estimates with 
$Q = x^{\frac{1}{2} + \delta}$, going beyond the Riemann Hypothesis 
threshold. There had already been a number of earlier results of 
such quality, by E. Bombieri, E. Fouvry, J. B. Friedlander and 
H. Iwaniec in the series of papers \cite{FoIw}, \cite{F}, 
\cite{BFI1}, \cite{BFI2}, \cite{BFI3}. 
These results did not hold uniformly over all reduced classes $a \pmod{q}$ 
but required one to fix an $a \neq 0$ and then sum over moduli 
$q \sim Q$, $(q,a) = 1$. 
As such, they could not be directly married to the GPY argument.

In the GPY scheme, one considers an admissible set (see~\eqref{e22QDef}-\eqref{e23detHDef})
\[
 \cH
 = \{ h_1,\dots,h_k\}
\]
of positive integers and then applies a sieve to attempt to show that, for many $n$'s, the set $\{n-h_1,\dots,n-h_k\}$ contains more than one prime, hence at least two primes! In applying a sieve to the polynomial $(X - h_1)\cdots(X-h_k)$, one has to consider, in the resulting congruence sums (see p.~125 of \cite{FI2}) more than a single class, yet nowhere near as many as $\varphi(q)$ of them, and these relatively few classes are tied together in arithmetic fashion by the Chinese Remainder Theorem. Even so, the problem of proving a~\eqref{e16BomVinDistMod} type bound for the sum over all moduli $q \sim Q$ in a range $Q = x^{\frac{1}{2} + \delta}$ is still open. Zhang succeeds by considering a sum wherein the moduli run only over smooth numbers, that is, those with no large prime factors, and $a \pmod{q}$ is restricted to the roots of
\[
 \prod_{i \neq j} (X + h_i - h_j)
 \equiv 0 \pmod{q}.
\]
The fact is that, in the GPY sieve, the restriction of the support of 
the sieve weights to smooth numbers does not greatly alter the strength 
of the sieve bounds. This latter phenomenon had been noted and utilized 
by Y. Motohashi and J. Pintz \cite{MP}, and it was also known 
by others. We have articulated the relevant features in various
presentations of sieve capabilities in a broader context. 
Zhang, however, was the one who made the whole 
thing work. The added flexibility in being able to factor the moduli 
in almost arbitrary proportion enabled Zhang to improve the bounds of 
the exponential sums which are required for this approach.

The paper is organized as follows. In Section~\ref{c2GPYRest} we study 
the restricted GPY sieve. We can borrow much of our arguments in this 
part from our book~\cite{FI2}. In Section~\ref{c3proofMainThm} we state 
Proposition~\ref{p34LambdaSumRichEst}, which is the modified extension 
of the bound~\eqref{e16BomVinDistMod}, and we begin its proof. We decompose 
sums over primes by means of combinatorial arrangements as did Zhang (which 
was in turn patterned after that in~\cite{BFI1} where Heath-Brown's formula 
is chosen for the task). This leads us to multiple Dirichlet convolution 
forms which are reduced to two basic types. The first of these, a bilinear 
form, is placed in a general context in Section~\ref{c4bilForms} and 
then estimated in Section~\ref{c5disp} using the dispersion method 
of Linnik \`a la~\cite{FoIw} and~\cite{BFI1}, but with Zhang's modulus 
factorization playing a key role. Then, in Section~\ref{c6CroppDiv} we 
consider smooth croppings, to various sizes, of the divisor function 
$\tau_3(n)$ in residue classes $a \pmod{q}$. The sums of these cropped 
divisors, together with the bilinear forms from Section~\ref{c4bilForms},  
are building blocks for covering the sums over primes. We provide uniform 
estimates 
for single moduli $q$ and every fixed reduced class $a \pmod{q}$ based on 
arguments originating in~\cite{FI1}, but again with a key modification due 
to Zhang which takes advantage of $q$ being a smooth modulus. Actually, one 
could exploit averaging over these special moduli $q$, and this would be 
one of the sources for a lowering of the constant 
in~\eqref{e15primeGaps}, cf. 
E. Fouvry, E. Kowalski, and P. Michel~\cite{FKM2}. 

\ignore{
\medskip
{\bf Comments added, November 30 2013:}

This paper was written in July and August of 2013. The, then very recent,
breakthrough of Y. Zhang had revived in us an intention to produce a second 
edition of our book ``Opera de Cribro'', one which would include an account 
Zhang's result, stressing the sieve aspects of the method. A complete and 
connected version of the proof, in our style but not intended for journal 
publication, seemed a natural first step in this project. 

Following the further spectacular advance given 
by J. Maynard (arXiv:1311.4600, Nov 20, 2013), we have had to re-think 
our position. 
Maynard's method, at least in its current form, proceeds from GPY in 
quite a different direction than does Zhang's, and achieves numerically 
superior results. Consequently, although Zhang's contribution to the 
distribution of primes in arithmetic progressions certainly 
retains its importance, the fact remains that much of the material 
in this paper would no longer appear in a new edition of our book. 

Because this paper contains some innovations that we do not wish to become 
lost, we have decided to make the work publicly available in its original 
form.  
} 

\section{\bf{The GPY Restricted Sieve}}
\label{c2GPYRest}

\subsection{Introduction}

The starting point in Zhang's paper \cite{Z} is the lovely construction of 
the sifted sum by Goldston-Pintz-\Yil, which itself is based on Selberg's 
sieve weights. Zhang applies to their construction the same weights 
$\lambda_d$, but with an extra restriction of the support to ``smooth'' 
numbers, that is; the numbers $d$ which have no large prime divisors, say
\begin{equation}
 d \mid P(z),
\label{e21smoothNum}
\end{equation}
where $P(z)$ denotes the product of all primes $p < z$, and $z$ is relatively 
small. If $z$ is not extremely small, it turns out that such a restriction 
sacrifices very little in the GPY estimates. This phenomenon appears in any 
sieve having large dimension, and it is particularly pronounced in the 
Selberg $\Lambda^2$-sieve, see Section 7.10 of~\cite{FI2}. On the other 
hand, the restriction \eqref{e21smoothNum} introduces a much-desired 
flexibility in the arrangement of bilinear forms out of primes and sieve 
weights. This extra feature permits stronger estimates for the exponential 
sums which control the remainder terms. The gain amply compensates for the 
loss caused by~\eqref{e21smoothNum}.

Much of the material required exists already in the book~\cite{FI2} in 
thinly disguised form; therefore, we shall make frequent reference to this 
material. We begin by recalling the GPY construction (in our format).

Let $\cH$ be a finite set of $k \geq 2$ positive integers,
\begin{equation}
 Q(X)
 = \prod_{h \in \cH} (X - h)
\label{e22QDef}
\end{equation}
and $\omega(d)$ the number of roots of $Q(x) \equiv 0 \pmod{d}$. We say that 
the set $\cH$ is \emph{admissible} if $\omega(p) < p$ for every prime $p$. 
In particular, it shows that $\cH$ does not contain two consecutive integers. 
Let $\Delta$ be a positive squarefree integer which is divisible by every 
prime factor of
\begin{equation}
 \det \cH
 = \prod_{h \neq h'} (h - h')
 = \prod_{h \in \cH} |Q'(h)|.
\label{e23detHDef}
\end{equation}
Every prime $p \leq k$ divides $\Delta$. If $p \nmid \Delta$, then 
$\omega(p) = k$. Associated with the set $\cH$, we introduce two positive 
constants
\begin{equation}
 \gamma(\cH)
 = \prod_{p \mid \Delta} \left( 1 - \frac{\omega(p)}{p} \right),
  \quad
 H(\cH)
 = \prod_p \left(1 - \frac{\omega(p)}{p} \right) 
\left(1 - \frac{1}{p} \right)^{-k}.
\end{equation}

We consider the sequence $\cA = (a_n)$ with
\begin{equation}
 a_n
 = \sum_{h \in \cH} \Lambda(n - h)
  - \log n
\label{e25defan}
\end{equation}
for $n > \max \cH$. Our goal is to prove that $a_n$ is positive for infinitely 
many $n$. For every such $n$ it follows that at least two of the shifted 
integers $n - h$ with $h \in \cH$ are prime powers. Actually, the higher 
powers will make a negligible contribution, so we obtain gaps between primes 
bounded by the diameter of~$\cH$. We are not able to achieve this goal by 
summing the elements themselves, but can do so by weighting the sequence 
$\cA = (a_n)$ in the style which is familiar from Selberg's lower-bound 
sieve (see Section~7.6 of~\cite{FI2}). Specifically, we consider the 
following sum:
\begin{equation}
 W(D,N)
 = \sum_{(Q(n),\Delta) = 1} F(n / N) a_n
  \bigg( \sum_{d \mid Q(n)} \lambda_d \bigg),
\label{e26WeLB}
\end{equation}
where $(\lambda_d)$ is an upper-bound sieve of level $D$. We shall specify 
the sieve later and it will be a $\Lambda^2$-sieve. Here $F(t)$ is any fixed 
smooth function compactly supported in $\R^+$, $F(t) \geq 0$. Therefore the 
summation over $n$ in~\eqref{e26WeLB} is restricted to a segment $n \asymp N$. 
Of course, this smoothing device is introduced only for technical 
simplification; it has no bearing on the GPY construction.

We shall prove that, for a judiciously chosen sieve $(\lambda_d)$,
\begin{equation}
 W(D,N)
 \asymp N (\log N)^{1 - k}
\label{e27WAsymp}
\end{equation}
provided $k = |\cH|$ is sufficiently large; $k = 175561$ is fine. Hence, 
we shall derive (see Section~\ref{s34ProofsThms}) the following results

\begin{mainthm}
For all $N$ sufficiently large in terms of the set $\cH$ we have
\begin{equation}
 \sum_{\substack{N < n < 2N \\ (Q(n),P(w)) = 1}} \mathop{\sum \sum}_{h \neq h'}
  \Lambda(n - h) \Lambda(n - h')
 \asymp N (\log N)^{2-k} , 
\label{e28MainThm}
\end{equation}
where $w = N^{1/b}$ with $b$ a sufficiently large constant.
\end{mainthm}

The extra restriction $(Q(n),P(w)) = 1$ in~\eqref{e28MainThm} means that 
every number $n - h$ with $h \in \cH$ has no prime divisors smaller than 
$N^{1/b}$, hence it has at most $b$ prime factors. Ignoring this restriction 
we deduce from~\eqref{e28MainThm} that there exists 
$c \in \cH - \cH$, $c \neq 0$, such that $\pi_c(N)$, the number 
of primes $p$ in $N \le p < 2N$ with $p+c$ being a prime, satisfies 
the lower bound
\begin{equation}
 \pi_c(N)
 \gg N(\log N)^{-k}
\end{equation}
for a positive proportion of $N$'s.
A more precise expectation, the Hardy-Littlewood conjecture, predicts that 
for any even number $c \neq 0$ we have 
$\pi_c(N) \sim \mathfrak S N(\log N)^{-2}$ as $N\rightarrow\infty$,
where $\mathfrak S$ is a positive constant depending on $c$.


\subsection{Shuffling the Sieve Terms}

Given any sieve-weighted sum, its terms need to be rearranged so that the 
evaluation job boils down to that of the congruence sums for the sifting 
sequence. In this subsection we do so with~\eqref{e26WeLB} for the 
sequence \eqref{e25defan}. We also execute some easy parts of the relevant 
summations to clean and isolate those things which are more intricate. 
Throughout, we assume that $\lambda_d$ are supported on squarefree numbers 
$d < D$, $(d,\Delta) = 1$ and
\begin{equation}
 |\lambda_d|
 \leq \tau_3(d).
\label{e210LambdaBd}
\end{equation}
First, inserting~\eqref{e25defan} into~\eqref{e26WeLB} we get
\begin{equation}
 W(D,N)
 = \sum_{h \in \cH} V_h(D,N)
  - U(D,N) \log N
\label{e211WbyVhU}
\end{equation}
where we set
\begin{equation}
 V_h(D,N)
 = \sum_{(Q(n),\Delta)=1} \Lambda (n - h) F\left(\frac{n}{N}\right)
  \bigg( \sum_{d \mid Q(n)} \lambda_d \bigg) , 
\end{equation}
\begin{equation}
 U(D,N)
 = \sum_{(Q(n),\Delta)=1} F\left(\frac{n}{N}\right) \frac{\log n}{\log N}
  \bigg( \sum_{d \mid Q(n)} \lambda_d \bigg).
\end{equation}
The latter sum can be evaluated quickly by splitting into residue classes 
$\alpha \pmod{\Delta}$ and $\beta \pmod{d}$. We have
\begin{align*}
 \sum_{\substack{n \equiv \alpha \, (\Delta) \\ n \equiv \beta \, (d)}}
  F \left( \frac{n}{N} \right) \frac{\log n}{\log N}
 &= \frac{1}{\Delta d}
   \int F\left(\frac{t}{N}\right) \frac{\log t}{\log N} \dint t
   + O(1)
\\
 &= \frac{N}{\Delta d}
   \int F(t) \left(1 + \frac{\log t}{\log N} \right) \dint t
   + O(1) 
\end{align*}
 (throughout, we shall ignore the dependence on $F$ in 
any implied constants). 
Moreover, the number of classes $\alpha \pmod{\Delta}$ with 
$(Q(\alpha),\Delta) = 1$ is equal to $\gamma(\cH)\Delta$ and the number 
of classes $\beta \pmod{d}$ with $d \mid Q(\beta)$ is equal to $\tau_k(d)$. 
Hence
\begin{equation}
 U(D,N)
 = \gamma(\cH) G(D) \big\{
   \hat{F}(0) N + \tilde{F}(0) N / \log N
   \big\}
  + O \big( D ( \log D)^{3k} \big)
\end{equation}
where $\hat{F}(0)$, $\tilde{F}(0)$ are the integrals of $F(t)$, $F(t) \log t$, respectively, and
\begin{equation}
 G(D)
 = \sum_{(d,\Delta)=1} \lambda_d \tau_k(d) / d.
\label{e215GDef}
\end{equation}

Next, we are going to evaluate $V_h(D,N)$ for every $h$ separately. 
Given $h \in \cH$, let $Z_h(X)$ denote the polynomial
\begin{equation}
 Z_h(X)
 = X^{-1} Q(X+h)
 = \prod_{h' \neq h} (X + h - h').
\end{equation}
By shifting $n$ to $n+h$ and using $F((n+h)/N) = F(n/N) + O(h/N)$, we get
\[
 V_h(D,N)
 = \sum_{(nZ_h(n),\Delta)=1} \Lambda(n) F(n/N)
  \sum_{d \mid Z_h(n)} \lambda_d
  + O \big( D (\log D)^{3k} \big).
\]
Note that the contribution of $n$'s which are not coprime with $d$ is negligible, because $n$ is a prime power and $d < D$. Splitting the summation over $n$ into residue classes $\alpha \pmod{\Delta}$, $\beta \pmod{d}$ such that
\begin{equation}
 \big( \alpha Z(\alpha), \Delta \big) = 1,
  \quad
 (\beta,d) = 1,
  \quad
 d \mid Z_h(\beta) , 
\label{e217splitAlphaBeta}
\end{equation}
we obtain
\[
 \sum_d \lambda_d
 \sum_{\alpha \, (\Delta)}
 \sum_{\beta \, (d)}
 \sum_{\substack{n \equiv \alpha \, (\Delta) \\ n \equiv \beta \, (d)}} \Lambda(n) F(n/N).
\]
If the inner sum is replaced by its expected value
\begin{equation}
 \frac{1}{\varphi(q)}
 \sum_{(n,q) = 1} \Lambda(n) F(n/N),
  \qquad
 q = \Delta d,
\label{e218innSumExp}
\end{equation}
we get a free summation over all $\alpha,\beta$ satisfying~\eqref{e217splitAlphaBeta}. The number of $\alpha$'s is the same as before, namely $\gamma(\cH)\Delta$, but the number of $\beta$'s drops to $\tau_{k-1}(d)$. Moreover, \eqref{e218innSumExp} is well-approximated by $\hat{F}(0)N / \varphi(q)$ up to a very small error term $(\log q)/\varphi(q)$. Hence we can write
\begin{equation}
 V_h(D,N)
 = \gamma(\cH) G'(D) \hat{F}(0) N
  + R_h(D,N)
  + O \big( D(\log D)^{3k} \big),
\label{e219VhAppr}
\end{equation}
where
\begin{equation}
 G'(D)
 = \frac{\Delta}{\varphi(\Delta)}
  \sum_{(d,\Delta)=1} \lambda_d \tau_{k-1}(d) / \varphi(d)
\label{e220G'Exp}
\end{equation}
and $R_h(D,N)$ is the remainder
\begin{equation}
  \sumst_{\substack{\alpha\,(\Delta) \\ (Z_h(\alpha),\Delta)=1}}
  \sum_{d < D} \lambda_d
  \sumst_{\substack{\beta \, (d)\\d \mid Z_h(\beta)}} \Bigg(
   \sum_{\substack{n \equiv \alpha \, (\Delta)\\n \equiv \beta \, (d)}}
   \Lambda(n) F\left(\frac{n}{N}\right)
   - \frac{1}{\varphi(\Delta d)}
   \sum_{(n,\Delta d) = 1} \Lambda(n) F\left(\frac{n}{N}\right)
   \Bigg).
\label{eR_h}
\end{equation}

Note that the main term in~\eqref{e219VhAppr} is the same one for every 
$h \in \cH$, and we have $k = |\cH|$ of these terms. Adding $V_h(D,N)$ and 
subtracting $U(D,N)$ as in~\eqref{e211WbyVhU}, we arrive at the following 
formula for the GPY sifted sum: 
\begin{multline}
 W(D,N)
 = \gamma(\cH) N \hat{F}(0) \Big\{
   k G'(D)
   - G(D) \big( \log N + O(1) \big)
   \Big\}
\\
  + \sum_{h \in \cH} R_h(D,N)
  + O\big( D(\log D)^{3k} \big).
\end{multline}

The remainders $R_h(D,N)$, which are given by ~\eqref{eR_h} must 
satisfy, for every
$h \in \cH$, 
\begin{equation}
 R_h(D,N)
 \ll N (\log N)^{-A}
\label{e223RemBd}
\end{equation}
with some $A > k-1$ (for practical purposes, for every $A$), so they do not 
strike the main term. It is critical to have~\eqref{e223RemBd} for 
$D = N^\theta$ with $\theta > \frac{1}{2}$, and we shall achieve this 
level of moduli, but only for a special choice of the sieve weights 
$\lambda_d$.

At the same time, we also require the upper-bound 
sieve $(\lambda_d)$ of level $D$ to be such that
\begin{equation}
 k G'(D) - G(D) \log N
 > \eta (\log N)^{1 - k},
\label{e224SieveReq}
\end{equation}
with a small positive constant $\eta$, so that the lower bound 
in~\eqref{e27WAsymp} will hold (the corresponding upper bound follows 
easily by any reasonable sieve of dimension $k -1$).


\subsection{Choosing the Sieve Weights}

So far our transformations are valid for any upper-bound sieve $(\lambda_d)$ 
of level $D$, actually for any sequence $(\lambda_d)$ of real numbers 
satisfying~\eqref{e210LambdaBd}, but we are interested in special 
$(\lambda_d)$ which yield~\eqref{e224SieveReq}. Here we are faced with 
sieves of dimension $k-1$ in the case of $G'(D)$ and of dimension $k$ in 
the case of $G(D)$. It turns out that $k$ must be quite large to give the 
positive lower bound~\eqref{e224SieveReq}, and the $\Lambda^2$-sieve of 
Selberg works best in large dimensions. The $\Lambda^2$-sieve assumes 
that $\lambda_d$ are given in the form
\begin{equation}
 \lambda_d
 = \sum_{[d_1,d_2]=d} \rho_{d_1} \rho_{d_2}
\label{e225lambdaSelberg}
\end{equation}
where $(\rho_d)$ is any sequence of real numbers with
\begin{equation}
 \rho_1 = 1,
  \qquad
 \rho_d = 0
  \quad \text{if } d \geq \sqrt{D}.
\end{equation}
For a complete account of the $\Lambda^2$-sieve, see Chapter 7 of~\cite{FI2}. 
Now, $kG'(D) - G(D)$ becomes a quadratic form in the sieve constituents 
$\rho_d$, $1 \leq d < \sqrt{D}$. It would be a complicated process to 
determine the maximum of this quadratic form, however, it suffices to 
have results which are manageable in applications. To this end we follow 
the strategy of minimizing $G(D)$ alone, hoping that the resulting value 
of $G'(D)$ will be pretty good as well. Let us say a few words about how 
we proceeded in~\cite{FI2} with the arithmetical transformations. First, 
by the linear substitution (7.180), the quadratic form $G(D)$ is 
diagonalized to (7.181) while $G'(D)$ is expressed in the new variables 
in (7.186). At this point, we make the special choice (7.187) of the new 
variables $y_d$: 
\[
 y_d
 = \frac{1}{Y(D)} \left( \log \frac{\sqrt{D}}{d} \right)^l,
  \qquad
 1 \leq d < \sqrt{D},
\]
where $l$ is any positive integer at our disposal and $Y(D)$ is the normalizing factor. This choice yields (7.188) and (7.189) with $Y(D)$ given by (7.190). We write
\[
 Y(D)
 = \sum_{m < \sqrt{D}} h(m) \left( \log \frac{\sqrt{D}}{m} \right)^l
\]
where $h(d)$ is the multiplicative function supported on squarefree numbers $d$ coprime with $\Delta$, $h(p) = k / (p-k)$ if $p \nmid \Delta$. Hence, the original sieve constituents become
\[
 \rho_d
 = \mu(d) \frac{d}{\tau_k(d)} \sum_{\substack{b < \sqrt{D} \\ b \equiv 0 \, (d)}} h(b) y_b
 = \frac{\mu(d) d}{Y(D) \tau_k(d)} \sum_{\substack{d < \sqrt{D} \\ b \equiv 0 \, (d)}}
  h(b) \left( \log \frac{\sqrt{D}}{b} \right)^l,  
\]
see (7.182) of~\cite{FI2}. By the arguments of van Lint-Richert (see the bottom lines on p.~92 of~\cite{FI2}) we get
\begin{align*}
 Y(D)
 &= \sum_{a \mid d} \sum_{\substack{m < \sqrt{D} \\ (m,d) = a}} h(m) \left( \log \frac{\sqrt{D}}{m} \right)^l
  = \sum_{a \mid d} h(a) \sum_{\substack{m < \sqrt{D}/a \\ (m,d) = 1}} h(m) \left( \log \frac{\sqrt{D}}{am} \right)^l
\\
 &\geq \left(\sum_{a \mid d} h(a)\right)
   \sum_{\substack{m < \sqrt{D}/d \\ (m,d)=1}} h(m) \left(\log \frac{\sqrt{D}}{dm} \right)^l
  = \mu(d) \rho_d Y(D).
\end{align*}
Hence $|\rho_d|\leq 1$, which, together with~\eqref{e225lambdaSelberg} 
implies~\eqref{e210LambdaBd}.

\begin{rems}
The above choice of $\rho_d$ is not optimal for $G(D)$, however it is used in the original work of GPY because it produces a very good value of the whole quadratic form $k G'(D) - G(D)$. After the GPY work, B. Conrey considered $\mu(d) \rho_d$ as a continuous function of $d$ and applied variational calculus to find the best choice, which turned out to be a combination of two Bessel functions $J_{k-2}$ (in unpublished notes of 2005).
\end{rems}

We have evaluated the sums $G(D),G'(D)$ asymptotically in (7.193), (7.194) 
respectively, again from~\cite{FI2}. Here, we quote these results: 
\begin{align}
 G(D)
 &= \gamma(k,l) \frac{H(\cH)}{\gamma(\cH)} \big(\log \sqrt{D}\big)^{-k} \left\{
   1 + O \left( \frac{1}{\log D} \right)
   \right\},
\\
 G'(D)
 &= \frac{2l+1}{(l+1)(k+2l+1)} G(D) \big\{
  \log D + O(1)
  \big\},
\end{align}
where $\gamma(k,l) = k! \binom{2l}{l} \binom{l+k}{l} / \binom{2l+k}{l}$. Hence, (2.22) becomes
\begin{equation}
 W(D,N)
 = \gamma(k,l) H(\cH) \hat{F}(0) \big\{ w(D,N) + O(1) \big\} N \big(\log \sqrt{D}\big)^{-k}
  + \sum_{h \in \cH} R_h(D,N)
\label{e229Wexpw}
\end{equation}
where
\begin{equation}
 w(D,N)
 = \mu(k,l) \log D - \log N
\end{equation}
with $\mu(k,l) = k (2l+1) / (l+1)(k + 2l + 1)$, which we write as
\begin{equation}
 \mu(k,l)
 = 2 \bigg/ \left( 1 + \frac{1}{2l+1} \right) \left( 1 + \frac{2l+1}{k} \right).
\end{equation}

It is clear that the factor $\mu(k,l)$ is always smaller than $2$, so we need the sieve level $D = N^\theta$ with $\theta > \frac{1}{2}$ to be able to conclude that $w(D,N) \gg \log N$, and hence~\eqref{e27WAsymp}. The Bombieri-Vinogradov theorem can handle our remainders $R_h(D,N)$ only to level $D < N^{\frac{1}{2}}$, so it is not sufficient, yet it barely misses. Indeed, if $l$ is large and $k$ is considerably larger, then $\mu(k,l)$ is close to $2$. A nice choice of $l$ in terms of $k$ is the integer with $\sqrt{k} \leq 2l+1 < \sqrt{k} + 2$, giving
\[
 \mu(k,l)
 \geq 2 \bigg/ \left( 1 + \frac{2}{\sqrt{k}} + \frac{2}{k} \right).
\]

As alluded in the introduction, we shall be able to achieve the sieve level $D = N^\theta$ with an absolute exponent $\theta > \frac{1}{2}$ for the sieve weights $\lambda_d$ supported on numbers having no large prime divisors, say
\begin{equation}
 d \mid P(z),
  \quad
 z = D^{1 / 2s},
  \quad
 s \geq 1.
\label{e232NoLargePrimeDiv}
\end{equation}
Let $G_s(D),G_s'(D)$ denote the corresponding sums~\eqref{e215GDef}, \eqref{e220G'Exp} with the extra restriction~\eqref{e232NoLargePrimeDiv}. There is no extra trouble to evaluate these sums. The extra restriction~\eqref{e232NoLargePrimeDiv} can be automatically secured by assuming (temporarily) in the arithmetical transformations (7.180)-(7.194) of~\cite{FI2}, that $\Delta$ is divisible by every prime in the segment $z \leq p < \sqrt{D}$. Of course, such a $\Delta$ would be huge (temporarily), but it does not matter as long as we are handling algebraic identities without estimating anything. Therefore (7.188) of \cite{FI2} becomes
\begin{equation}
 Y_s^2(D) G_s(D)
 = \sum_{\substack{d < \sqrt{D}\\ \Delta d \mid P(z)}} \frac{\tau_k(d)}{f(d)} \left( \log \frac{\sqrt{D}}{d} \right)^{2l}
\end{equation}
and (7.189) of~\cite{FI2} becomes
\begin{equation}
 Y_s^2(D) G_s'(D)
 =
 \frac{\Delta}{\varphi(\Delta)}
  \sum_{\substack{d < \sqrt{D}\\ \Delta d \mid P(z)}}
   \bigg( \frac{d}{\varphi(d)} \bigg)^2 
   \frac{\tau_{k-1}(d)}{f(d)}
   \Bigg(
    \sum_{\substack{n < \sqrt{D}/d \\ \Delta d n \mid P(z)}}
    \frac{1}{\varphi(n)}
    \bigg( \log \frac{\sqrt{D}}{dn} \bigg)^l
   \Bigg)^2
\label{e234Ys2Gs'}
\end{equation}
with $Y_s(D)$, the normalizing factor, given by
\begin{equation}
 Y_s(D)
 = \sum_{\substack{d < \sqrt{D} \\ \Delta d \mid P(z)}}
  \frac{\tau_k(d)}{f(d)} \left( \log \frac{\sqrt{D}}{d} \right)^l.
\end{equation}
Here $f(d)$ is the multiplicative function with $f(p) = p-k \geq 1$, and $l \geq 0$ is an integer at our disposal. For $z = \sqrt{D}$, that is, for $s = 1$, we have the formula (7.192) of~\cite{FI2};
\begin{equation}
 Y_1(D)
 = Y(D)
 = \frac{l!}{(l+k)!} \frac{\gamma(\cH)}{H(\cH)} \big(\log \sqrt{D}\big)^{k+l} \left\{
  1 + O \left( \frac{1}{\log D} \right)
  \right\}.
\end{equation}

In the next subsection we shall estimate $G_s'(D)$ from below for all 
$s \geq 1$. Of course, $G_s'(D) \leq G_1'(D) = G'(D)$, but it turns out 
that $G_s'(D)$
 loses very little if $s$ is relatively small; precisely we have 
\begin{equation}
 G_s'(D)
 \geq \left(1 - 2s \left(1 - \frac{1}{s} \right)^k \right) G'(D),
  \quad \text{if } s \geq 2.
\label{e237Gs'LB}
\end{equation}
Note that $(1-1/s)^k < e^{-k/s}$. For $G_s(D)$ and $Y_s(D)$ we can use the 
obvious upper bounds $G_s(D) \leq G_1(D) = G(D)$ and 
$Y_s(D) \leq Y_1(D) = Y(D)$. Hence, for the sum~\eqref{e26WeLB} with 
$\lambda_d$ restricted by~\eqref{e232NoLargePrimeDiv}, we 
obtain~\eqref{e229Wexpw} with
\begin{equation}
 w(D,N)
 \geq C_k(\theta,s) \log N
\label{e238wLB}
\end{equation}
provided
\begin{equation}
 C_k(\theta,s)
 = 2 \theta (1 - 2 s e^{-k/s}) \left(
  1 + \frac{2}{\sqrt{k}} + \frac{2}{k}
  \right)^{-1}
  - 1
 > 0.
\label{e29CkPos}
\end{equation}

If $z = D^{1/2s}$ with $s$ sufficiently large, that is, if the support of the sieve $(\lambda_d)$ is smooth enough, we shall achieve a level $D = N^{\theta}$ with $\theta = \theta(s)$ exceeding $\frac{1}{2}$ by an absolute constant. Then, taking $k$ large, it is evident without resorting to numerical computation, that $C_k(\theta,s)$ is positive. We shall be allowed to take (see~\eqref{e335DeltaCond}) $s = 4494$ and $\theta = 105/209$, numbers which are sufficient to show the positivity of $C_k(\theta,s)$ for
\[
 k
 = (419)^2
 = 175561.
\]
Note that for this choice the loss of $2 s (1 - 1/s)^k$ in the lower bound~\eqref{e237Gs'LB}, which is caused by the sieve-weight restriction~\eqref{e232NoLargePrimeDiv}, is numerically minuscule, $2 s e^{-k/s} < 10^{-13}$.

It remains to prove the lower bound~\eqref{e237Gs'LB}, which we 
do in the next subsection.


\subsection{Estimation of $G_s'(D)$}
\label{Gs'(D)}

Recall that $G_s'(D)$ is given by~\eqref{e234Ys2Gs'} with $z = D^{1/2s}$ and 
that $G'(D) = G_1'(D)$. We begin the proof of~\eqref{e237Gs'LB} by 
generalizing Corollary~A.6 of~\cite{FI2} to accommodate the restriction 
$m \mid P(z)$.

Let $\kappa \geq 1$ and $l \geq 0$ be integers and $g(m)$ be a multiplicative function supported on squarefree integers such that
\begin{equation}
 0
 \leq g(p)
 = \frac{\kappa}{p} + O \left(\frac{1}{p^2}\right).
\end{equation}
Denote, for $x>1$, $z\geq 2$, 
\begin{equation}
 S_l(x,z)
 = \sum_{\substack{m \leq x \\ \Delta m \mid P(z)}} g(m) \left( \log \frac{x}{m} \right)^l.
\label{e241SlDef}
\end{equation}
In Corollary~A.6 of~\cite{FI2} we proved that for $x = z \geq 2$ (in which case the summation is restricted only by $(m,\Delta) = 1$)
\begin{equation}	
 S_l(x,x)
 = C(\log x)^{\kappa + l} \big\{ 1 + O ( 1 / \log x ) \big\}
\label{e242Slxx}
\end{equation}
where $C$ is a positive constant which depends on $\Delta,l$ 
and the function $g$. Hence, we derive: 

\begin{lem}
For $x > 1$ and $z \geq 2$ we have
\begin{equation}
 S_l(x,z)
 = h(s) S_l(z,z) \big\{ 1 + O ( 1 / \log z ) \big\},
\label{e243Slxzzz}
\end{equation}
where $s = \log x / \log z$ and $h(s)$ is the continuous solution 
to the differential-dif\-fer\-ence equation
\begin{align}
 h(s)
 &= s^{\kappa + l} ,
  & & \text{if } 0 < s \leq 1 , 
\label{e244DifDifCond}
\\
 sh'(s)
 & = (\kappa + l) h(s) - \kappa h(s-1) ,
  & & \text{if } s > 1.
\label{e245DifDifEq}
\end{align}
\end{lem}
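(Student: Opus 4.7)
The plan is to prove the lemma in two stages: first handle $s \leq 1$ directly, then use induction on $\lfloor s \rfloor$ together with a Buchstab-type peeling of the largest prime factor to obtain a recurrence that, after passing to the continuous limit, becomes the asserted differential-difference equation.

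For $s \leq 1$, that is $x \leq z$, the summation range $m \leq x$ already forces all prime factors of $m$ to be less than $z$, so the constraint $\Delta m \mid P(z)$ reduces to $(m,\Delta)=1$. Hence $S_l(x,z) = S_l(x,x)$, and two applications of~\eqref{e242Slxx} (at $x$ and at $z$) give
\[
 S_l(x,z) \;=\; s^{\kappa+l}\, S_l(z,z)\,\{1 + O(1/\log z)\},
\]
which matches~\eqref{e243Slxzzz} with $h(s)=s^{\kappa+l}$ as in~\eqref{e244DifDifCond}.

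For $s>1$, I peel off the largest prime factor. Since $g$ is supported on squarefrees, every $m\leq x$ with $(m,\Delta)=1$ and $\Delta m\nmid P(z)$ factors uniquely as $m=np$ where $p\in [z,x)$ is the largest prime divisor and $n\mid P(p)$ (the squarefreeness forces $p\nmid n$). Expanding $(\log x/m)^l$ is unnecessary because $\log(x/m)=\log((x/p)/n)$, so
\[
 S_l(x,z) \;=\; S_l(x,x) \;-\; \sum_{z\leq p<x} g(p)\, S_l(x/p,\,p).
\]
By induction (on $\lfloor s\rfloor$), each inner sum obeys~\eqref{e243Slxzzz} since $\log(x/p)/\log p = \log x/\log p - 1 \leq s-1$. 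Inserting $S_l(p,p)=C(\log p)^{\kappa+l}\{1+O(1/\log p)\}$, using $g(p)=\kappa/p+O(1/p^2)$, and converting the prime sum to an integral via the prime number theorem (the substitution $u=\log p/\log z$ gives $dp/(p\log p)=du/u$), I obtain
\[
 S_l(x,z)
 \;=\; C(\log z)^{\kappa+l}\left\{s^{\kappa+l} \;-\; \kappa\!\int_1^{s} h\!\left(\tfrac{s}{u}-1\right) u^{\kappa+l-1}\,du\right\}\bigl\{1+O(1/\log z)\bigr\}.
\]
Matching this with~\eqref{e243Slxzzz} evaluated via~\eqref{e242Slxx} forces $h$ to satisfy the bracketed integral equation. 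The change of variables $v=s/u-1$ rewrites the integral as $s^{\kappa+l}\!\int_0^{s-1} h(v)(v+1)^{-\kappa-l-1}\,dv$, so
\[
 h(s) \;=\; s^{\kappa+l}\left(1 - \kappa\!\int_0^{s-1}\frac{h(v)}{(v+1)^{\kappa+l+1}}\,dv\right),
\]
and differentiating this identity in $s$ yields $sh'(s)=(\kappa+l)h(s)-\kappa h(s-1)$, which is~\eqref{e245DifDifEq}. Continuity at $s=1$ is automatic and agrees with~\eqref{e244DifDifCond}, which determines $h$ uniquely on each interval $(n,n+1]$ as the solution of a first-order linear ODE with the known datum $h$ on $(n-1,n]$.

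The principal technical obstacle is propagating the error term through the induction uniformly in $s$: the $O(1/\log p)$ correction coming from the inductive hypothesis, together with the error in Mertens/PNT used to approximate the prime sum by the integral, must collapse to $O(1/\log z)$ rather than growing with the number of inductive steps. This is handled by noting that the prime sum approximation
\[
 \sum_{z\leq p<x} \frac{f(p)}{p} \;=\; \int_z^x f(t)\,\frac{dt}{t\log t} \;+\; O\!\bigl((\log z)^{-B}\bigr)
\]
for any fixed $B\geq 1$ contributes an error dominated by $(\log z)^{-B}$, while the $O(1/p^2)$ remainder in $g(p)$ sums to $O(1/z)$; both are absorbed into the overall $\{1+O(1/\log z)\}$ factor. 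Since $s$ lies in a fixed bounded range (the smoothness exponent $s=\log x/\log z$ is at most some absolute constant in applications), the induction runs finitely many steps and no accumulation problem arises.
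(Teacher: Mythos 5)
Your proof is correct, and it takes a genuinely different route from the paper's. The paper's proof leans on the $l = 0$ case: it cites Theorem~A.8 of~\cite{FI2}, which already gives $S_0(x,z) = h_0(s)\,S_0(z,z)\{1 + O(1/\log z)\}$, and then lifts to general $l$ by the partial summation identity $S_l(x,z) = -\int_1^x S_0(y,z)\,\mathrm{d}(\log x/y)^l$. After the change of variables $y = z^t$ the kernel $I(s) = -\int_0^s h_0(t)\,\mathrm{d}(s-t)^l$ appears, and the paper verifies by differentiation that $I(s)$ satisfies the same delay-differential equation~\eqref{e245DifDifEq} and the correct initial data, so $h(s) = I(s)/I(1)$. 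Your argument, by contrast, does not pass through the $l=0$ case at all: you isolate the largest prime factor $p \geq z$ to obtain the Buchstab identity
\[
 S_l(x,x) - S_l(x,z) \;=\; \sum_{z \leq p < x} g(p)\, S_l(x/p,\,p),
\]
induct on $\lfloor s \rfloor$ (noting $\log(x/p)/\log p \leq s-1$), and, after feeding in~\eqref{e242Slxx} and converting the prime sum to an integral, arrive at the integral equation $h(s) = s^{\kappa+l}\bigl(1 - \kappa\int_0^{s-1} h(v)(v+1)^{-\kappa-l-1}\,\mathrm{d}v\bigr)$, whose differentiated form is exactly~\eqref{e245DifDifEq}. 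What each approach buys: the paper's partial summation device is shorter and reuses existing machinery ($l=0$ from the book), and it makes the role of the convolution $h_0 * (s-t)^l$ transparent; your Buchstab derivation is more self-contained (it relies only on Corollary~A.6, not Theorem~A.8), it exposes the combinatorial source of the delay term $\kappa h(s-1)$ directly, and it proves the $l=0$ case en route. Your caveat about error propagation through the induction is the right thing to flag; as you say, it is harmless because $h$ is bounded on compacta, $1/\log p \leq 1/\log z$ for $p \geq z$, the $O(1/p^2)$ tail of $g(p)$ sums to $O(1/z)$, and $s$ is confined to a fixed bounded range in all applications (the same implicit uniformity hypothesis is present in the paper's appeal to Theorem~A.8).
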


\begin{proof}
For $l = 0$ we get~\eqref{e243Slxzzz} by Theorem~A.8 of~\cite{FI2}, which we write in the form
\[
 S_0(x,z)
 = h_0(s) S_0(z,z) \big\{
  1 + O(1/\log z)
  \big\},
\]
where $h_0(s)$ is the solution of~\eqref{e244DifDifCond}-\eqref{e245DifDifEq} with $l = 0$. We derive the general case $l \geq 1$ by partial summation as follows;
\begin{align*}
 S_l(x,z)
 &= \int_1^x \left(\log \frac{x}{y}\right)^l \dint S_0(y,z)
  = - \int_1^x S_0(y,z) \dint \left( \log \frac{x}{y} \right)^l
\\
 &= - \int_1^x \left\{
   h_0 \left( \frac{\log y}{\log z} \right)
   + O\left(\frac{1}{\log z}\right)
   \right\}
   \dint \left( \log \frac{x}{y}\right)^l S_0(z).
\end{align*}
Changing the variable $y = z^t = x^{t/s}$ we get
\begin{equation}
 S_l(x,z)
 = I(s) S_0(z,z) (\log z)^l \big\{ 1 + O (1/\log z) \big\},
\label{e246SlByI}
\end{equation}
where
\begin{equation}
 I(s)
 = - \int_0^s h_0(t) \dint (s-t)^l.
\label{e247Idef}
\end{equation}
For $0 < s \leq 1$ this gives
\begin{equation}
 I(s)
 = l \int_0^s t^\kappa (s-t)^{l-1} \dint t
 = \frac{\kappa! l!}{(\kappa + l)!} s^{\kappa+l}
 = I(1) h(s).
\end{equation}
For $s > 1$ we write the integral~\eqref{e247Idef} in the form
\[
 s I(s)
 = l \int_0^s h_0(t) (s-t)^l \dint t
  + l \int_0^s h_0(t) t (s-t)^{l-1} \dint t.
\]
Differentiating, we derive (assume $l > 1$) 
\[
 \big(sI(s)\big)'
 = l I(s)
  - l \int_0^s h_0(t) t \dint (s-t)^{l-1}
 = l I(s)
  + l \int_0^s \big( h_0(t) t \big)' (s-t)^{l-1} \dint t.
\]
Hence, 
\begin{align*}
 \big(sI(s)\big)'
 &= l I(s)
   + l \int_1^s \big( (\kappa +1) h_0(t) - \kappa h_0(t-1) \big) (s-t)^{l-1} \dint t
+ l \int_0^1 \big(h_0(t) t \big)' (s-t)^{l-1} \dint t
\\
 &= (\kappa + l + 1) I(s) - \kappa I(s-1).
\end{align*}
Thus, $I(s)$ satisfies the differential-difference equation~\eqref{e245DifDifEq}. (For $l = 1$ the arguments are similar.) Then, \eqref{e246SlByI} with $x = z \geq 2$ yields
\[
 S_l(z,z)
 = I(1) S_0(z,z)(\log z)^l \big\{ 1 + O (1/\log z) \big\}.
\]
Combining this with~\eqref{e246SlByI} we obtain~\eqref{e243Slxzzz}.
\end{proof}

Note that~\eqref{e243Slxzzz} can be written in the following form (use~\eqref{e242Slxx})
\begin{equation}
 S_l(x,z)
 = H(s) S_l(x,x) \big\{ 1 + O(1/\log x) \big\},
  \quad \text{if } x \geq 2,
\label{e249SlxzBySlxx}
\end{equation}
where $H(s) = s^{-\kappa-l} h(s)$, so $H(s)$ is the continuous solution of
\begin{align}
 H(s)
 &= 1 , 
 & & \text{if } 0 < s \leq 1 , 
\label{e250DifDefHBdry}
\\
 s^{\kappa + l + 1} H'(s)
 &= - \kappa (s-1)^{\kappa + l} H(s-1),
 & & \text{if } s > 1.
\label{e251DifDifH}
\end{align}
Obviously, $H(s)$ is positive and decreasing, so $H(s) \leq 1$.

\begin{lem}
For $s > 1$ we have
\begin{equation}
 H(s)
 > 1 - \frac{\kappa s}{\kappa + l + 1} \left(1 - \frac{1}{s}\right)^{\kappa + l + 1}.
\label{e252HLowBd}
\end{equation}
\end{lem}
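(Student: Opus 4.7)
The plan is to integrate the differential-difference equation \eqref{e251DifDifH} from $1$ to $s$, use the already noted monotonicity of $H$ to replace $H(t-1)$ by its majorant $1$, and then evaluate the resulting one-variable integral by spotting an exact antiderivative. Dividing \eqref{e251DifDifH} by $t^{\kappa+l+1}$, integrating over $[1,s]$, and invoking $H(1)=1$ from \eqref{e250DifDefHBdry} gives the integral identity
\[
 H(s) = 1 - \kappa \int_1^s \frac{(t-1)^{\kappa+l}}{t^{\kappa+l+1}}\,H(t-1)\dint t.
\]

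Since $H$ is positive and decreasing with $H(u)=1$ for $0<u\leq 1$, we have $H(t-1)\leq 1$ throughout $[1,s]$, so
\[
 H(s) \geq 1 - \kappa \int_1^s \frac{(t-1)^{\kappa+l}}{t^{\kappa+l+1}}\dint t.
\]
For this remaining integral I would exploit the closed-form identity
\[
 \frac{d}{dt}\left(1-\frac{1}{t}\right)^{\kappa+l+1} = (\kappa+l+1)\,\frac{(t-1)^{\kappa+l}}{t^{\kappa+l+2}},
\]
pulling out one factor of $t$ from the denominator by writing $(t-1)^{\kappa+l}/t^{\kappa+l+1} = t\cdot(t-1)^{\kappa+l}/t^{\kappa+l+2}$, and bounding the extra factor $t$ by $s$ throughout $[1,s]$. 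This produces
\[
 \int_1^s \frac{(t-1)^{\kappa+l}}{t^{\kappa+l+1}}\dint t \;\leq\; \frac{s}{\kappa+l+1}\left(1-\frac{1}{s}\right)^{\kappa+l+1},
\]
and substitution yields \eqref{e252HLowBd}. Strictness is automatic because the bound $t\leq s$ is strict on $[1,s)$, where the integrand $(t-1)^{\kappa+l}/t^{\kappa+l+2}$ is strictly positive; hence the passage from $t$ to $s$ inside the integral loses a strictly positive amount.

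There is really no genuine obstacle here. The only observation to single out is the bookkeeping step of peeling off one factor of $t$ so as to align the integrand with the derivative of $(1-1/t)^{\kappa+l+1}$; after that, the estimate is essentially forced on us. The bound is somewhat wasteful for large $s$ (we discarded the actual decay of $H(t-1)$ for $t>2$), but nothing is lost in the intended application \eqref{e237Gs'LB}, where $(1-1/s)^{k}$ is microscopic.
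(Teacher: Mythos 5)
Your proof is correct and follows essentially the same route as the paper: both integrate the differential-difference equation from $1$ to $s$, replace $H(t-1)$ by its majorant $1$, pull one factor $t\leq s$ out of the integrand, and then recognize the remaining integrand as an exact derivative of $\bigl(1-\tfrac{1}{t}\bigr)^{\kappa+l+1}$. The only difference is cosmetic (you write $(t-1)^{\kappa+l}/t^{\kappa+l+1}$ where the paper writes $(1-1/t)^{\kappa+l}t^{-1}$), and you make the strictness explicit where the paper leaves it implicit.
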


\begin{proof}
By~\eqref{e251DifDifH} we get
\begin{align*}
 1 - H(s)
 &= \kappa \int_1^s \left(1 - \frac{1}{t}\right)^{\kappa + l} t^{-1} H(t-1) \dint t
  \leq \kappa \int_1^s \left(1 - \frac{1}{t}\right)^{\kappa + l} t^{-1} \dint t
\\
 &\leq \kappa s \int_1^s \left(1 - \frac{1}{t}\right)^{\kappa + l} t^{-2} \dint t
  = \frac{\kappa s}{\kappa + l + 1} \left( 1 - \frac{1}{s} \right)^{\kappa + l + 1}.
\qedhere
\end{align*}
\end{proof}

\begin{rems}
One can show that $H(s)$ satisfies the integral equation
\begin{equation}
 s^{\kappa + l + 1} q(s) H(s)
 = c + \kappa \int_{s-1}^s x^{\kappa + l} q(x+1) H(x) \dint x
\label{e253HintEq}
\end{equation}
for $s \geq 1$, where $c = e^{\kappa \gamma}(\kappa + l)! / l!$, and
\begin{equation}
 q(s)
 = \frac{1}{l!} \int_0^\infty \exp \left(
  -s z + \kappa \int_0^z (1 - e^{-u}) u^{-1} \dint u
  \right)
  z^l \dint z.
\end{equation}
This is the solution of the equation
\begin{equation}
 \big( s^{\kappa + l + 1} q(s) \big)'
 = \kappa s^{\kappa + l} q(s+1),
  \qquad
 s > 0,
\end{equation}
which is adjoint to the equation~\eqref{e251DifDifH},
 see Appendix B of~\cite{FI2}.  
We have, by (B.13) and (B.12) of~\cite{FI2})
\begin{equation}
\begin{aligned}
 q(s)
 &\sim c s^{-\kappa - l - 1},
 & & \text{as } s \to 0,
\\
 q(s)
 &\sim s^{-l -1},
 & & \text{as } s \to \infty.
\end{aligned}
\end{equation}
Hence~\eqref{e253HintEq} shows that
\begin{equation}
 H(s)
 \sim c s^{-\kappa},
  \qquad
 \text{as } s \to \infty.
\end{equation}
Our neat lower bound~\eqref{e252HLowBd} can be improved significantly if $s > \kappa + l$ by using~\eqref{e253HintEq}, but we do not need anything in this range.
\end{rems}

Now we are ready to prove~\eqref{e237Gs'LB} for any $s \geq 2$. By~\eqref{e241SlDef}, \eqref{e242Slxx}, and~\eqref{e249SlxzBySlxx}, the innermost sum in~\eqref{e234Ys2Gs'} is equal to
\[
 C H\left( \frac{\log \sqrt{D}/d}{\log z} \right)
 \left( \log \frac{\sqrt{D}}{d}\right)^l \left(
  \log \frac{\sqrt{D}}{d} + O(1)
  \right)
\]
where $H(s)$ is the function defined in \eqref{e250DifDefHBdry} and \eqref{e251DifDifH} for $\kappa = 1$ (in the range $\sqrt{D}/2 < d < \sqrt{D}$, the claim is trivial). Hence $ Y_s^2(D) G_s'(D)$ in~\eqref{e234Ys2Gs'} is equal to 
\[
 \frac{C^2 \Delta}{\varphi(\Delta)}
 \sum_{\substack{d < \sqrt{D} \\ \Delta d \mid P(z)}}
 \bigg( \frac{d}{\varphi(d)}\bigg)^2
 \frac{\tau_{k-1}(d)}{f(d)}
 H^2 \bigg( \frac{\log \sqrt{D}/d}{\log z}\bigg)
\bigg( \log \frac{\sqrt{D}}{d} \bigg)^{2l}
 \bigg( \log \frac{\sqrt{D}}{d} + O(1) \bigg)^2.
\]
All we need to know about $H(s)$ is~\eqref{e250DifDefHBdry} and the lower bound
\[
 H^2(s)
 > 1 - \frac{2s}{l + 2} \left(1 - \frac{1}{s}\right)^{l+2}
  \qquad \text{if } s > 1,
\]
which follows by~\eqref{e252HLowBd}. Hence, the above 
formula yields the following lower bound:
\begin{equation}
 Y_s^2(D) G_s'(D)
 \geq \frac{C^2 \Delta}{\varphi(\Delta)}
  \sum_{\substack{d < \sqrt{D} \\ \Delta d \mid P(z)}}
  \left( \frac{d}{\varphi(d)} \right)^2 \frac{\tau_{k-1}(d)}{f(d)} \{ \cdots \}
   + O \big( (\log D)^{k + 2l} \big)
\label{eYsGd'LB}
\end{equation}
where the content of $\{ \cdots \}$ is equal to
\[
 \bigg( \log \frac{\sqrt{D}}{d}\bigg)^{2l +2}
 - \,\,\,\, \frac{2}{l+2} (\log z)^{-1} \bigg( \log \frac{\sqrt{D}}{d} \bigg)^{l+1} 
\bigg( \log \frac{\sqrt{D}}{zd}\bigg)^{l+2}
\]
with the negative term being present only for $d < \sqrt{D}/z$. In this negative term we use $\log(\sqrt{D}/d) \leq (1 - 1/s)^{-1} \log \sqrt{D}$, obtaining
\[
 \{ \cdots \}
 \geq \bigg( \log \frac{\sqrt{D}}{d} \bigg)^{2l+2}
  - \,\,\,\, \frac{2s}{l+2} \bigg(1 - \frac{1}{s}\bigg)^{-l-1} \big( \log \sqrt{D}\big)^l \bigg( \log \frac{\sqrt{D}}{zd}\bigg)^{l+2}
\]
with the negative term being present only for $d < \sqrt{D}/z$. Inserting 
this into~\eqref{eYsGd'LB} we get two sums, over $d < \sqrt{D}$ in the 
positive terms and over $d < \sqrt{D} / z$ in the negative terms. The 
positive sum is of type~\eqref{e241SlDef} with $\kappa = k - 1$ and $l$ 
replaced by $2l + 2$. The negative sum is also of type~\eqref{e241SlDef} 
with the same $\kappa = k-1$, but with $l$ replaced by $l+2$. Let $K(s)$ 
and $L(s)$ denote the corresponding functions defined 
in~\eqref{e250DifDefHBdry} and~\eqref{e251DifDifH}. 
Applying~\eqref{e249SlxzBySlxx} and comparing the resulting lower 
bound for $Y_s^2(D) G_s'(D)$ with the asymptotic value of $Y_1^2(D) G_1'(D)$ 
we conclude that
\[
 G_s'(D)
 > \chi(s) G'(D) \big\{
   1 + O ( 1 / \log D )
   \big\},
\]
where
\[
 \chi(s)
 = K(s)
  - \frac{2s}{l+2} \left( 1 - \frac{1}{s}\right)^{-l-1} \left( 1 - \frac{1}{s}\right)^{k+l+1} L(s)
\]
Here, the factor $(1 - 1/s)^{k+l+1}$ in front of $L(s)$ comes by way 
of re-scaling:
\[
 Y_1^2(D/z^2) G_1'(D/z^2)
 = Y_1^2(D) G_1'(D)
 \left( \frac{\log \sqrt{D} / z}{\log \sqrt{D}} \right)^{k + l + 1}
  \left\{ 1 + O \left(\frac{1}{\log D}\right)\right\}.
\]
Finally, by the lower bound~\eqref{e252HLowBd} for $K(s)$ and the upper bound $L(s) \leq 1$, we find that
\[
 \chi(s)
 > 1 - \frac{(k-1)s}{k + 2l + 2} \left( 1 - \frac{1}{s} \right)^{k + 2l + 2}
  -\,\, \frac{2s}{l+2} \left(1 - \frac{1}{s}\right)^k
 > 1 - 2s \left( 1 - \frac{1}{s} \right)^k.
\]
This completes the proof of~\eqref{e237Gs'LB} ($D$ is assumed to be large).

\section{\bf{Proof of the Main Theorem}}
\label{c3proofMainThm}

\subsection{Sums over primes}

The most common technique for handling sums over primes uses combinatorial formulas of sieve exclusion-inclusion type; see Chapters 17,18 of~\cite{FI2}, where many variations on this theme are illustrated with applications. In this paper we develop a smooth version of Heath-Brown's formula.

Let $\psi(u)$ be a smooth function supported on $|u| \leq 2$ with
\begin{equation}
 \psi(u) = 1
  \quad \text{if } |u| \leq 1.
\end{equation}
Let $K \geq 1$ and $M \geq 1$. For any integer $n$ with $1 \leq n \leq 2M^K$ we have
\begin{multline}
 \Lambda(n)
 = -\sum_{1 \leq J \leq K} (-1)^J \binom{K}{J}
  \mathop{\sum \cdots \sum}_{l_1 \cdots l_J m_1 \cdots m_J = n}
  \mu(m_1) \cdots \mu(m_J)
\\
  \cdot \psi\left(\frac{m_1}{M}\right) \cdots \psi\left(\frac{m_J}{M}\right) \log l_1.
\label{e32LambdaByMultPsi}
\end{multline}
This can be seen from the following lines:
\begin{multline*}
 \frac{\zeta'}{\zeta}(s)
 \left( 1 - \zeta(s) \sum_m \mu(m) \psi\left(\frac{m}{M}\right) m^{-s} \right)^K
 = \sum_{n > 2M^K} a_n n^{-s}
\\
 = \frac{\zeta'}{\zeta}(s)
  + \sum_{1 \leq J \leq K} (-1)^J \binom{K}{J} \zeta'(s) \zeta(s)^{J-1} \left(
   \sum_m \mu(m) \psi\left(\frac{m}{M}\right) m^{-s}
   \right)^J.
\end{multline*}

We shall sum arithmetic functions over primes with smooth weights. For technical convenience the sum is often split into dyadic boxes. Here is how one can perform such a split in a general context without compromising the smoothness. Suppose $F = \phi * \cdots * \phi * \Phi$ is given as the multiplicative convolution:
\begin{equation}
 F(x)
 = \idotsint \phi(t_1) \cdots \phi(t_r) \Phi(t_1 \cdots t_r / x) \dint^* t,
\label{e33ConvDef}
\end{equation}
where $\dint^* t = (t_1 \cdots t_r)^{-1} \dint t_1 \cdots \dint t_r$. Choose 
$\phi(y)$ and $\Phi(y)$, smooth and supported on $1 \leq y \leq 2$, with
\begin{equation}
 \int \phi(y) y^{-1} \dint y
 = 1.
\label{e34PhiNormInt1}
\end{equation}
Let $N \geq 1$. Note that if $1 \leq n \leq 2^r N$, then
\[
 \int_{1/2}^{2^r N} \phi\left( \frac{n}{y} \right) \frac{\dint y}{y}
 = \int \phi(y) y^{-1} \dint y
 = 1.
\]
Hence, for any numbers $n_1,\dots,n_s \geq 1$, $1 \leq s \leq r$, we have
\begin{multline}
 F\left( \frac{n_1\cdots n_s}{N}\right)
 = \idotsint \int_{1/2}^{2^r N} \cdots \int_{1/2}^{2^r N}
  \phi\left( t_1 \frac{n_1}{y_1} \right)
  \phi\left( \frac{n_1}{y_2}\right) \cdots \phi\left(t_s \frac{n_s}{y_s}\right) \phi \left( \frac{n_s}{y_s}\right)
\\
 \cdot \phi(t_{s+1})\cdots \phi(t_r) \Phi\left( \frac{t_1 \cdots t_r}{y_1 \cdots y_s} N \right) \dint^* t \dint^* y.
\label{e35FMultConv}
\end{multline}
This integral representation provides simultaneously a separation of the 
variables $n_1$, \dots, $n_r$ and their partition into smooth, dyadic-type 
boxes. Indeed, the integration variables satisfy
\begin{align}
 \frac{1}{2} 
 &< t_j < 2\ , \quad\quad\quad {\rm for}\quad 
1 \leq j \leq r,
\label{e34IntDyBoxtj}
\\
 \frac{1}{2}
 &< y_j < 2^r N\ , \quad\quad {\rm for}\quad 
1 \leq j \leq s,  
\\
 2^{-r-1} N
 &< y_1 \cdots y_s < 2^r N,
\label{e36IntDyBoxProdY}
\end{align}
while the projected summation variables $n_1,\dots,n_s$ are well located in the segments
\begin{equation}
 y_j < n_j < 2y_j,
  \qquad 1 \leq j \leq s.
\end{equation}
Note that the measure of the set~\eqref{e34IntDyBoxtj}-\eqref{e36IntDyBoxProdY} is bounded by
\begin{equation}
 (\log 4)^r ( \log 2^{r+1} N)^{s-1} \log 2^{2r+1}
 \ll  \left( 3r/2 \right)^r (\log 2N)^{s-1}.
\end{equation}
Of course, one can take the convolution $F = \phi_1 * \cdots * \phi_r * \Phi$ with a different test function for every variable, however, our choice $\phi_1 = \cdots = \phi_r = \phi$ does the job fine.

We shall use the integral~\eqref{e35FMultConv} with $r = 2K$ and $s = 2J$, 
so for reference we denote the domain of 
integration~\eqref{e34IntDyBoxtj}-\eqref{e36IntDyBoxProdY} 
by $\cM_{JK}(N)$. The measure of $\cM_{JK}(N)$ (with respect to the 
multiplicative group) satisfies
\begin{equation}
 | \cM_{JK}(N)|
 \leq 2 (3K)^{2K} (\log 2N)^{2J-1}.
\end{equation}

Applying~\eqref{e35FMultConv} to~\eqref{e32LambdaByMultPsi} we reach  
the following ``decomposition'' formula for smoothly cropped prime 
powers.

\begin{prop}
Let $F = \phi * \cdots * \phi * \Phi$ be the multiplicative convolution 
of $2K$ copies of a smooth function $\phi(y)$ supported in $1 \leq y \leq 2$ 
and normalized by~\eqref{e34PhiNormInt1}, and $\Phi(y)$ supported in 
$1 \leq y \leq 2$. Let
\begin{equation}
 1 \leq N \leq 2(M/4)^K.
\end{equation}
Then, for any $n \geq 1$ we have
\begin{multline}
 \Lambda(n) F\left(\frac{n}{N}\right)
 =
 - \sum_{1 \leq J \leq K} (-1)^J \binom{K}{J}
  \int_{\cM_{JK}(N)} \bigg(
   \prod_{2 J < j \leq 2K} \phi(t_j)
   \bigg)
  \Phi\left( \frac{t_1 \cdots t_{2K}}{y_1 \cdots y_{2J}} N \right)
\\
  A_n(t_1,\dots,t_{2J}; y_1,\dots,y_{2J}) \frac{\dint t_1 \cdots \dint t_{2K}}
{t_1 \cdots t_{2K}}\frac{\dint y_1 \cdots \dint y_{2J}}{y_1 \cdots y_{2J}}
\label{e314LambdaDecompConv}
\end{multline}
where $A_n(\,;)$ is given by the Dirichlet convolution of $2J$ factors
\begin{multline}
 A_n(\,;)
 = \mathop{\sum \cdots \sum}_{l_1\cdots l_J m_1 \cdots m_J = n}
  \mu(m_1) \cdots \mu(m_J) \psi\left(\frac{m_1}{M}\right) 
\cdots \psi\left(\frac{m_J}{M}\right)
\\
 (\log l_1) \prod_{1 \leq j \leq J}
  \phi\left(\frac{l_j}{y_j}\right)
  \phi\left(\frac{l_j}{y_j} t_j\right)
  \phi\left(\frac{m_j}{y_{J+j}}\right)
  \phi\left(\frac{m_j}{y_{J+j}} t_{J+j}\right).
\label{eseqA}
\end{multline}
\end{prop}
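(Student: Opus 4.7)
The plan is to derive this formula by substituting two identities that have already been established in the preceding discussion: the smoothed Heath-Brown decomposition \eqref{e32LambdaByMultPsi} for $\Lambda(n)$ and the multiplicative-convolution partition \eqref{e35FMultConv} for $F(n/N)$. First I would apply \eqref{e32LambdaByMultPsi} to write
\[
 \Lambda(n) F\!\left(\tfrac{n}{N}\right)
 = -\sum_{1\le J\le K}(-1)^J\binom{K}{J}\!\!\sum_{l_1\cdots l_J m_1\cdots m_J = n}\!\!\!\!\mu(m_1)\cdots\mu(m_J)\psi\!\left(\tfrac{m_1}{M}\right)\!\cdots\!\psi\!\left(\tfrac{m_J}{M}\right)(\log l_1)\,F\!\left(\tfrac{n}{N}\right),
\]
and then, inside each inner sum, replace $F(n/N)$ by its integral representation \eqref{e35FMultConv} chosen with $r=2K$ and $s=2J$, using as the projected variables $(n_1,\dots,n_{2J}) = (l_1,\dots,l_J,m_1,\dots,m_J)$, so that $n_1\cdots n_s = n$.

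Before substituting, I would confirm that the ranges match. For \eqref{e35FMultConv} to give the identity $F(n_1\cdots n_s/N)$, it is enough that each $n_j\ge 1$ and that $1\le n \le 2^{2K}N$, because only then does the normalization \eqref{e34PhiNormInt1} make each individual $\int\phi(n_j/y_j)\,dy_j/y_j$ equal to one after the dummy variable change. Under the hypothesis $N\le 2(M/4)^K$ one has $2^{2K}N\le 2M^K$, so any $n$ contributing non-trivially to $F(n/N)$ automatically falls in the range $1\le n\le 2M^K$ on which \eqref{e32LambdaByMultPsi} is valid; and the support of $\Phi$ inside \eqref{e35FMultConv} already forces $n\ge N/2\ge 1$. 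Thus the two identities can be legitimately composed.

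After carrying out the substitution, I would reorganize the resulting expression: pull the outer integral over $(t,y)\in \cM_{JK}(N)$ to the front, collect the factors $\phi(t_j)$ with $2J<j\le 2K$ and the factor $\Phi(t_1\cdots t_{2K} N/(y_1\cdots y_{2J}))$ outside of the Dirichlet convolution (they do not depend on the summation variables $l_j,m_j$), and leave inside only the $4J$ factors $\phi(l_j/y_j)\phi(l_j t_j/y_j)\phi(m_j/y_{J+j})\phi(m_j t_{J+j}/y_{J+j})$ together with the Möbius, $\psi$ and $\log l_1$ weights. This is by construction the definition \eqref{eseqA} of $A_n(\,;\,)$, and so the whole expression takes the form \eqref{e314LambdaDecompConv} as claimed.

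The work here is bookkeeping rather than analysis: no estimate needs to be proved, only the verification that the dyadic-smooth partition \eqref{e35FMultConv} can be applied to each of the $2J$ variables $l_1,\dots,l_J,m_1,\dots,m_J$ while the remaining $2(K-J)$ dummy directions in $F$ are absorbed into the bare factors $\phi(t_j)$, $2J<j\le 2K$, that appear in front of $A_n$. The main (modest) obstacle is purely notational, namely ensuring that the indexing convention used in \eqref{e35FMultConv} — pairing the $j$-th summation variable with the pair $(t_j,y_j)$ and double $\phi$-factor — is threaded through consistently so that the first $J$ blocks match the $l$-variables and the next $J$ blocks match the $m$-variables, and that the range condition $N\le 2(M/4)^K$ is exactly what is needed to reconcile the two support restrictions.
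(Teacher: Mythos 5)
Your proposal is correct and matches the paper's (very terse) derivation exactly: the paper simply states that the proposition is obtained by ``applying~\eqref{e35FMultConv} to~\eqref{e32LambdaByMultPsi},'' which is precisely the composition you carry out, with the same choice $r=2K$, $s=2J$ and the projected variables $(n_1,\dots,n_{2J})=(l_1,\dots,l_J,m_1,\dots,m_J)$. Your range check that $N\le 2(M/4)^K$ guarantees the support of $F(\cdot/N)$ lies inside $[1,2M^K]$ where~\eqref{e32LambdaByMultPsi} is valid is exactly the bookkeeping the paper leaves implicit.
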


\begin{rems}
The function $F(x)$ is supported in the segment 
$\frac{1}{2} \leq x \leq 2^{2K}$, therefore $\Lambda(n) F(n/N)$ captures 
the prime powers inside $\frac{N}{2} < n < 2^{2K} N$ with weights smoothly 
vanishing at the endpoints.

The Fourier transform $\hat{F}(s)$ at $s=0$ will appear as a factor in the 
main terms. For $F = \phi * \cdots * \phi * \Phi$ given 
by~\eqref{e33ConvDef} we have
\[
 \hat{F}(0)
 = \left( \int \phi(y) \dint y \right)^r \int \Phi(y) y^{-2} \dint y.
\]
Given $\phi$ and $r \geq 1$ we shall take $\Phi$ to be normalized  so that
\begin{equation}
 \hat{F}(0)
 = 1.
\end{equation}
\end{rems}

\subsection{Combinatorial arrangements}

For clarity of exposition, let us arrange the 
ordinates of the points in the domain of 
integration $\cM_{JK}(N)$ 
(see~\eqref{e34IntDyBoxtj}-\eqref{e36IntDyBoxProdY})  
into the decreasing sequence 
$\{y_1,y_2,\dots,y_{2J}\}
=\{Y_1,Y_2,\dots,Y_{2J}\}$, say, with 
\begin{equation}
 Y_1 \geq Y_2 \geq \cdots \geq Y_{2J}.
\label{e317Ydec}
\end{equation}
By~\eqref{e36IntDyBoxProdY} we get
\begin{equation}
 Y_1 Y_2 \cdots Y_{2J} \asymp N.
\label{e318Yprod}
\end{equation}

\begin{lem} \label{l32Yprods}
Let $0 < \eta < 1/40$. Then, at least one of the following three cases hold:
\begin{enumerate}[label={\normalfont{(C\arabic*)}}]
\item\label{C1} we have
\[
 Y_1 \gg N^{\frac{5}{8} - \eta} , 
\]
\item\label{C2} there is a subproduct of~\eqref{e318Yprod}, say $X$, with
\[
 N^{\frac{3}{8} + \eta}
 \ll X
 \ll N^{\frac{5}{8} - \eta},
\]
\item\label{C3} we have
\[
 Y_1
 \geq Y_2
 \geq Y_3
 \gg N^{\frac{1}{4} - \eta}
\quad \text{and} \quad
 Y_1 Y_2 Y_3^{\frac{5}{4}}
 \gg N^{\frac{65}{64} - \frac{13}{8} \eta}.
\]
\end{enumerate}
The implied constants in~\ref{C1}, \ref{C2}, \ref{C3} depend only on the implied constant in~\eqref{e318Yprod}.
\end{lem}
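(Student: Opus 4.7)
The plan is to argue by contrapositive: assume (C1) and (C2) both fail, and derive (C3). Set $\alpha_i = \log Y_i/\log N$, so that $\alpha_1 \geq \cdots \geq \alpha_{2J}$ and, by \eqref{e318Yprod}, $\sum_i \alpha_i = 1 + o(1)$. Failure of (C1) means $\alpha_1 < 5/8 - \eta$, and failure of (C2) means that no subsum $\sum_{i \in I}\alpha_i$ lies in the forbidden interval $[3/8+\eta,\,5/8-\eta]$. In particular each single $\alpha_i$ is itself a subsum and satisfies $\alpha_i \leq \alpha_1 < 5/8 - \eta$, which forces $\alpha_i < 3/8 + \eta$ for all $i$.

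The first step is a partial-sum jump argument. Let $r$ be the smallest index with $S_r := \alpha_1+\cdots+\alpha_r \geq 3/8+\eta$; since $S_r$ is a subsum outside the forbidden interval, necessarily $S_r > 5/8 - \eta$, whence $\alpha_r > 1/4 - 2\eta$. The case $r=1$ contradicts the failure of (C1); the case $r \geq 3$ would force $\alpha_1,\alpha_2 \geq \alpha_r > 1/4 - 2\eta$ and hence $S_2 > 1/2 - 4\eta > 3/8 + \eta$ (here we use $\eta < 1/40$), contradicting the minimality of $r$. Thus $r = 2$, so
\[
  \alpha_1 + \alpha_2 > 5/8 - \eta, \qquad \alpha_1,\alpha_2 \in (1/4 - 2\eta,\, 3/8 + \eta).
\]

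The next step pins down $\alpha_3$ by the same idea applied to an augmented subsum. I claim $\alpha_1 + \alpha_3 > 5/8 - \eta$. Suppose instead that $\alpha_1 + \alpha_3 < 3/8 + \eta$. Then every $\alpha_j$ with $j \geq 3$ satisfies $\alpha_j \leq \alpha_3 < 3/8 + \eta - \alpha_1 < 1/8 + 3\eta$, which is strictly less than the jump threshold $1/4 - 2\eta$ whenever $\eta < 1/40$. Adding $\alpha_4,\alpha_5,\ldots,\alpha_{2J}$ one at a time to the running subsum $\alpha_1 + \alpha_3$ produces increments too small to cross the forbidden interval, so $\alpha_1 + \alpha_3 + \alpha_4 + \cdots + \alpha_{2J} = 1 - \alpha_2 < 3/8 + \eta$, forcing $\alpha_2 > 5/8 - \eta$, contradicting $\alpha_2 < 3/8 + \eta$. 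Hence $\alpha_1 + \alpha_3 > 5/8 - \eta$, and an identical argument with $\alpha_1$ replaced by $\alpha_2$ gives $\alpha_2 + \alpha_3 > 5/8 - \eta$. From $\alpha_1 + \alpha_3 > 5/8 - \eta$ and $\alpha_1 < 3/8 + \eta$ we then read off the first inequality of (C3) in the form $\alpha_3 > 1/4 - 2\eta$, which is the content of $Y_3 \gg N^{1/4-\eta}$ up to the slack absorbed by $\gg$.

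Adding the two inequalities from the previous paragraph yields $\alpha_1 + \alpha_2 + 2\alpha_3 > 5/4 - 2\eta$, and we write
\[
  \alpha_1 + \alpha_2 + \tfrac{5}{4}\alpha_3 = (\alpha_1 + \alpha_2 + 2\alpha_3) - \tfrac{3}{4}\alpha_3.
\]
If $\alpha_3 \leq 5/16 - \eta/2$ then $\tfrac{3}{4}\alpha_3 \leq 15/64 - 3\eta/8$, hence $\alpha_1 + \alpha_2 + \tfrac{5}{4}\alpha_3 > (5/4 - 2\eta) - (15/64 - 3\eta/8) = 65/64 - 13\eta/8$. If instead $\alpha_3 > 5/16 - \eta/2$, then the ordering $\alpha_1 \geq \alpha_2 \geq \alpha_3$ gives $\alpha_1 + \alpha_2 \geq 2\alpha_3$ and so $\alpha_1 + \alpha_2 + \tfrac{5}{4}\alpha_3 \geq \tfrac{13}{4}\alpha_3 > \tfrac{13}{4}(5/16 - \eta/2) = 65/64 - 13\eta/8$. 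Either way the second inequality of (C3) holds. The crux of the proof is the claim $\alpha_1 + \alpha_3 > 5/8 - \eta$: the hypothesis $\eta < 1/40$ is used there in exactly the form $1/8 + 3\eta < 1/4 - 2\eta$ to block any jump of a shifted tail subsum across the forbidden interval; once this is in hand, the two inequalities of (C3) follow by the tidy algebraic splitting above.
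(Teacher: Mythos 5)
Your proof is correct, and it establishes the same normalized statement (the paper's Lemma after this one, with the $\nu_i$'s summing exactly to $1$) by a genuinely different combinatorial path through the same forbidden-interval idea. The paper locates the critical jump inside the \emph{tail} partial sums $\nu_2+\cdots+\nu_j$: choosing $j$ largest with this sum below $5/8-\eta$ gives $\mu\leq 3/8+\eta$ and $\mu+\nu_{j+1}\geq 5/8-\eta$, hence $\nu_3\geq\nu_{j+1}\geq 1/4-2\eta$ in one step; a single further nudge across the gap upgrades $\nu_2+\nu_3\geq 1/2-4\eta$ to $\nu_2+\nu_3\geq 5/8-\eta$, and the conclusion follows from the one-line chain $\nu_1+\nu_2+\tfrac54\nu_3\geq 2\nu_2+\tfrac54\nu_3\geq\tfrac{13}{8}(\nu_2+\nu_3)\geq\tfrac{65}{64}-\tfrac{13}{8}\eta$, with no case distinction. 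You instead locate the jump in the \emph{head} partial sums $S_r=\alpha_1+\cdots+\alpha_r$, which pins down $r=2$ and hence $\alpha_2>1/4-2\eta$, but not yet $\alpha_3$; to reach $\alpha_3$ you then run the separate inductive argument that increments $\alpha_j<1/4-2\eta$ ($j\geq 3$) cannot push a running subsum across the gap, obtaining $\alpha_1+\alpha_3>5/8-\eta$ and $\alpha_2+\alpha_3>5/8-\eta$, and you close with a two-case split on whether $\alpha_3\lessgtr 5/16-\eta/2$. Both routes are valid; the paper's is tighter because the tail-sum jump delivers $\nu_3$ directly, whereas your head-sum jump delivers $\alpha_2$ and must do extra work to move the index up to $3$, and the paper's final algebra avoids the case split. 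One caveat that is inherited from the paper itself and not a defect of your argument: what both proofs actually produce is $\alpha_3>1/4-2\eta$, translating to $Y_3\gg N^{1/4-2\eta}$ rather than literally $N^{1/4-\eta}$ as written in~\ref{C3}; this gap between the lemma as stated and as proved is immaterial for the application, where $\eta<1/114$ and only $1/4-2\eta>1/5$ is used.
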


Obviously, Lemma~\ref{l32Yprods} follows from the next lemma, which asserts similar but neat inequalities.

\begin{lem}
Let $0 < \eta < 1/40$ and $\nu_1,\dots,\nu_r$ be positive numbers with
\begin{gather}
 \nu_1 \geq \dots \geq \nu_r
\tag{\ref{e317Ydec}'}
\\
 \nu_1 + \cdots + \nu_r = 1.
\tag{\ref{e318Yprod}'} \label{e318'NuSum}
\end{gather}
Then, at least one of the following three cases hold:
\begin{enumerate}[label={\normalfont{(C\arabic*')}}]
\item\label{C1'} we have
\[
 \nu_1
 > \frac{5}{8} - \eta,
\]
\item\label{C2'} there is a subsum of~\eqref{e318'NuSum}, say $\mu$, with
\[
 \frac{3}{8} + \eta
 < \mu
 < \frac{5}{8} - \eta,
\]
\item\label{C3'} we have
\[
 \nu_1 \geq \nu_2 \geq \nu_3 \geq \frac{1}{4} - 2 \eta
  \quad \text{and} \quad
 \nu_1 + \nu_2 + \frac{5}{4} \nu_3
 \geq \frac{65}{64} - \frac{13}{8} \eta.
\]
\end{enumerate}
\end{lem}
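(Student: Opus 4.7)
Set $\alpha = \tfrac{3}{8} + \eta$ and $\beta = \tfrac{5}{8} - \eta$, so $\alpha + \beta = 1$. The plan is to argue by contradiction: assume (C1') and (C2') both fail and derive (C3'). The failure of (C2') says no subsum of $\{\nu_1, \dots, \nu_r\}$ lies in the open gap $(\alpha, \beta)$, while the failure of (C1') together with the singleton subsum $\nu_1$ gives $\nu_i \leq \alpha$ for every $i$. The hypothesis $\eta < 1/40$ will be used to ensure that certain derived upper bounds on individual $\nu_i$ stay strictly below the gap length $\beta - \alpha = \tfrac{1}{4} - 2\eta$.

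I would first analyze the running sums $s_j = \nu_1 + \cdots + \nu_j$ to locate the ``jump'' across the forbidden band. Each $s_j$ is a subsum, hence lies in $[0,\alpha] \cup [\beta, 1]$. Since $s_0 = 0$ and $s_r = 1$, there is a least $j$ with $s_j \geq \beta$. The inequality $j \geq 3$ would force $\nu_1, \dots, \nu_j \geq \nu_j \geq \beta - \alpha = \tfrac{1}{4} - 2\eta$, whence $s_{j-1} \geq \tfrac{1}{2} - 4\eta > \alpha$ by $\eta < 1/40$, a contradiction. Thus $j = 2$ and $\nu_1 + \nu_2 \geq \beta$; in particular $\nu_1 \geq \beta/2$ and $\nu_2 \geq \beta - \alpha = \tfrac{1}{4} - 2\eta$.

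Next, for $\nu_3 \geq \tfrac{1}{4} - 2\eta$, I would examine the subsum $\nu_1 + \nu_3$. If it lies in $[\beta, 1]$ we are done. Otherwise $\nu_3 \leq \alpha - \nu_1 \leq \alpha - \beta/2 = \tfrac{1}{16} + \tfrac{3}{2}\eta$, and by monotonicity every $\nu_i$ with $i \geq 3$ satisfies the same bound. Here enters the \emph{subset-sum no-jump argument}: for each $T \subseteq \{3, \dots, r\}$, the subsum $\nu_1 + s(T)$ avoids the forbidden band, so $s(T) \in [0, \alpha - \nu_1] \cup [\beta - \nu_1, 1 - \nu_1]$, where the gap has length $\beta - \alpha$, strictly greater than $\max_{i \geq 3} \nu_i$. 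Walking along the chain $\emptyset \subsetneq \{r\} \subsetneq \{r{-}1,r\} \subsetneq \cdots \subsetneq \{3,\dots,r\}$, each step changes $s(T)$ by less than the gap, so one cannot cross from the lower piece to the upper piece. Thus $\sum_{i \geq 3}\nu_i \leq \alpha - \nu_1$, giving $\nu_2 \geq 1 - \alpha = \beta > \alpha$, contradicting $\nu_2 \leq \alpha$. So $\nu_1 + \nu_3 \geq \beta$, hence $\nu_3 \geq \beta - \nu_1 \geq \tfrac{1}{4} - 2\eta$.

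For the second inequality of (C3'), I would split on $\nu_3 + \nu_4$ (taking $\nu_4 = 0$ when $r = 3$). If $\nu_3 + \nu_4 \geq \beta$, then $\nu_3 \geq \beta/2$ and directly $\nu_1 + \nu_2 + \tfrac{5}{4}\nu_3 \geq \beta + \tfrac{5\beta}{8} = \tfrac{13\beta}{8}$. Otherwise $\nu_3 + \nu_4 \leq \alpha$ and $\nu_4 \leq \alpha - \nu_3 \leq \tfrac{1}{8} + 3\eta < \tfrac{1}{4} - 2\eta$, which forces $\nu_i \leq \alpha - \nu_1$ for all $i \geq 4$; rerunning the no-jump argument on subsets of $\{4, \dots, r\}$ yields $\sum_{i \geq 4}\nu_i \leq \alpha - \nu_1$, hence $\nu_2 + \nu_3 \geq \beta$. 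Then, using $\nu_1 \geq \beta/2$,
\[
\nu_1 + \nu_2 + \tfrac{5}{4}\nu_3 \geq \nu_1 + (\nu_2 + \nu_3) + \tfrac{1}{4}\nu_3 \geq \nu_1 + \beta + \tfrac{1}{4}(\beta - \nu_1) = \tfrac{3}{4}\nu_1 + \tfrac{5}{4}\beta \geq \tfrac{13}{8}\beta,
\]
which equals $\tfrac{65}{64} - \tfrac{13}{8}\eta$ as required. The main obstacle I anticipate is handling arbitrary $r$: a direct case analysis on $r$ proliferates rapidly, but the no-jump gap argument collapses the combinatorics into a single uniform step driven by the numerical constraint $\eta < 1/40$.
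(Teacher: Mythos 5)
Your proof is correct but takes a more elaborate route than the paper's. The paper scans the partial sums $\nu_2 + \cdots + \nu_j$ (starting at index $2$) and takes the largest $j$ with this sum $< \frac{5}{8} - \eta$; negating~\ref{C2'} then gives $\nu_2 + \cdots + \nu_j \leq \frac{3}{8} + \eta$ while $\nu_2 + \cdots + \nu_{j+1} \geq \frac{5}{8} - \eta$, so $\nu_{j+1} \geq \frac{1}{4} - 2\eta$, and since $j + 1 \geq 3$ this yields $\nu_3 \geq \frac{1}{4} - 2\eta$ at once. Then $\nu_2 + \nu_3 \geq 2\nu_3 \geq \frac{1}{2} - 4\eta > \frac{3}{8} + \eta$, so by~\ref{C2'} again $\nu_2 + \nu_3 \geq \frac{5}{8} - \eta$, and the final estimate follows from $\nu_1 + \nu_2 + \frac{5}{4}\nu_3 \geq 2\nu_2 + \frac{5}{4}\nu_3 \geq \frac{13}{8}(\nu_2 + \nu_3)$, which holds because $\nu_2 \geq \nu_3$. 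You instead start the running sums at $\nu_1$, obtain $\nu_1 + \nu_2 \geq \beta$, and then deploy two separate ``no-jump'' subset-sum arguments (on $\{3,\ldots,r\}$ and on $\{4,\ldots,r\}$) to pin down $\nu_3 \geq \beta - \nu_1$ and $\nu_2 + \nu_3 \geq \beta$; the numerical checks (each increment strictly below $\beta - \alpha$, using $\eta < 1/40$) all go through, so this is sound. Note, however, that once you have $\nu_3 \geq \beta - \nu_1 \geq \beta - \alpha = \frac{1}{4} - 2\eta$, the bound $\nu_2 + \nu_3 \geq 2\nu_3 > \alpha$, hence $\geq \beta$, is immediate, making your case split on $\nu_3 + \nu_4$ and the second no-jump application unnecessary. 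The key economy of the paper's proof is simply starting the cumulative sums at $\nu_2$ rather than $\nu_1$: the jump then lands on an index $j + 1 \geq 3$, giving $\nu_3 \geq \frac{1}{4} - 2\eta$ with no further subset-sum work.
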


\begin{proof}
Negating~\ref{C1'}, \ref{C2'}, we are going to derive~\ref{C3'}. First we find that $\nu_1 \leq \frac{3}{8} +\eta$. Hence $r \geq 3$. Let $j \geq 2$ be the largest such that
\[
 \mu
 = \nu_2 + \cdots + \nu_j
 < \frac{5}{8} - \eta.
\]
Since $\nu_2 + \cdots + \nu_r = 1 - \nu_1 \geq \frac{5}{8} - \eta$, it follows that $2 \leq j < r$. Then, by the negation of~\ref{C2'}, we have $\mu \leq \frac{3}{8} + \eta$. Since $\mu + \nu_{j+1} \geq \frac{5}{8} - \eta$, we get
\[
 \nu_3
 \geq \nu_{j+1}
 \geq \frac{1}{4} - 2 \eta.
\]
Hence we get $\nu_2 + \nu_3 \geq 2 \nu_3 \geq \frac{1}{2} - 4 \eta > \frac{3}{8} + \eta$, and by the negation of~\ref{C2'} this lower bound improves to $\nu_2 + \nu_3 \geq \frac{5}{8} - \eta$. Hence,
\[
 \nu_1 + \nu_2 + \frac{5}{4} \nu_3
 \geq 2 \nu_2 + \frac{5}{4} \nu_3
 \geq \frac{13}{8} ( \nu_2 + \nu_3 )
 \geq \frac{65}{64} - \frac{13}{8} \eta,
\]
so the conditions~\ref{C3'} hold.
\end{proof}


\subsection{Primes in residue classes to divisor-rich moduli}

Given $z \geq 2$ we say that a squarefree number $q$ is \emph{divisor-rich} if it has divisors of any size up to a factor $\leq z$. For example, if $q\mid P(z)$ then $q$ is divisor-rich. 

Let $Z(X) \in \Z[X]$ be a polynomial which has no fixed prime divisors, that is, for every prime $p$ the number of roots of $Z(x) \equiv 0 \pmod{p}$ is $\omega(p) < p$. The following result is a refined version of Theorem 2 of Zhang~\cite{Z}.

\begin{prop} \label{p34LambdaSumRichEst}
Let $\cQ = \cQ(z)$ be the set of divisor-rich numbers.
We have
\begin{equation}
 \sum_{\substack{q \in \cQ \\ q \leq Q}} \sumst_{\substack{a (\mathrm{mod} \, q) \\ q \mid Z(a)}} \left|
  \sum_{n \equiv a \,(q)} \Lambda(n) F\left(\frac{n}{N} \right) - \frac{N}{\varphi(q)}
  \right|
 \ll N (\log N)^{-A},
\label{e319LambdaSumRich}
\end{equation}
provided
\begin{equation}
 z^{\frac{86}{207}} Q
 \leq N^{\frac{104}{207} - \eps}.
\label{e320condZQN}
\end{equation}
Here $\eps,A$ are any positive numbers and the implied constant depends only on $\eps$, $A$, and the polynomial $Z(X)$.
\end{prop}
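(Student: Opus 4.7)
The plan is to combine the smooth Heath--Brown decomposition of \S3.1 with the combinatorial split of Lemma~\ref{l32Yprods}, then invoke the three different input estimates that the remaining sections of the paper are designed to provide.

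First I would apply the identity (\ref{e314LambdaDecompConv}) to rewrite $\Lambda(n)F(n/N)$ as a sum over $1\le J\le K$ of integrals over $\cM_{JK}(N)$ of Dirichlet convolutions $A_n$ in $2J$ smoothly localized variables. Inserting this into the left side of (\ref{e319LambdaSumRich}) and interchanging, it suffices to fix $J$, fix dyadic localizations $(y_1,\ldots,y_{2J})\in\cM_{JK}(N)$, and prove a bound of shape $N(\log N)^{-A-C(K)}$ for the corresponding restricted sum of $A_n$ over $q\in\cQ$, $q\le Q$, and roots $a\pmod q$ of $Z$. The measure bound on $\cM_{JK}(N)$ contributes only $(\log N)^{O(K)}$, which is harmless since $A$ is arbitrary, and likewise the $O(1)^{\omega(q)}$ root classes $a$ contribute only a tame divisor-type factor. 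Relabel the localizations as $Y_1\ge\cdots\ge Y_{2J}$ with $Y_1\cdots Y_{2J}\asymp N$, fix $\eta$ small (with $K$ large enough that $1/(2K)<\eta$), and apply Lemma~\ref{l32Yprods} to reduce to cases (C1), (C2), (C3).

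In case (C1), where one variable exceeds $N^{5/8-\eta}$, I would group the remaining factors into a single smooth coefficient (bounded in $\ell^\infty$ by a fixed power of $\tau$), so that the inner sum becomes a Type-I sum with a long smooth variable. Poisson summation on the long variable, followed by the factorization $q=q_1q_2$ with $q_1,q_2$ of prescribed size (available precisely because $q\in\cQ$ is divisor-rich) to split the complete Kloosterman sums that arise, comfortably yields the required saving in the range (\ref{e320condZQN}). In case (C2), a subproduct of the $Y_j$ lies in $[N^{3/8+\eta},N^{5/8-\eta}]$; grouping the factors accordingly exhibits $A_n$ as a bilinear form $\alpha*\beta$ with coefficients bounded by $\tau_{2K}$ and supported in complementary ranges $X$ and $N/X$ with $X$ in the middle band. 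The bilinear estimate advertised for Sections~\ref{c4bilForms}--\ref{c5disp}, proved there by the dispersion method of Linnik with Zhang's modulus factorization playing the key role, delivers the required saving in exactly the window that (\ref{e320condZQN}) sets up.

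Case (C3) is the hard part and the main obstacle. Here three variables are each of size $\gg N^{1/4-\eta}$ and satisfy $Y_1Y_2Y_3^{5/4}\gg N^{65/64-13\eta/8}$, so no grouping yields a bilinear form of useful aspect ratio: the sum is essentially a Type-III $\tau_3$-convolution. I would invoke the smoothly cropped divisor-function estimates promised in Section~\ref{c6CroppDiv}, which give a power-saving bound for $\sum_{n\equiv a\,(q)}\tau_3(n)\Phi_1(\cdot)\Phi_2(\cdot)\Phi_3(\cdot)$ uniformly in a single divisor-rich modulus $q$ and a single reduced class $a$, provided the crop sizes fall in an admissible region. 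Summing trivially over $q\le Q$ in $\cQ$ and over the roots $a\pmod q$ of $Z$ then closes the argument. The delicate step is the numerical matching: the admissible region from Section~\ref{c6CroppDiv} comes from Weil-type bounds for incomplete $\tau_3$-sums modulo $q$, sharpened by Zhang's use of the factorization $q=q_1q_2$ with $q_i\le z$, and the specific exponents $86/207$ and $104/207$ in (\ref{e320condZQN}) should emerge precisely from balancing this admissible region against the lower bound $Y_1Y_2Y_3^{5/4}\gg N^{65/64-O(\eta)}$ of (C3). Verifying this balance (and confirming that the same $(z,Q,N)$ range simultaneously suffices in (C1) and (C2), which are strictly easier) is the numerical heart of the proof.
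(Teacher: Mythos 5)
Your outline follows the paper's actual proof: apply the smooth Heath--Brown decomposition~\eqref{e314LambdaDecompConv} (the paper takes $K=5$, $M=4N^{1/5}$), localize dyadically, invoke Lemma~\ref{l32Yprods}'s trichotomy, and send case~\ref{C2} to the dispersion bilinear estimate (Corollary~\ref{corEstBilForm}) and case~\ref{C3} to the cropped $\tau_3$ estimate (Corollary~\ref{cor53estSumFlmn}). Two places where your account diverges from the argument are worth flagging. First, case~\ref{C1} is dispatched trivially in the paper: once $Y_1 \gg N^{5/8-\eta}$ the corresponding smooth long variable gives, for each fixed $q$ and root $a$, an error $\ll N/Y_1 \ll N^{2/5}$, and summing over $q\le Q \asymp N^{1/2+\text{small}}$ already yields $\ll N^{9/10+\text{small}} \ll N(\log N)^{-A}$ --- no Poisson summation, no Kloosterman sums, and no modulus factorization are needed or used in this case. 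Second, to apply the bilinear estimate in case~\ref{C2} you must verify the S-W condition for one of the two factors; the paper does this by observing that each of the two subproducts necessarily contains a variable of size $\gg N^{3/80}$ carrying a smooth crop (possibly times $\mu$), cf.~\eqref{e4.7.1}--\eqref{e4.7.2}. Your proposal omits this verification, and it is a genuine hypothesis of Corollary~\ref{corEstBilForm}.

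Finally, a structural point you leave implicit: the paper first removes $Q \le N^{1/2-\eps/4}$ by Bombieri--Vinogradov and then uses $Q \ge N^{1/2-\eps/3}$ together with~\eqref{e320condZQN} to deduce the clean bound $z \le N^{1/172}$; and the parameter $\eta$ is not a free small constant but is \emph{defined} by $z^{3/2}Q^{11/4}=N^{11/8+\eta-\eps}$ as in~\eqref{e326EtaDef}, which is the device coupling Lemma~\ref{l32Yprods} to the admissible ranges of Corollaries~\ref{corEstBilForm} and~\ref{cor53estSumFlmn}, and from which one checks $\eps/12 \le \eta < 1/114 < 1/40$. Your treatment of $\eta$ and $K$ as freely chosen parameters obscures this bookkeeping, which is precisely where the exponents $86/207$ and $104/207$ come from.
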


\begin{rems}
One can replace the smooth cropping $F(n/N)$ by the sharp cutting $n \leq N$, 
but this is not required in applications. However, the restriction to 
divisor-rich 
moduli is essential in Zhang's arguments. 
\end{rems}

The proof of Proposition~\ref{p34LambdaSumRichEst} depends on several 
results on the distribution of general sequences over residue classes 
which are established in Sections~\ref{c4bilForms} to~\ref{c6CroppDiv}. In 
this section we derive~\eqref{e319LambdaSumRich} by combining these results.

The sum over $a$ in ~\eqref{e319LambdaSumRich} makes the result look, at 
first glance, deceptively strong, but actually, there are not so many residue 
classes involved; $\omega (q)\ll \tau(q)^k$. 
Hence, it suffices to show that 
\begin{equation}
 \sum_{\substack{q \in \cQ \\ q \leq Q}} \frac{1}{\omega(q)}
\sumst_{\substack{a (\mathrm{mod} \, q) \\ q \mid Z(a)}} \left|
  \sum_{n \equiv a \, (q)} \Lambda(n) F\left(\frac{n}{N}\right) 
- \frac{N}{\varphi(q)}
  \right|
 \ll N (\log N)^{-A},
\label{eSumTauLambdaFBd}
\end{equation}
Clearly, ~\eqref{eSumTauLambdaFBd}
implies Proposition~\ref{p34LambdaSumRichEst}, by 
applying Cauchy's inequality and the easy bound
\[
 \sum_{q \leq Q} \tau(q)^{2k} \Bigg(
  \sum_{\substack{n \equiv a \,(q) \\ n \leq 2N}} \Lambda(n) + \frac{N}{\varphi(q)}
  \Bigg)
 \ll N (\log N)^{4^k}.
\]
Moreover, the Bombieri-Vinogradov Theorem (see~\eqref{e16BomVinDistMod} 
or (9.81) of~\cite{FI2}) covers the range $q \leq N^{\frac{1}{2} - \frac{\eps}{4}}$.

Now, for the proof of~\eqref{eSumTauLambdaFBd} we can assume that $N$ is 
sufficiently large in terms of $\eps$, and that $q$ runs over $\cQ=\cQ(z)$ in 
the segment $Q < q \leq 2Q$ with
\begin{equation}
 Q
 \geq N^{\frac{1}{2} - \frac{\eps}{3}}
\end{equation}
Therefore~\eqref{e320condZQN} implies
\begin{equation}
 z
 \leq N^{1 / 172}.
\label{e322ZleqN172}
\end{equation}
We are going to apply the decomposition~\eqref{e314LambdaDecompConv} with $K = 5$ and $M = 4N^{\frac{1}{5}}$. It suffices to show that
\begin{equation}
 \sum_{\substack{q \sim Q\\q \in \cQ}} \frac{1}{\omega(q)}
\sumst_{\substack{a (\mathrm{mod} \, q) \\ q \mid Z(a)}} \left|
  \sum_{n \equiv a\,(q)} A_n - \frac{1}{\varphi(q)} \sum_{(n,q) = 1} A_n
  \right|
 \ll N(\log N)^{-A}
\label{e323SumAnEst}
\end{equation}
for every sequence $A_n = A_n(t_1,\dots,t_{2J};y_1,\dots,y_{2J})$ 
of type~\eqref{eseqA}. Recall that
\begin{align}
 \frac{1}{2} 
 &< y_j < 2^{10} N,
 & & 1 \leq j \leq 2J,
\\
 2^{-11} N
 &< y_1 \cdots y_{2J}
 < 2^{10}N,
\end{align}
and $\frac{1}{2} < t_j < 2$, $1 \leq j \leq 2J$, with $1 \leq J \leq 5$. Let $\eta$ be given by
\begin{equation}
 z^{\frac{3}{2}} Q^{\frac{11}{4}}
 = N^{\frac{11}{8} + \eta - \eps}.
\label{e326EtaDef}
\end{equation}
By~\eqref{e320condZQN} and~\eqref{e322ZleqN172} one can check that $\eps / 12 \leq \eta < 1/114$, a fortiori $0 < \eta < 1/40$. According to Lemma~\ref{l32Yprods} we can arrange the set $\{y_1,y_2,\dots,y_{2J}\}$ into a decreasing sequence $Y_1 \geq Y_2 \geq \dots \geq Y_{2J} > \frac{1}{2}$ which satisfies at least one of the three conditions~\ref{C1}, \ref{C2}, \ref{C3}.

If~\ref{C1} holds, then $Y_1 \gg N^{3/5}$, so the corresponding variable 
$l_j\asymp Y_1$ appears in~$A_n, n=l_1\ldots l_Jm_1\ldots m_J$, with a smooth crop function and the inner sum in~\eqref{e323SumAnEst} is bounded by $N Y_1^{-1} \ll N^{2/5}$. Summing over $q$ we see that in this case~\eqref{e323SumAnEst} is bounded by
\[
 N^{\frac{2}{5}} Q
 \ll N^{\frac{9}{10} + \frac{1}{414}}
 \ll N ( \log N)^{-A}.
\]

Suppose~\ref{C2} holds, so the sequence $Y_1 \geq Y_2 \geq \dots \geq Y_{2J}$ 
can be partitioned into two subsequences whose products, say $N_1,N_2$, satisfy
\[
 N^{\frac{3}{8} + \eta}
 \ll N_1,N_2
 \ll N^{\frac{5}{8} - \eta},
  \quad N_1 N_2 \asymp N.
\]
Obviously, each of the two subsequences has an element, $Y'$ and $Y''$ 
respectively, with $Y',Y'' \gg N^{3/80}$ and the corresponding variables 
$m'\asymp Y', m''\asymp Y''$ appear in $A_n$ with a smooth crop 
function multiplied, or not, 
by the M\"obius function. Therefore, the S-W condition holds for each of 
the two convolution sequences, 
see the examples~\eqref{e4.7.1} and~\eqref{e4.7.2}. We can assume, 
by exchanging $N_1$ and $N_2$ if necessary, that
\begin{equation}
 N^{\frac{3}{8} + \eta}
 \ll N_1
 \ll N^{\frac{1}{2}}.
\label{e327N1betN}
\end{equation}
Note that $Q \gg N_1 N^{-\eps / 3}$. We can factor the divisor-rich moduli 
$q \sim Q$ into $q = rs$ with $r \asymp R$, $s \asymp S$, $RS = Q$, such that
\begin{equation}
 z^{-1} N_1
 \ll R N^\eps
 \ll N_1.
\end{equation}
Then~\eqref{e323SumAnEst} is bounded by 
$$
\sum_r\max_{(a,r)=1}\sum_s\max_{(b,s)=1}
 \left|
  \sum_{\substack{n \equiv a\,(r)\\n \equiv b\,(s)}} A_n - \frac{1}{\varphi(rs)} \sum_{(n,rs) = 1} A_n
  \right|
$$
For $\eta$ given by~\eqref{e326EtaDef}, the lower bound of~\eqref{e327N1betN} 
becomes
\begin{equation}
 z^{\frac{3}{2}} Q^{\frac{11}{4}}
 \ll N_1 N^{1-\eps}.
\end{equation}
The above two estimates verify the conditions of Corollary~\ref{corEstBilForm},
with $x$, $N$ replaced by $N$, $N_1$, respectively.
Therefore,~\eqref{eEstBilForm} of Corollary~\ref{corEstBilForm} 
applies, showing 
that~\eqref{e323SumAnEst} is bounded by $N(\log N)^{-A}$, as requested.

Now we are left with the third condition~\ref{C3}. In this case we have $Y_1 \geq Y_2 \geq Y_3 \geq 8N^{\frac{1}{5}} = 2M$, so the corresponding variables $n_1 \asymp Y_1$, $n_2 \asymp Y_2$, $n_3 \asymp Y_3$, appear in $A_n$ with smooth crop functions. The product of the remaining variables is $d = n / n_1 n_2 n_3$, so $d Y_1 Y_2 Y_3 \asymp N$. For $\eta$ given by~\eqref{e326EtaDef}, the second estimate of~\ref{C3} becomes
\begin{equation}
 Y_1 Y_2 Y_3^{\frac{5}{4}}
 \gg N^{\frac{65}{64}} ( z^{\frac{3}{2}} Q^{\frac{11}{4}} N^{\eps - \frac{11}{8}} )^{-13/8}.
\end{equation}
Hence, using~\eqref{e320condZQN} one can derive the following upper bound for $Q$
\begin{equation}
 Q
 \ll \left( \frac{Y_3}{z} \right)^{\frac{1}{8}} \left( \frac{N}{d} \right)^{\frac{1}{2}} N^{-\frac{155}{64} \eps}.
\end{equation}
This verifies the condition~\eqref{esixten} of Corollary~\ref{cor53estSumFlmn},
with $L$, $M$, $N$, $x$ replaced by $Y_1$, $Y_2$, $Y_3$, $N$ respectively,   
which proves~\eqref{e323SumAnEst} in this case.

Having covered all cases, we complete the proof of 
~\eqref{eSumTauLambdaFBd} and Proposition~\ref{p34LambdaSumRichEst}.


\subsection{Completing the proofs of the theorems}
\label{s34ProofsThms}

Recall the formula~\eqref{e229Wexpw} in which $w(D,N)$ satisfies the lower bound \eqref{e238wLB}. Here $D = N^\theta$ is the level of distribution for primes in residue classes which must ensure that the remainder terms in~\eqref{e229Wexpw} are negligible, meaning
\begin{equation}
 R_h(D,N)
 \ll N(\log N)^{-A}.
\label{e333RhBd}
\end{equation} 
Recall that $z = D^{1/2s}$ is the level of the sieve support restriction. 
Proposition~\ref{p34LambdaSumRichEst} ensures~\eqref{e333RhBd}, provided
\begin{equation}
 z^{\frac{86}{207}} D
 \leq N^{\frac{104}{207} - \eps}.
\label{e334zDBd}
\end{equation}
Putting $D = N^{\frac{1}{2} + \delta}$ we definitely get~\eqref{e334zDBd} if
\begin{equation}
 \delta
 < \frac{1}{144} \left( 1 - \frac{43}{s} \right).
\label{e335DeltaCond}
\end{equation}
On the other hand, the positivity condition~\eqref{e29CkPos} requires
\begin{equation}
 \delta
 > \frac{1}{\sqrt{k}}
  + \frac{1}{k}
  + (1 + 2\delta) s e^{-k/s}.
\label{e336DeltaLB}
\end{equation}
We choose $s = 4494$ and find that $\delta = 1/418$ satisfies~\eqref{e335DeltaCond}. Then we find that $k = (419)^2 = 175561$ satisfies~\eqref{e336DeltaLB}. This completes the proof of~\eqref{e27WAsymp}.

Applying the inequality
\[
 \sum_j c_j
 \leq 1 + \mathop{\sum \sum}_{i < j} c_i c_j,
\]
which holds for any $c_j$ with $0 \leq c_j \leq 1$, we derive by~\eqref{e25defan}, \eqref{e26WeLB}, \eqref{e27WAsymp} that
\begin{equation}
 \sum_{(Q(n),\Delta)=1} F(n/N) \mathop{\sum \sum}_{h \neq h'}
 \Lambda(n-h) \Lambda(n-h') \Bigg(
  \sum_{\substack{d \mid Q(n) \\ d \mid P(z)}} \lambda_d
  \Bigg)
 \gg N (\log N)^{2 - k}.
\label{e338LambdanhLB}
\end{equation}
Here the contribution of $n$'s with $Q(n)$ having a very 
small prime divisor is relatively insignificant, 
see Section~10.3 of~\cite{FI2}. Specifically, by~(10.7) of~\cite{FI2} we get
\[
 \sum_{(Q(n),\Delta)=1} F(n/N) \Bigg(
  \sum_{\substack{p \mid Q(n) \\ p < w}} 1
  \Bigg)
 \Bigg(
  \sum_{\substack{d \mid Q(n) \\ d \mid P(z)}} \lambda_d
  \Bigg)
 \ll \frac{\log w}{\log N} N (\log N)^{-k}
\]
for every $w \geq 2$. Hence, if $w = N^{1/b}$ with $b$ sufficiently large, this contribution can be ignored and~\eqref{e338LambdanhLB} holds for the sum with the extra restriction $(Q(n),P(w))=1$. Having this extra restriction, we can remove the sieve weights by the following trivial estimation:
\[
 \sum_{\substack{d \mid Q(n) \\ d \mid P(z)}} \lambda_d
 \leq \sum_{d \mid Q(n)} \tau_3(d)
 \leq \tau_4\big(Q(n)\big)
 \leq 4^{b k}.
\]
This completes the proof of the lower bound of~\eqref{e28MainThm}. To get an 
upper bound we replace $\Lambda(n-h)\Lambda(n-h')$ by $(\log n)^2$ and apply 
any reasonable sieve which controls the condition $(Q(n),P(w))=1$. This 
completes the proof of the Main Theorem.

For the proof of the Prime Gaps Theorem we require a specific admissible set 
$\cH = \{h_1,\dots,h_k\}$. One can easily see that any set of $k$ distinct 
primes larger than $k$ is admissible. We choose the set of consecutive 
primes $k < h_1 < h_2 < \dots < h_k$, where $h_1$ is the first exceeding 
$k$ and $h_k$ is the largest of the bunch. This satisfies
\begin{equation}
 \pi(h_k)
 = k + \pi(k),
\end{equation}
so $h_k \sim k \log k$. For $k = 175561$ we find the following numbers:
\begin{equation*}
\begin{gathered}
 h_1
 = 175573
 = p_{15954},
\\
 \pi(h_k)
 = k + \pi(k)
 = 175561 + 15953
 = 191 \, 514,
\\
 h_k
 = p_{191514}
 = 2 \, 624 \, 371,
\\
 h_k - h_1
 = 2 \, 448 \, 798.
\end{gathered}
\end{equation*}
This completes the proof of the Prime Gaps Theorem.

\section{\bf{Bilinear Forms in Residue Classes}}
\label{c4bilForms}

\subsection{Classical results}

Counting prime numbers in residue classes $a \pmod{q}$ with $(a,q) = 1$ has 
been transformed in Section~\ref{c3proofMainThm} to estimates for bilinear 
forms of type
\begin{equation}
 \mathop{\sum \sum}_{mn \equiv a \, (q)} \alpha_m \beta_n
\label{e41bilForm}
\end{equation}
for special sequences $\cA = (\alpha_m)$, $\cB = (\beta_n)$ with
\begin{align}
 |\alpha_m|
 &\leq \tau(m)^j ,
\label{e42BilFormAlpBd}
\\
 |\beta_n|
 &\leq \tau(n)^j . 
\label{e43BilFormBetaBd}
\end{align}
Here and hereafter, $j$ denotes a bounded number which can be different 
on each
occurrence. The available technology is capable of dealing with such 
bilinear forms in great generality if at least one of the sequences is 
known to be uniformly distributed over the reduced residue classes to 
moduli which are relatively small. Specifically, we shall assume and 
appeal several times to the following properties of $(\beta_n)$.

\begin{swcond}
Let $\cB = (\beta_n)$ be a sequence of complex numbers with $|\beta_n| \leq \tau(n)^j$ for some $j \geq 1$. We say that $\cB$ satisfies the \emph{Siegel-Walfisz condition} in the segment $1 \leq n \leq N$ if, for every $k \geq 1$ and $(l,k)=1$, we have
\begin{equation}
 \sum_{\substack{n \leq N \\ n \equiv l \, (k)}} \beta_n
 - \frac{1}{\varphi(k)} \sum_{\substack{n \leq N \\ (n,k) = 1}} \beta_n
 \ll N (\log 2N)^{-A}
\label{e44SWcond}
\end{equation}
with any $A \geq 1$, the implied constant depending only on $A$ and $j$.
\end{swcond}

\begin{prop}
\label{p41BetaSumRes}
If $(\beta_n)$ satisfies the S-W condition in the segment $1 \leq n \leq N$, 
then
\begin{equation}
 \sum_{q \leq Q} \sumst_{a \, (q)} \Bigg|
  \sum_{\substack{n \leq N \\ n \equiv a \, (q)}} \beta_n
  - \frac{1}{\varphi(q)} \sum_{\substack{n \leq N \\ (n,q) = 1}} \beta_n
  \Bigg|^2
 \ll \big(Q + N (\log 2N)^{-A} \big) N (\log 2N)^{2^j}
\label{e45BetaSumRes}
\end{equation}
for any $Q \geq 1$ and any $A \geq 1$ the implied constant depending only 
on $A$ and $j$.
\end{prop}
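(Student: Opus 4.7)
The plan is to follow the classical Barban–Davenport–Halberstam strategy: use orthogonality of Dirichlet characters to convert the discrepancy into character sums, reduce to primitive characters, and then split the range of conductors, handling large conductors by the multiplicative large sieve and small conductors by the Siegel–Walfisz hypothesis.

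\emph{Step 1 (character expansion).} For $(a,q)=1$ we expand
\[
 \sum_{\substack{n\leq N\\ n\equiv a\,(q)}} \beta_n - \frac{1}{\varphi(q)}\sum_{\substack{n\leq N\\ (n,q)=1}}\beta_n
 = \frac{1}{\varphi(q)}\sum_{\chi\neq \chi_0 \, (q)} \bar\chi(a)\, T(\chi),
 \qquad T(\chi):=\sum_{n\leq N}\chi(n)\beta_n.
\]
Parseval in $a$ gives $\sum^*_{a\,(q)}|\cdots|^2 = \varphi(q)^{-1}\sum_{\chi\neq\chi_0}|T(\chi)|^2$, so the quantity to bound becomes
\[
 \Sigma := \sum_{q\leq Q}\frac{1}{\varphi(q)}\sum_{\chi\neq\chi_0\,(q)}|T(\chi)|^2.
\]

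\emph{Step 2 (reduction to primitive characters).} Each non-principal $\chi \pmod q$ is induced from a unique primitive $\chi^* \pmod r$ with $1<r\mid q$. Writing $T(\chi)$ in terms of $T^*(\chi^*):=\sum_n \chi^*(n)\beta_n$ (the difference is supported on $n$ with $(n,q/r)>1$ and, by M\"obius inversion on $d\mid (n,q/r)$ together with the moment bound $\sum_{n\leq N}\tau(n)^{2j}\ll N(\log 2N)^{4^j-1}$, contributes a small admissible error), and using the elementary estimate $\sum_{r\mid q,\, q\leq Q}\varphi(q)^{-1}\ll \varphi(r)^{-1}\log Q$, one gets
\[
 \Sigma \ll (\log 2N)\sum_{1<r\leq Q}\frac{1}{\varphi(r)}\sideset{}{^*}\sum_{\chi\,(r)} |T^*(\chi)|^2 + \text{acceptable error}.
\]

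\emph{Step 3 (splitting at $R_0=(\log 2N)^{A+1}$).} For $r>R_0$, I apply the multiplicative large sieve inequality
\[
 \sum_{r\leq R}\frac{r}{\varphi(r)}\sideset{}{^*}\sum_{\chi\,(r)}|T^*(\chi)|^2 \ll (R^2+N)\sum_{n\leq N}|\beta_n|^2,
\]
combined with $\sum_n|\beta_n|^2\ll N(\log 2N)^{4^j-1}$; after dividing by $\varphi(r)\gg r/\log\log r$ and summing in $r$, this produces the $QN(\log 2N)^{2^j}$ part of the bound. For $r\leq R_0$, I invoke the S-W hypothesis: decomposing $\chi^*$ against indicators of reduced residue classes $b \pmod r$ and applying Cauchy's inequality, $|T^*(\chi)|^2 \leq r\sum^*_{b\,(r)}|S_b|^2$ where each $S_b$ is the discrepancy controlled by \eqref{e44SWcond}, so $|T^*(\chi)|^2\ll r^2 N^2(\log 2N)^{-2B}$. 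Summing over $r\leq R_0$ and primitive $\chi$ gives $\ll R_0^4 N^2(\log 2N)^{-2B}$, which is $\ll N^2(\log 2N)^{-A}$ on choosing $B=B(A,j)$ large. Adding the two regimes yields the claimed bound $(Q+N(\log 2N)^{-A})N(\log 2N)^{2^j}$.

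\emph{Main obstacle.} The delicate point is the reduction in Step~2: converting $T(\chi)$ to $T^*(\chi^*)$ introduces a sum over $n$ with $(n,q/r)>1$ that must not corrupt the final exponent $2^j$. This is standard (it appears e.g.\ in Chapter~17 of \cite{FI2} or in the treatment of the large sieve with moduli), but it requires writing the coprimality condition via $\sum_{d\mid(n,q/r)}\mu(d)$ and carefully estimating divisor moments; the exponent $2^j$ is exactly what this step can afford under the assumption $|\beta_n|\leq \tau(n)^j$. Everything else is a routine two-range split.
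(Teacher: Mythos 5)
Your argument---character orthogonality to turn the discrepancy into $\sum_\chi|T(\chi)|^2$, reduction to primitive characters, the multiplicative large sieve for large conductors, and the Siegel--Walfisz hypothesis for small ones---is the classical Barban--Davenport--Halberstam method, and this is precisely what Theorem~9.14 of~\cite{FI2}, which the paper cites as its proof of this proposition, employs; so the two routes coincide. One point worth flagging: your Step~3 asserts that the large-sieve regime ``produces the $QN(\log 2N)^{2^j}$ part of the bound,'' but with $\sum_{n\leq N}|\beta_n|^2 \leq \sum_{n\leq N}\tau(n)^{2j} \ll N(\log 2N)^{4^j-1}$ the large sieve actually delivers $QN(\log 2N)^{4^j-1}$, which exceeds $QN(\log 2N)^{2^j}$ for every $j\geq 1$. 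This is not a flaw in your calculation but in the exponent as printed in~\eqref{e45BetaSumRes}: taking $\beta_n=\tau(n)^j$ and $Q>2N$, the diagonal terms alone contribute $\gg QN(\log 2N)^{4^j-1}$ to the left-hand side, so the exponent $2^j$ should read $4^j-1$ (or some larger power); for the paper's purposes only the fact that it is a fixed power of $\log$ matters, so nothing downstream is affected.
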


Proposition~\ref{p41BetaSumRes} follows from Theorem 9.14 of~\cite{FI2}; 
see the end of its proof on p. 168 to get the above statement without any 
restriction on $Q$. Of course, the best use of~\eqref{e45BetaSumRes} is 
for $Q \ll N (\log 2N)^{-A}$.

In practice we often need~\eqref{e45BetaSumRes} for the subsequence of
 $(\beta_n)$ restricted by the co-primality condition $(n,r) = 1$. This 
can be deduced easily at the expense of losing a factor $\tau(r)^2$ in 
the bound~\eqref{e45BetaSumRes}. To this end we verify~\eqref{e44SWcond} 
for the subsequence by using the Möbius formula twice as follows:
\begin{align*}
 \sum_{(n,r)=1} \xi_n
 &= \sum_{\substack{d \mid r \\ d \leq y}} \mu(d) \sum_{n \equiv 0 \, (d)} \xi_n
   + O \Bigg( \sum_{\substack{d \mid r \\ d > y}} 
\left( \frac{N}{d} \sum_n |\xi_n|^2 \right)^{\frac{1}{2}} \Bigg)
\\
 &= \sum_{\substack{bc \mid r \\ bc \leq y}} \mu(b) \sum_{(n,b) = 1} \xi_n
   + O \Bigg( \tau(r) \left( \frac{N}{y} 
\sum_n | \xi_n |^2 \right)^{\frac{1}{2}} \Bigg).
\end{align*}
Here and below we assume that $n \leq N$. Hence, for $(r,k) = 1$ we get
\begin{multline*}
 \Delta(r)
 = \sum_{\substack{(n,r) = 1 \\ n \equiv l \, (k)}} \beta_n
  - \frac{1}{\varphi(k)} \sum_{(n,rk) = 1} \beta_n
 = \sum_{\substack{bc \mid r \\ bc \leq y}} \mu(b) \Bigg(
  \sum_{\substack{(n,b) = 1 \\ n \equiv l \, (k)}} \beta_n
  - \frac{1}{\varphi(k)} \sum_{(n,bk) = 1} \beta_n
  \Bigg)
\\
  + O \Big( \tau(r) y^{-\frac{1}{2}} N (\log 2N)^{2^j} \Big).
\end{multline*}
Splitting into primitive classes $n \equiv \beta \pmod{b}$ and 
applying~\eqref{e44SWcond} with $k$ replaced by $bk$ we get the bound
\begin{align*}
 \Delta(r)
 &\ll \sum_{\substack{bc \mid r \\ bc \leq y}} \varphi(b) N ( \log 2N )^{-4A}
   + \tau(r) y^{-\frac{1}{2}} N (\log 2N)^{2^j}
\\
 &\ll \tau(r) N \big( y (\log 2N)^{-4A} + y^{-\frac{1}{2}} (\log 2N)^{2^j}\big).
\end{align*}
Of course, the assumption $(r,k) = 1$ can be ignored by applying the result for the maximal divisor of $r$ which is co-prime with $k$. This leads to

\begin{cor} \label{cor411estBetaSW}
Suppose $\cB = ( \beta_n)$, with $|\beta_n| \leq \tau(n)^j$, satisfies the S-W condition in the segment $1 \leq n \leq N$. Then for any $r \geq 1$, $k \geq 1$, and $(l,k)=1$, we have
\begin{equation}
 \sum_{\substack{n \leq N \\ n \equiv l \, (k) \\ (n,r)=1}} \beta_n
  - \frac{1}{\varphi(k)} \sum_{\substack{n \leq N \\ (n,r)=1}} \beta_n
 \ll \tau(r) N (\log 2N)^{-A}
\end{equation}
with any $A \geq 1$, the implied constant depending only on $A$ and $j$.
\end{cor}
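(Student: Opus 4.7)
The plan is to derive the coprime-restricted S-W bound from~\eqref{e44SWcond} by peeling off the condition $(n,r)=1$ via a truncated M\"obius inversion, reducing the main term to instances of~\eqref{e44SWcond} at modulus $bk$ for various divisors $b$ of $r$. Write $\xi_n = \beta_n \mathbf{1}(n \leq N)\mathbf{1}(n \equiv l \pmod{k})$. Using $\mathbf{1}((n,r)=1) = \sum_{d \mid (n,r)} \mu(d)$ and truncating the divisor sum at a height $y$ to be chosen, one obtains
\[
 \sum_{(n,r)=1} \xi_n
  = \sum_{\substack{d \mid r \\ d \leq y}} \mu(d) \sum_{d \mid n} \xi_n + E(y),
\]
where the tail $E(y)$, coming from divisors $d>y$, is controlled by Cauchy--Schwarz combined with the moment bound $\sum_n |\beta_n|^2 \ll N(\log 2N)^{2^j}$ (a consequence of $|\beta_n| \leq \tau(n)^j$). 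Each inner sum has at most $N/d$ terms, so $E(y) \ll \tau(r)\, y^{-1/2} N(\log 2N)^{2^j}$.

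The difficulty with the main term is that $n \equiv 0 \pmod{d}$ is not a reduced class modulo $dk$, so~\eqref{e44SWcond} cannot be applied directly. A second M\"obius inversion, based on the identity
\[
 \sum_{d \mid n} \xi_n = \sum_{b \mid d} \mu(b) \sum_{(n,b)=1} \xi_n \qquad (d \text{ squarefree}),
\]
recasts the main term as a double sum over pairs $(b,c)$ with $bc \mid r$ and $bc \leq y$, whose inner sums have the form $\sum_{(n,b)=1} \xi_n$. Splitting these into primitive residue classes $\beta \pmod{b}$ with $(\beta,b)=1$ and combining with $n \equiv l \pmod{k}$ via CRT (after discarding the vanishing contributions with $(b,k)>1$) yields reduced classes $\beta^* \pmod{bk}$, and~\eqref{e44SWcond} with modulus $bk$ contributes an error $O(N(\log 2N)^{-4A})$ per class. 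Summing over the $\leq \varphi(b)$ primitive classes and over pairs with $bc\mid r$, $bc \leq y$ produces a total main-term error bounded by $O(\tau(r)\, y\, N(\log 2N)^{-4A})$.

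Choosing $y = (\log 2N)^{4A + 2^j}$ balances the two contributions at the required size $O(\tau(r) N(\log 2N)^{-A})$. The tacit assumption $(r,k)=1$ used in the CRT step is removed at the end by applying the estimate to the maximal divisor of $r$ coprime to $k$, at the cost of only a bounded multiple absorbed into $\tau(r)$. The main technical obstacle is precisely the second M\"obius step: without passing from $\sum_{d \mid n} \xi_n$ to $\sum_{(n,b)=1} \xi_n$-type sums one is stranded with non-reduced residue classes modulo $dk$, to which~\eqref{e44SWcond} is blind; everything else is routine bookkeeping.
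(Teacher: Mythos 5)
Your proof follows the paper's own argument step for step: the truncated M\"obius inversion to remove the constraint $(n,r)=1$, Cauchy--Schwarz on the $d>y$ tail, the second M\"obius inversion converting $\sum_{n\equiv 0(d)}\xi_n$ into sums $\sum_{(n,b)=1}\xi_n$ so that the S-W hypothesis becomes applicable after splitting into primitive classes modulo $bk$, and finally the reduction to $(r,k)=1$. The one slip is arithmetic: with the per-class S-W error $O(N(\log 2N)^{-4A})$, the choice $y=(\log 2N)^{4A+2^j}$ makes the main-term error $\tau(r)yN(\log 2N)^{-4A}=\tau(r)N(\log 2N)^{2^j}$, which is useless; one must instead take $y$ in the range $(\log 2N)^{2A+2^{j+1}}\ll y\ll(\log 2N)^{3A}$ (or equivalently apply S-W with a larger exponent), a routine fix that does not affect the structure of the argument.
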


Hence, indeed,~\eqref{e45BetaSumRes} holds for the sum restricted by $(n,r)=1$ with an extra factor $\tau(r)^2$ on the right side. 

To ease applications, we also introduce into~\eqref{e45BetaSumRes} the factor 
$\tau(q)^j$ at a minor expense. Specifically, applying Cauchy's inequality 
and the direct bound 
\[
 \sum_{q \leq Q} \tau(q)^{2j} \sumst_{a \, (q)} \Bigg( 
 \sum_{\substack{n \leq N \\ n \equiv a\,(q)}} \tau(n)^j
  + \frac{1}{\varphi(q)} \sum_{\substack{n \leq N \\ (n,q) = 1}} \tau(n)^j
  \Bigg)^2
\ll (Q+N)(\log 2QN)^{4^{j+1} }
\]

together with~\eqref{e45BetaSumRes}, we derive the following more practical 
result: 
\begin{cor} \label{cor42BdBetaSW}
If $(\beta_n)$ satisfies the S-W condition in the segment $1 \leq n \leq N$, 
then
\begin{align*}
 \sum_{q \leq Q} \tau(q)^j \sumst_{a \, (q)} & \Bigg| 
  \sum_{\substack{n \leq N \\ n \equiv a\,(q) \\ (n,r) = 1}} \beta_n
  - \frac{1}{\varphi(q)} \sum_{\substack{n \leq N \\ (n,qr) = 1}} \beta_n
  \Bigg|^2 \\
&  \ll \tau(r) \big( Q + \sqrt{QN} + N (\log 2N)^{-A} \big) 
N (\log 2QN)^{4^{j+1} }
\label{e4.7}
\end{align*}
for any positive integer $r$, any $Q \geq 1$, and any $A \geq 1$, the implied constant depending only on $A$ and $j$.
\end{cor}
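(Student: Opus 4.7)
The plan is to deduce the corollary by the device already sketched in the paragraph preceding its statement: a single application of Cauchy's inequality that splits the weighted square into one factor controlled by cancellation and another controlled trivially. Abbreviating the inner modulus by $X_q^a$, one writes
\[
 \sum_{q \leq Q} \tau(q)^j \sumst_a (X_q^a)^2
 \leq \Bigl( \sum_{q \leq Q} \tau(q)^{2j} \sumst_a (X_q^a)^2 \Bigr)^{1/2}
  \Bigl( \sum_{q \leq Q} \sumst_a (X_q^a)^2 \Bigr)^{1/2}.
\]

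For the second factor, I pass from $(\beta_n)$ to its subsequence subject to $(n,r)=1$. Corollary~\ref{cor411estBetaSW} certifies that this subsequence still satisfies the S-W condition at the cost of a factor $\tau(r)$, so Proposition~\ref{p41BetaSumRes} applied to it delivers a bound of order $\tau(r)^2(Q+N(\log 2N)^{-A'})N(\log 2N)^{2^j}$ for any $A'\geq 1$.

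For the first factor, I discard all cancellation and majorize $X_q^a$ by the sum of two nonnegative pieces, the partial sum $\sum_{n\equiv a\,(q)}\tau(n)^j$ and its expected value $\varphi(q)^{-1}\sum_{(n,q)=1}\tau(n)^j$. Squaring and applying Cauchy--Schwarz to the first piece gives $(N/q+1)\sum_{n\equiv a\,(q)}\tau(n)^{2j}$; summing over reduced residues $a\pmod q$ collapses this to $(N/q+1)\sum_{n\leq N}\tau(n)^{2j}$, which after summation over $q\leq Q$ weighted by $\tau(q)^{2j}$ produces a total of order $(Q+N)N(\log 2QN)^{c(j)}$ for some absolute exponent $c(j)$; the second piece is treated in the same way.

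Multiplying the two resulting estimates, taking the square root, and using
\[
 \sqrt{(Q+N)\bigl(Q+N(\log 2N)^{-A'}\bigr)}\leq Q+\sqrt{QN}+N(\log 2N)^{-A'/2}
\]
yields the asserted bound after relabelling $A'=2A$. The only real obstacle is the bookkeeping of logarithmic powers; the genuine arithmetic input lives entirely in Proposition~\ref{p41BetaSumRes} and Corollary~\ref{cor411estBetaSW}, and the cross term $\sqrt{QN}$ that distinguishes this corollary from Proposition~\ref{p41BetaSumRes} emerges precisely from the geometric mean above.
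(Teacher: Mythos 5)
Your proposal is correct and reproduces the paper's own argument: the paper derives the corollary by exactly this Cauchy--Schwarz split, using Corollary~\ref{cor411estBetaSW} together with Proposition~\ref{p41BetaSumRes} for the cancellation factor and the trivial majorization of $X_q^a$ for the $\tau(q)^{2j}$-weighted factor, with the $\sqrt{QN}$ term arising from the geometric mean. The only cosmetic difference is that you fill in the bookkeeping (the inner Cauchy--Schwarz producing $(N/q+1)$, the log powers, and the final inequality $\sqrt{(Q+N)(Q+N(\log 2N)^{-A'})}\ll Q+\sqrt{QN}+N(\log 2N)^{-A'/2}$) which the paper leaves implicit.
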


\begin{examps}
Obviously, the S-W condition holds for $\beta_n = f(n/N)$ where $f(x)$ is a continuous function compactly supported on $\R^+$. By the Prime Number Theorem the S-W condition holds for
\begin{equation}
 \beta_n
 = \mu(n) f(n/N).
\label{e4.7.1}
\end{equation}
The S-W condition is pretty flexible. Suppose that $\cB = (\beta_n)$ is constructed as the convolution of two cropped sequences $(\eta_r)$ 
with $|\eta_r| \leq \tau(r)^j$, and $(\gamma_m)$ with 
$|\gamma_m| \leq \tau(m)^j$. Specifically, with $R\geq 1$, $M\geq 1$, take 
\begin{equation}
 \beta_n
 = \sum_{\substack{rm = n \\ r \leq R, \, m \leq M}} \eta_r \gamma_m,
\label{e4.7.2}
\end{equation}
By Corollary~\ref{cor411estBetaSW} it follows that if $(\gamma_m)$ satisfies the S-W condition in the segment $1 \leq m \leq M$, then $(\beta_n)$ satisfies the S-W condition in the segment $1 \leq n \leq N = RM$, provided $\log M \gg \log 2N$. We have already used these observations along the lines of Case (C2) in the 
proof of Proposition\ref{p34LambdaSumRichEst}. 
\end{examps}

If $(\beta_n)$ satisfies the S-W condition in the segment $1 \leq n \leq N$, then it is natural to expect that the sum~\eqref{e41bilForm} is well-approximated by
\[
 \frac{1}{\varphi(q)}
 \mathop{\sum \sum}_{\substack{m \leq M, \, n \leq N \\ (mn,q) = 1}} \alpha_m \beta_n,
\]
so our goal is to estimate the difference. The challenge is to get meaningful results which are valid for moduli $q$ as large as possible. Detecting the congruence $m n \equiv a \pmod{q}$ by Dirichlet characters, and then applying the large sieve inequality produces a remarkable result (see Theorem 9.16 of~\cite{FI2}):

\begin{prop} \label{pEstBilFormMax}
Let $\cA = (\alpha_m)$, $\cB = (\beta_n)$ be sequences of complex numbers with $|\alpha_m| \leq \tau(m)^j$ and $|\beta_n| \leq \tau(n)^j$. Suppose $\cB$ satisfies the S-W condition in the segment $1 \leq n \leq N$. Then we have
\begin{equation}
 \sum_{q \leq Q} \max_{(a,q)=1} \Bigg|
  \mathop{\sum \sum}_{\substack{m \leq M, \, n \leq N \\ mn \equiv a \, (q)}} \alpha_m \beta_n
  - \frac{1}{\varphi(q)} \mathop{\sum \sum}_{\substack{m \leq M, \, n \leq N \\ (mn,q) = 1}} \alpha_m \beta_n
  \Bigg|
 \ll x (\log x)^{-A}
\label{e46bilFormCopQ}
\end{equation}
where $Q = x^{\frac{1}{2}} (\log x)^{-B}$, $x = MN \geq 2$, provided $M,N \geq x^\eps$ with $\eps > 0$. Here $\eps,A,j$ are any positive numbers, $B = B(A)$ depends on $A$, and the implied constant depends only on $\eps$, $A$, and $j$.
\end{prop}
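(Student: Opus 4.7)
The plan is to expand the congruence $mn\equiv a\pmod q$ using Dirichlet characters. For $(a,q)=1$,
\[
 \mathbf{1}_{mn\equiv a\,(q)} = \frac{1}{\varphi(q)}\sum_{\chi\,(\mo q)}\bar\chi(a)\chi(m)\chi(n),
\]
and the principal character $\chi_0$ contributes exactly the expected main term. The inner difference therefore equals $\varphi(q)^{-1}\sum_{\chi\neq\chi_0}\bar\chi(a)\,S_\alpha(\chi)\,S_\beta(\chi)$, with $S_\alpha(\chi)=\sum_{m\leq M}\alpha_m\chi(m)$ and $S_\beta(\chi)=\sum_{n\leq N}\beta_n\chi(n)$. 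Since $|\bar\chi(a)|\leq 1$, the maximum over $a$ is absorbed on passing to absolute values, reducing~\eqref{e46bilFormCopQ} to the bound
\[
 T := \sum_{q\leq Q}\frac{1}{\varphi(q)}\sum_{\chi\neq\chi_0\,(\mo q)}|S_\alpha(\chi)|\,|S_\beta(\chi)| \ll x(\log x)^{-A}.
\]

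Next I would parametrize each non-principal $\chi\pmod q$ by the primitive character $\chi^*$ of conductor $q^*>1$ that induces it, setting $r=q/q^*$. The coprimality conditions $(m,r)=(n,r)=1$ arising from $\chi(\cdot)=\chi^*(\cdot)[(\cdot,r)=1]$ are absorbed by M\"obius at the cost of factors $\tau(r)^{O(1)}$, and $\sum_{r\leq Q/q^*}1/\varphi(q^*r)\ll(\log Q)/\varphi(q^*)$ handles the $r$-summation. I then fix a threshold $R=(\log x)^{C_1}$ with $C_1$ large in terms of $A$, and split $T$ at $q^*=R$.

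For $1<q^*\leq R$, the S-W condition on $\cB$ applies: since $\chi^*\neq\chi_0$, orthogonality gives
\[
 S_\beta(\chi^*) = \sum_{l\,(\mo q^*)}\chi^*(l)\bigg(\sum_{n\equiv l\,(q^*)}\beta_n - \frac{1}{\varphi(q^*)}\sum_{(n,q^*)=1}\beta_n\bigg),
\]
so $|S_\beta(\chi^*)|\ll q^*N(\log N)^{-A'}$ for any $A'\geq 1$ by~\eqref{e44SWcond}, while $|S_\alpha(\chi^*)|\ll M(\log M)^{O(1)}$ trivially. Summing over primitive $\chi^*$ of conductor $q^*$ and then over $q^*\leq R$ contributes $\ll xR^2(\log x)^{-A'+O(1)}$, acceptable on choosing $A'$ large in terms of $A$ and $C_1$. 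For $R<q^*\leq Q$, I apply the primitive-character large sieve in dyadic boxes $q^*\sim Q_0$,
\[
 \sum_{q^*\sim Q_0}\frac{1}{\varphi(q^*)}\sum_{\chi^*\,\text{prim}}|S_\alpha(\chi^*)|^2 \ll \bigg(\frac{M}{Q_0}+Q_0\bigg)\|\alpha\|^2(\log x)^{O(1)},
\]
and the analogue for $\beta$. Cauchy--Schwarz in the $\chi^*$-sum, the bound $\|\alpha\|\,\|\beta\|\ll\sqrt{x}(\log x)^{O(1)}$, and $\sqrt{(M/Q_0+Q_0)(N/Q_0+Q_0)}\leq\sqrt{x}/Q_0+\sqrt{M}+\sqrt{N}+Q_0$ yield a per-dyadic contribution $\ll(\sqrt{x}/Q_0+\sqrt{M}+\sqrt{N}+Q_0)\sqrt{x}(\log x)^{O(1)}$. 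Dyadic summation over $R\leq Q_0\leq Q$ produces the four terms $x/R$, $\sqrt{Mx}$, $\sqrt{Nx}$, $Qx^{1/2}$, each with a polylog factor.

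The hypothesis $M,N\geq x^\eps$ forces $M,N\leq x^{1-\eps}$, so $\sqrt{Mx},\sqrt{Nx}\ll x^{1-\eps/2}$; the choice of $R$ gives $x/R\ll x(\log x)^{-C_1}$; and $Qx^{1/2}=x(\log x)^{-B}$ with $B=B(A)$ chosen sufficiently large. The main obstacle is this large-conductor regime: we sit exactly at the large-sieve threshold $Q^2\sim x$, so the only source of saving is the factor $(\log x)^{-B}$ built into $Q$, and the hypothesis $M,N\geq x^\eps$ is essential to force the off-diagonal pieces $\sqrt{Mx},\sqrt{Nx}$ to be small by a power of $x$ rather than merely by logarithms.
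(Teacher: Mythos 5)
Your reconstruction is correct and is exactly the strategy the paper invokes: the paper proves nothing here beyond citing Theorem~9.16 of \cite{FI2}, whose method it describes as detecting $mn\equiv a\pmod q$ by Dirichlet characters and then applying the large sieve, which is precisely your decomposition into $\chi_0$ (main term), small conductors (Siegel--Walfisz, via the formula $S_\beta(\chi^*)=\sum_{l}\chi^*(l)(\cdots)$), and large conductors (Cauchy--Schwarz plus the primitive-character large sieve over dyadic blocks). The only point you wave at rather than carry out is the removal of the coprimality conditions $(m,q)=(n,q)=1$ when passing from $\chi$ to the inducing primitive $\chi^*$; this does require a M\"obius expansion that replaces $\alpha$ by the family $\alpha^{(d)}_{m'}=\alpha_{dm'}$, $d\mid r$, followed by summing over $d,e\mid r$ against $1/\varphi([d,e])$, but the resulting double series converges to a polylog factor and produces exactly the four terms $x/R$, $\sqrt{Mx}$, $\sqrt{Nx}$, $Q\sqrt{x}$ you list, so the gap is purely notational.
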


\begin{rems}
Using the same tools, one can establish the more robust estimate
\begin{equation}
 \mathop{\sum \sum}_{\substack{q \leq Q, \, m \leq M \\ (q,m)=1}} \max_{(a,q)=1} \Bigg|
  \sum_{\substack{n \leq N \\ mn\equiv a \, (q)}} \beta_n
  - \frac{1}{\varphi(q)} \sum_{\substack{n \leq N \\ (n,q) =1}} \beta_n
  \Bigg|
 \ll x (\log x)^{-A},
\end{equation}
subject to the same conditions as for~\eqref{e46bilFormCopQ} with the extra 
condition $N \geq M$. \emph{Hint:} Relax the symbol $\max|*|$ by introducing 
factors $c_m(q) = \pm 1$ and follow the arguments in the proof of Theorem 
9.16 of~\cite{FI2}. Apply the mean value theorem for the character sums 
$\sum \chi(m) c_m(q)$ and the large sieve inequality for the character 
sums $\sum \chi(n) \beta_n$.
\smallskip

It is the use of Dirichlet characters in conjunction with the large sieve 
inequality that sets the limit $q \leq Q = x^\frac{1}{2} (\log x)^{-B}$ 
in~\eqref{e46bilFormCopQ}. In the works \cite{FoIw}, 
\cite{BFI1}, 
different methods were developed. 
These are based on the ideas of the dispersion method of Yu. V. Linnik~[L] 
and exponential sum bounds derived from the Riemann hypothesis for 
varieties and from the spectral theory of automorphic forms.  
This produced results for moduli $q$ significantly 
exceeding the barrier $x^{\frac{1}{2}}$,   
provided the residue class is fixed, that is,
 as $q$ varies $a$ does not move.
\smallskip

After the work of Goldston-Pintz-\Yil \cite{GPY}, it became desirable to have similar results with $a \pmod{q}$ varying in a specific fashion. Precisely, $a \pmod{q}$ is required to run over the roots of the polynomial
\begin{equation}
 Z(X)
 = \prod_{\substack{h' \in \cH \\ h' \neq h}} (X + h' - h), 
\label{e48Zdef}
\end{equation}
where $\cH$ is a fixed set of integers. Because $Z(X)$ is the product of linear polynomials, there would be no issue if the modulus $q = p$ was prime; simply $a$ runs over the finite set of numbers $h - h'$ which do not change with $p$. The problem becomes intricate for composite moduli $q$ since the residue class $a \pmod{q}$ can be very large. Yitang Zhang~\cite{Z} has shown how to deal with it by a clever application of the Chinese Remainder Theorem before opening the dispersion. His estimates hold only for special moduli (free of large prime divisors), but it is all that he needs to make a happy ending to this fascinating saga.

In the following sections we reproduce Zhang's estimates along his lines (which are reminiscent of the lines in our earlier works), but we proceed differently in details to enhance the presentation. Like Y. Zhang, we do not strive for the strongest quantitative results. In fact, we shall sacrifice strength to gain even a bit of clarity.
\end{rems}


\subsection{Bilinear forms over product moduli}

We are interested in the bilinear form \eqref{e41bilForm} to modulus~$q$ which is squarefree and admits a well-controlled factorization. We assume that
\begin{equation}
 q =
 rs \text{ squarefree},
\end{equation}
where $r,s$ run over the dyadic segments $R \leq r < 2R$, $S \leq s < 2S$ with $R \geq 1$, $S \geq 1$ to be restricted later. Therefore, $r,s$ are squarefree, co-prime and $q \asymp RS = Q$. By the Chinese Remainder Theorem one 
can cover uniquely all the reduced classes modulo $q$ by the 
pairs of reduced classes $a \pmod{r}$ and $b \pmod{s}$.

To every $s$ as above, 
we associate an arithmetic function $\gamma_s(b)$ such that
\begin{gather}
 \gamma_s(b) \text{ is periodic in } b \text{ of period } s,
\label{e410GammaPeriodic}
\\
 |\gamma_s(b)| \leq 1,
\qquad
 \gamma_s(b) = 0 \text{ if } (b,s) \neq 1,
\label{e411GammasDefSupp}
\\
 \sum_{b \,(\mo s)} |\gamma_s(b)|
 \leq \rho(s),
\label{e412GammasBd}
\end{gather}
where $\rho(s)$ is a multiplicative function bounded by $\tau(s)^j$ for some 
$j \geq 0$. Actually, we allow $\gamma_s(b)$ to depend on $r,a$. However, 
one should remember that $\gamma_s(b)$ are defined only if $rs$ is squarefree, 
a condition which will be often tacitly assumed in our exposition, 
but recalled only occasionally.

Given the squarefree number $r$ and $(a,r) = 1$, our goal is to evaluate the bilinear forms
\begin{equation}
 \mathop{\sum \sum}_{mn \equiv a \, (r)} \alpha_m \beta_n z(mn)
\label{e413bilGamma}
\end{equation}
for quite general coefficients $\alpha_m,\beta_n$ satisfying~\eqref{e42BilFormAlpBd}, \eqref{e43BilFormBetaBd}. Here the extra factor
\begin{equation}
 z(mn)
 = \sum_s \gamma_s(mn)
\label{e44defz}
\end{equation}
is not a new device; it appears also between the lines in~\cite{Z}, but not in such an abstract shape. Naturally one expects that~\eqref{e413bilGamma} is well-approximated by
\begin{equation}
 \frac{1}{\varphi(r)} \dsum_{(mn,r)=1} \alpha_m \beta_n z(mn),
\label{eBilFormZAppr}
\end{equation}
hence our goal is to estimate the difference. In fact, the coefficients $\alpha_m$ play only a structural function, but nothing particular. Therefore, we shall succeed by using only the upper bound~\eqref{e42BilFormAlpBd}. We are going to treat
\begin{equation}
 E(r,a)
 = \sum_{\substack{m \sim M \\ (m,r)=1}} \tau(m)^j \Bigg|
  \sum_{\substack{n \\ mn \equiv a \, (r)}} \beta_n z(mn)
  - \frac{1}{\varphi(r)} \sum_{\substack{n \\ (n,r)=1}} \beta_n z(mn)
  \Bigg|.
\label{e415defEra}
\end{equation}
Recall that the restriction $m \sim M$ means $M \leq m < 2M$. From now on we also assume that $\cB = (\beta_n)$ is supported on $N \leq n < 2N$.

We shall be obliged to perform an averaging of $E(r,a)$ over $r$ only in a few spots and to exploit the S-W condition for $\cB$ essentially to match the relevant (predicted) main terms. Here are the basic results:

\begin{prop} \label{pEstMaxE}
Let $R \geq 1$, $S \geq 1$, $M \geq 1$, $N \geq 2$. Suppose $|\beta_n| \leq \tau(n)^j$ and that $\cB = (\beta_n)$ satisfies the S-W condition in the interval $N \leq n < 2N$. Then we have
\begin{equation}
 \sum_{\substack{r \sim R\\ r \text{ squarefree}}} \tau(r)^j
  \max_{(a,r)=1} E(r,a)
 \ll x ( \log x )^{-A}
\label{eEstMaxEra}
\end{equation}
with $x = MN$, for any $A \geq 1$, subject to the following restrictions:
\begin{gather}
 (R+S) x^\eps < N < (RS)^{\frac{3}{2}},
\label{eEstECond1}
\\
 RN < x^{1-\eps}
\qquad \text{and} \qquad
 R^\frac{5}{4} S^\frac{11}{4} N^\frac{1}{2} < x^{1-\eps},
\\
 RSN < x^{1-\eps}
\qquad \text{or} \qquad
 R^\frac{5}{4} S^3 N^\frac{1}{2} < x^{1-\eps}.
\label{eEstECond3}
\end{gather}
Here $\eps$ is any positive number and the implied constant in~\eqref{eEstMaxEra} depends only on $\eps, A, j$.
\end{prop}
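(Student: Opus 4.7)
The approach is Linnik's dispersion method in the style of \cite{FoIw} and \cite{BFI1}, adapted to accommodate the weights $z(mn)$ and to exploit the presence of the smooth co-factor $s\sim S$. First, for each $r\sim R$ I would choose a reduced class $a_r$ attaining $\max_{(a,r)=1} E(r,a)$, replace the absolute value in \eqref{e415defEra} by a unimodular coefficient $\theta_m(r)$, and apply Cauchy--Schwarz in the pair $(r,m)$ to reduce the left side of \eqref{eEstMaxEra} to $(RM)^{1/2}(\log x)^{O(1)}\,\cD^{1/2}$, where
\[
 \cD \;=\; \sum_{r\sim R}\sum_{m\sim M}\bigl|S(r,m) - T(r,m)\bigr|^{2},
\]
with $S(r,m)$ the congruence sum of $\beta_n z(mn)$ over $n$ with $mn\equiv a_r\pmod{r}$ and $T(r,m)$ its predicted value $\varphi(r)^{-1}\sum_{(n,r)=1}\beta_n z(mn)$. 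We need $\cD\ll MN^{2}R^{-1}(\log x)^{-C}$, i.e.\ to beat the trivial bound by a factor $\asymp S^{2}$.

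Expanding $|S-T|^{2}$ opens each of the three cross terms into a sum $\sum_{n_1,n_2}\beta_{n_1}\bar\beta_{n_2}$ weighted by $z(mn_1)\bar z(mn_2) = \sum_{s_1,s_2\sim S}\gamma_{s_1}(mn_1)\gamma_{s_2}(mn_2)$ together with a condition on $m$. In $\sum_{r,m}|S|^{2}$ the constraints $mn_1\equiv mn_2\equiv a_r\pmod{r}$ with $(m,r)=1$ force $n_1\equiv n_2\pmod{r}$, which I would split into a diagonal $n_1=n_2$ and an off-diagonal $n_1\neq n_2$ with $n_1\equiv n_2\pmod{r}$. The diagonal combines with $\sum|T|^{2}$ and $-2\Re\sum S\bar T$ into a near-cancellation: this matching is driven by the S-W hypothesis on $\cB$ as packaged in Corollary~\ref{cor42BdBetaSW}, applied at moduli of size $rs_1s_2\ll RS^{2}$, which stays comfortably below $N(\log N)^{-C}$ by \eqref{eEstECond1}.

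The delicate part is the off-diagonal contribution. Write $n_2=n_1+\ell r$ with $1\leq|\ell|\ll N/R$; by the $s_i$-periodicity \eqref{e410GammaPeriodic}, the inner $m$-sum becomes a smooth sum over an arithmetic progression to modulus $[r,s_1,s_2]$. Poisson summation in~$m$ then converts this into an incomplete Kloosterman-type exponential sum to the composite modulus $[r,s_1,s_2]$, twisted by Fourier data of $\gamma_{s_1}\bar\gamma_{s_2}$. Since $r$ is squarefree and coprime to $s_1 s_2$, the Chinese Remainder Theorem separates these Kloosterman sums prime by prime, whereupon Weil's bound supplies square-root cancellation at each factor---the point, after Zhang, being that the smooth factor $s$ supplies precisely the flexibility needed to keep these sums non-degenerate. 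Bookkeeping of the $n_1,\ell,s_1,s_2$ averages produces four distinct error contributions of sizes, up to a factor $x^{\eps}$,
\[
 RN,\quad R^{5/4}S^{11/4}N^{1/2},\quad RSN,\quad R^{5/4}S^{3}N^{1/2},
\]
and the hypotheses \eqref{eEstECond1}--\eqref{eEstECond3} are tailored precisely so that each of these is $\ll x^{2-\eps'}$, yielding $\cD\ll MN^{2}R^{-1}(\log x)^{-C}$.

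The main obstacle is this exponential-sum step. The weights $\gamma_{s_i}(b)$ are arbitrary, so one cannot appeal to complete Kloosterman sum bounds directly; the $s_i$ variables must be pulled outside, Fourier-dualized, and handled either trivially (via $\sum|\gamma_{s}|\leq \rho(s)\leq\tau(s)^{j}$) or by a secondary Cauchy--Schwarz before Weil's bound applies to the surviving Kloosterman in the $r$-variable. Balancing these estimates across the different regimes of the parameters $R$, $S$, $M$, $N$ is what produces the mixed exponents $\tfrac{5}{4}$, $\tfrac{11}{4}$, and $3$ in \eqref{eEstECond1}--\eqref{eEstECond3}, and the "or" in \eqref{eEstECond3} reflects the two alternative ways of trading cancellation in the $r$-Kloosterman against averaging over the off-diagonal shift~$\ell$.
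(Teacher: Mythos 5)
Your plan is recognizably the paper's strategy---Cauchy--Schwarz twice to pass to the dispersion, expand the square into $\cD_1-2\cD_2+\cD_3$, match main terms via the S-W hypothesis, and estimate the error by Poisson summation and Weil's bound on Kloosterman sums---but there is a concrete mix-up in the main-term accounting that would derail the argument if followed as written. You assert that after forcing $n_1\equiv n_2\pmod r$ one should split off the literal diagonal $n_1=n_2$ and that it ``combines with $\sum|T|^{2}$ and $-2\Re\sum S\bar T$ into a near-cancellation.'' That matching is not correct. The paper's main term $\cD_{10}$, which cancels against $\cD_{20}^*=\cD_{30}^*$ coming from the cross term and $|T|^2$, is the \emph{zero Fourier mode} extracted by Poisson summation in $m$ to modulus $r[s,s']$, taken over \emph{all} pairs $n\equiv n'\pmod r$, not just the diagonal. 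The S-W condition enters (through Corollary~\ref{cor42BdBetaSW}) to control the fluctuation $\cE_k(r)$ of that zero-mode piece, while the literal diagonal $n=n'$ is simply absorbed into the nonzero-frequency error $\cD_{11}$ as the case $u=0$. Pairing $n_1=n_2$ directly against $|T|^2$ and $-2\Re S\bar T$ fails because those pieces have incompatible shapes: one has $\asymp N$ summands in $n$ and the others have $\asymp N^2$.

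Two smaller imprecisions worth flagging. First, to estimate $\cD_{11}$ one must perform a \emph{second} Poisson summation, in the $n$-variable, to modulus $rw$ with $w=[s,s',s_1,s_1']$; only then does the complete sum $V_\ell(rw)$ factor by CRT into a Kloosterman sum modulo $r$ (to which Weil applies) and a subsidiary sum $\cJ_\ell(w)$ over the smooth part, and the smooth-modulus flexibility is what supplies the final power savings. Second, the ``or'' in \eqref{eEstECond3} is not a trade between cancellation in the $r$-Kloosterman sum and averaging over the shift $\ell$; it records the two parallel estimations of $\cD_{11}$ (Sections~\ref{D11} and~\ref{D11'}), which differ only in whether the variable $s'=s_1'$ is held fixed, i.e.\ pulled outside the secondary Cauchy step. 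Fixing $s'$ lowers $w$ to $[s,s',s_1]$, trading a factor $S^{1/4}$ in one term for a factor $S^{1/2}$ in the other, and the minimum of the two resulting bounds produces exactly the alternative in \eqref{eEstECond3}.
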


\begin{cor} \label{corEstBilForm}
Let $\cA = (\alpha_m)$, $\cB = (\beta_n)$ be sequences of real numbers with
\begin{gather*}
 |\alpha_m| \leq \tau(m)^j,
  \qquad
 M \leq m < 2M,
  \qquad
 M \geq 1,
\\
 |\beta_n| \leq \tau(n)^j,
  \qquad
 N \leq n < 2N,
  \qquad
 N \geq 2,
\end{gather*}
and $\alpha_m = 0$, $\beta_n = 0$ elsewhere. Let $x = MN$, $Q = RS$ with $R \geq 1$, $S \geq 1$, and $z \geq 2$. Suppose the following restrictions hold:
\begin{gather}
 N \ll x^\frac{1}{2},
  \qquad
 z^{-1} N \ll x^\eps R \ll N
\label{eEstBilFormCondNx}
\\
 z Q \ll x^{\frac{9}{17}-\eps},
  \qquad
 z^\frac{3}{2} Q^\frac{11}{4} \ll N x^{1-3\eps}
\label{eEstBilFormCondzQ}
\end{gather}
with some $\eps > 0$ and some implied constants which may depend on $\eps$. Suppose $\cB = (\beta_n)$ satisfies the S-W condition in $N \leq n < 2N$. Then we have
\begin{equation}
\sum_{r\sim R}  \max_{(a,r)=1}
\sum_{\substack{s \sim S \\ rs \text{ squarefree}}} 
\max_{(b,s)=1} \left|
   \dsum_{\substack{mn \equiv a \, (r)\\mn \equiv b \, (s)}}\alpha_m \beta_n
   - \frac{1}{\varphi(rs)} \dsum_{(mn,rs)=1} \alpha_m \beta_n
   \right|
 \ll x (\log x)^{-A}
\label{eEstBilForm}
\end{equation}
for any $A \geq 1$, the implied constant depending only on $\eps,A,j$.
\end{cor}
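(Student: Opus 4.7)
The plan is to deduce Corollary~\ref{corEstBilForm} from Proposition~\ref{pEstMaxE} by introducing an auxiliary weight $z$ that packages the two inner maxima. For each $r \sim R$ fix $a_r$ with $(a_r, r) = 1$ attaining the outer maximum; for each $s \sim S$ with $rs$ squarefree fix $b_{r,s}$ with $(b_{r,s}, s) = 1$ attaining the inner maximum, together with $\eps_{r,s} = \pm 1$ chosen so that $\eps_{r,s}$ times the bracketed discrepancy $D(r, a_r, s, b_{r,s})$ is non-negative. Define (depending on the fixed $r$, $a_r$)
\[
 \gamma_s(v) = \eps_{r,s}\, \mathbf{1}_{v \equiv b_{r,s}\,(s)}\, \mathbf{1}_{(v, s) = 1},
\]
which satisfies~\eqref{e410GammaPeriodic}--\eqref{e412GammasBd} with $\rho(s) = 1$, and set $z(v) = \sum_s \gamma_s(v)$ (summed over $s \sim S$ with $rs$ squarefree). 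A Chinese Remainder computation, using that $(a_r, r) = (b_{r,s}, s) = 1$ forces $(mn, rs) = 1$ whenever both congruences hold, gives the decomposition
\[
 \sum_s \eps_{r,s} D(r, a_r, s, b_{r,s}) = E_1(r, a_r) + R(r, a_r),
\]
where
\[
 E_1(r, a) = \dsum_{mn \equiv a\,(r)} \alpha_m \beta_n z(mn) - \frac{1}{\varphi(r)} \dsum_{(mn, r) = 1} \alpha_m \beta_n z(mn)
\]
is a discrepancy of the type bounded by Proposition~\ref{pEstMaxE}, while
\[
 R(r, a) = \frac{1}{\varphi(r)} \sum_s \eps_{r,s} \Bigg( \dsum_{\substack{(mn, rs) = 1 \\ mn \equiv b_{r,s}\,(s)}} \alpha_m \beta_n - \frac{1}{\varphi(s)} \dsum_{(mn, rs) = 1} \alpha_m \beta_n \Bigg)
\]
is the secondary error from replacing $z(mn)$ by its uniform mean $\eps_{r,s}/\varphi(s)$ on each reduced class.

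Since $|\alpha_m| \leq \tau(m)^j$, one has $|E_1(r, a)| \leq E(r, a)$, so Proposition~\ref{pEstMaxE} yields $\sum_r |E_1(r, a_r)| \leq \sum_r \tau(r)^j \max_a E(r, a) \ll x(\log x)^{-A}$, provided~\eqref{eEstECond1}--\eqref{eEstECond3} hold. These follow from~\eqref{eEstBilFormCondNx}--\eqref{eEstBilFormCondzQ} by elementary manipulations: $R \ll N \ll x^{1/2}$ yields $RN \ll x^{1-\eps}$; the bound $R^{5/4} S^{11/4} N^{1/2} \ll x^{1-\eps}$ follows from $z^{3/2} Q^{11/4} \ll N x^{1-3\eps}$ via $R^{5/4} S^{11/4} = Q^{11/4} R^{-3/2}$ together with $R \gg z^{-1} N x^{-\eps}$; the separation $(R+S)x^\eps < N$ uses $S \leq Q/R \ll x^{9/17}/N$; and $N < (RS)^{3/2}$ reduces to $Q \gg N^{2/3}$, holding in the relevant ranges because $Q \geq R \gg z^{-1} N x^{-\eps}$ and $N \ll x^{1/2}$. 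The alternative in~\eqref{eEstECond3} is selected between $RSN < x^{1-\eps}$ and $R^{5/4} S^3 N^{1/2} < x^{1-\eps}$ according to the relative sizes of $R$ and $S$.

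For the residual, the bracketed expression in $R(r, a_r)$ is the Siegel--Walfisz-type error for $\beta$ in the class $b_{r,s}\bar m\,(s)$ with the extra restriction $(n, r) = 1$, weighted by $\alpha_m$. Pulling $|\alpha_m| \leq \tau(m)^j$ out to the front, applying Cauchy--Schwarz on $(r,s)$, and invoking the $L^2$ mean value bound of Corollary~\ref{cor42BdBetaSW} (which absorbs the coprimality restriction at cost $\tau(r)$) produces
\[
 \sum_r |R(r, a_r)| \ll M \bigl( S N^{1/2} + S^{1/2} N (\log N)^{-A/2} \bigr) (\log N)^{C}.
\]
The second term is negligible for $A$ large. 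For the first, the estimate $S \ll N^{1/2}$ (up to logarithms) is extracted from whichever branch of~\eqref{eEstECond3} is in force; e.g.\ $RSN \ll x^{1-\eps}$ combined with $R \gg z^{-1} N x^{-\eps}$ gives $S \ll zx/N^2$, while the second branch yields $S \ll z^{5/12} x^{1/3}/N^{7/12}$, each acceptable in its own regime. Combining with the $E_1$ estimate delivers~\eqref{eEstBilForm}.

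The main obstacle is purely one of bookkeeping: the compressed hypotheses~\eqref{eEstBilFormCondNx}--\eqref{eEstBilFormCondzQ} must be unpacked to verify both the elaborate system~\eqref{eEstECond1}--\eqref{eEstECond3} of Proposition~\ref{pEstMaxE} and the estimate $S \ll N^{1/2}$ needed to absorb the $SN^{1/2}$ term in the residual. The delicate point is the ``or'' in~\eqref{eEstECond3}, where the choice of branch depends on the relative scales of $R$, $S$, $N$, $z$, $x$. The construction of $\gamma_s$, the Chinese Remainder decomposition, and the bilinear $L^2$ input from Corollary~\ref{cor42BdBetaSW} are otherwise routine.
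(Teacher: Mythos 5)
Your overall strategy matches the paper's: the same choice of $\gamma_s(b)$ as a signed indicator of the maximizing residue $b_{r,s}$, the same decomposition into a discrepancy of the form $E(r,a)$ (bounded by Proposition~\ref{pEstMaxE}) plus a secondary error that comes from replacing $z(mn)$ by its ``mean'' $\sumst_{b(s)}\gamma_s(b)/\varphi(s)$ and which is disposed of by Cauchy--Schwarz and Corollary~\ref{cor42BdBetaSW}. This is exactly the paper's route.

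There is, however, a genuine gap in the verification of the hypotheses of Proposition~\ref{pEstMaxE}. The paper's first move is to dispose of the range $Q\leq x^{1/2}(\log x)^{-B}$ by applying Proposition~\ref{pEstBilFormMax}, so that afterwards one may assume $Q>x^{1/2}(\log x)^{-B}$. That extra lower bound is what makes conditions~\eqref{eEstECond1}--\eqref{eEstECond3} follow from~\eqref{eEstBilFormCondNx}--\eqref{eEstBilFormCondzQ}; in particular, from $z^{3/2}Q^{11/4}\ll Nx^{1-3\eps}$ and $Q\gg x^{1/2}(\log x)^{-B}$ one extracts a lower bound of shape $N\gg x^{3/8+3\eps}(\log x)^{-O(1)}$, which is what actually delivers $(R+S)x^\eps<N$, $N<(RS)^{3/2}$, and the $S\ll N^{1/2}$, $S\ll M^{1/2}$ needed to absorb the residual term. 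Your claimed derivations are not correct without that reduction: e.g.\ you assert that $N<(RS)^{3/2}$ ``holds because $Q\geq R\gg z^{-1}Nx^{-\eps}$ and $N\ll x^{1/2}$,'' but $z^{-1}Nx^{-\eps}>N^{2/3}$ requires $N>z^3x^{3\eps}$, which the stated hypotheses do not give (nothing prevents $z^{-1}N$ from being $O(1)$). Likewise the bound on the separation $(R+S)x^\eps<N$, and the cleanup of the $SN^{1/2}$ term in your residual estimate, all quietly require $N$ to be of at least a fixed positive power of $x$ beyond the weak $N\gg x^\eps$ that the hypotheses alone imply. Inserting the preliminary reduction to $Q>x^{1/2}(\log x)^{-B}$ (justified by Proposition~\ref{pEstBilFormMax}, which applies since $M,N\geq x^\eps$) repairs the argument and brings it into line with the paper's proof.
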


\begin{proof}[Proof of Corollary~\ref{corEstBilForm}]
First observe that we can assume 
\begin{equation}
 Q = RS > x^\frac{1}{2} (\log x)^{-B}
\label{eCorEstBilQRS}
\end{equation}
for some $B = B(A) > 0$ by applying Proposition~\ref{pEstBilFormMax} in the opposite case (this requires $M,N \geq x^\eps$, which hold due to~\eqref{eEstBilFormCondNx}).

The point $b$ at which the maximum is attained depends on $r$, $a$, and $s$. Given $r$ squarefree, $r \sim R$, $(a,r)=1$ and $s$ squarefree, $s \sim S$, $(r,s)=1$, we define the function $\gamma(b)$ in $b \pmod{s}$ by $\gamma(b) = \pm 1$ if $b$ 
 maximizes the absolute value among $(b,s)=1$ and we put $\gamma(b) = 0$ otherwise. Here $\pm 1$ is the sign of the innermost difference. Obviously $\gamma(b) = \gamma_s(b) = \gamma_s(b;r,a)$ satisfies the conditions~\eqref{e410GammaPeriodic}-\eqref{e412GammasBd} with $\rho(s) = 1$. Now we can write the above  
sum in the following form: 
\[
 \sum_{r \sim R} \max_{(a,r)=1} \sum_{s \sim S} \left(
  \dsum_{mn \equiv a \, (r)} \alpha_m \beta_n \gamma_s(mn)
  - \frac{1}{\varphi(rs)} \dsum_{(mn,rs)=1} \alpha_m \beta_n
 \sumst_{b \, (s)} \gamma_s(b)  \right).
\]
Changing the order of summation we estimate the sum~\eqref{eEstBilForm} by
\begin{equation}
 \sum_{r \sim R} \max_{(a,r)=1} \sum_{\substack{m \sim M \\ (m,r)=1}} \tau(m)^j \left|
  \sum_{mn \equiv a \, (r)} \beta_n z(mn)
  - \frac{1}{\varphi(r)} \sum_{(n,r)=1} \beta_n \sum_{(s,mn)=1} \frac{1}{\varphi(s)}
   \sumst_{b \, (s)} \gamma_s(b)
  \right|.
\label{eEstBilFormStep}
\end{equation}
Recall our convention that $\gamma_s(b)$ is defined only if $rs$ is 
squarefree, so~\eqref{eEstBilFormStep} has the hidden condition $(r,s)=1$.

We are approaching the sum~\eqref{e415defEra}. The main term in~\eqref{eEstBilFormStep} is not exactly the one we have assumed in~\eqref{e415defEra}, but it is very close. Indeed, if we write
\[
 \sumst_{b \, (s)} \gamma_s(b)
 = \sumst_{b \, (s)} \gamma_s(mb)
\]
in~\eqref{eEstBilFormStep} and go back to~\eqref{e44defz} for $z(mn)$ in the main term of~\eqref{e415defEra}, then it is clear that the difference between the main terms is
\[
 \frac{1}{\varphi(r)} \sum_s 
 \sumst_{b \, (s)} \gamma_s(mb) \Bigg(
  \sum_{\substack{(n,r)=1 \\ n \equiv b \, (s)}} \beta_n
  - \frac{1}{\varphi(s)} \sum_{(n,rs)=1} \beta_n
  \Bigg).
\]
This difference contributes to~\eqref{eEstBilFormStep} at most
\begin{multline*}
 \Delta(R,S,M,N)
 =
\\
 \sum_r \frac{1}{\varphi(r)} \max_{(a,r)=1}
  \sum_s \sumst_{b \, (s)} \left( \sum_m \tau(m)^j |\gamma_s(mb)| \right) \Bigg|
    \sum_{\substack{(n,r)=1 \\ n \equiv b \,(s)}} \beta_n
    - \frac{1}{\varphi(s)} \sum_{(n,rs)=1} \beta_n
   \Bigg|.
\end{multline*}
Note that
\begin{align*}
 \sum_s \sumst_{b \, (s)} \left( \sum_m \tau(m)^j |\gamma_s(mb)| \right)^2
 &\leq \sum_s \sum_{m' \equiv m \, (s)} \big( \tau(m') \tau(m) \big)^j
\\
 &\ll M^2 (\log 2M)^{4^j} ,
\end{align*}
because $s \sim S$ and $S \ll M^\frac{1}{2}$ by the conditions~\eqref{eEstBilFormCondNx}, \eqref{eEstBilFormCondzQ}. Hence, by Cauchy's inequality
\[
 \Delta(R,S,M,N)^2
 \ll M^2 (\log 2M)^{4^j} \sum_r \frac{1}{\varphi(r)} \Delta_r(S,N),
\]
where
\[
 \Delta_r(S,N)
= \sum_{s \sim S} \sumst_{b \, (s)} \Bigg|
  \sum_{\substack{(n,r)=1 \\ n \equiv b \, (s)}} \beta_n
  - \frac{1}{\varphi(s)} \sum_{(n,rs)=1} \beta_n
  \Bigg|^2
\ll \tau(r)^2 N^2 (\log N)^{-A}
\]
by Corollary~\ref{cor42BdBetaSW} (note that $S \ll N^\frac{1}{2}$ by the conditions~\eqref{eEstBilFormCondNx}, \eqref{eEstBilFormCondzQ}). This shows that
\begin{equation}
 \Delta(R,S,M,N)
 \ll MN (\log N)^{-A},
\end{equation}
which is admissible for~\eqref{eEstBilForm}.

To complete the proof of Corollary~\ref{corEstBilForm} it remains to check 
that the conditions~\eqref{eEstBilFormCondNx},~\eqref{eEstBilFormCondzQ} 
imply the conditions~\eqref{eEstECond1}-\eqref{eEstECond3} 
so that~\eqref{eEstMaxEra} can be applied (the $\eps$ can be different in 
the two sets of conditions). The task is not hard, so we leave it for 
the reader (see~\eqref{eCorEstBilQRS}).
\end{proof}

\ignore{
}


\section{\bf{The Dispersion Method}}
\label{c5disp}

\subsection{Statement of results}

We shall prove Proposition~\ref{pEstMaxE} by the dispersion method. First, 
applying Cauchy's inequality to~\eqref{e415defEra} we get
\begin{equation}
 E^2(r,a)
 \ll \cD(r,a) M (\log M)^{4^j},
\end{equation}
where
\begin{equation}
 \cD(r,a)
 = \sum_{(m,r)=1} f \left( \frac{m}{M} \right) \left|
  \sum_{mn \equiv a \,(r)} \beta_n z(mn)
  - \frac{1}{\varphi(r)} \sum_{(n,r)=1} \beta_n z(mn)
  \right|^2
\label{eDispDef}
\end{equation}
and $f$ is a smooth function compactly supported on~$\R^+$.

We shall be able to handle the dispersion~$\cD(r,a)$ for any single class 
$a \pmod{r}$, except for matching the main terms. Therefore, we state the 
results in two stages.

\begin{prop} \label{p43DispBdE}
Let $r$ be squarefree, $(a,r)=1$. Suppose $S \geq 1$, $M \geq 1$, $N \geq 2$, 
and
\begin{equation}
 (r+S) x^\eps
 \ll N
 \ll (rS)^{\frac{3}{2}},
  \qquad
 \text{with } x = MN.
\label{e5.3}
\end{equation}
Then, we have
\begin{multline}
 \cD(r,a)
 \ll \frac Mr \sum_{k<2S}\frac{\tau(k)^j}{k}\bigg(\cE_k(r)+\frac{1}{\varphi(r)}
\cE_{kr}(1)\bigg) (\log N)^{2^j}  
\\
  + \{ r^{-\frac{3}{2}} M^\frac{1}{2} N^2 
+ r^{-\frac{3}{4}} S^\frac{11}{4} N^\frac{3}{2} \} x^\eps 
  + \min \{r^{-\frac{3}{2}} S^\frac{1}{2} M^\frac{1}{2} N^2, r^{-\frac{3}{4}} S^3 N^\frac{3}{2} \} x^\eps , 
\label{eDispBd}
\end{multline}
where 
\begin{equation}
 \cE_k(r) = \sum_{c<2S}\frac{\tau(c)^j}{c}\sumst_{\nu (cr)}
\bigg(\sum_{\substack{(n,k)=1\\ n\equiv \nu (cr)}} \beta_n 
- \frac{1}{\varphi (cr)}\sum_{(n,ckr)=1}\beta_n\bigg)^2
\end{equation}
with implied constants depending on $\eps$ and $j$. Moreover, 
if the sequence $(\beta_n)$ satisfies the S-W condition in the segmant 
$N\le n <2N$, then
\begin{equation}
 \sum_{r < R} \cE_k(r)
 \ll \tau(k)N^2(\log x)^{-A}
\label{eDispErrBd}
\end{equation}
for $R\ll Nx^{-\varepsilon}$, with any $A \geq 1$, the implied constant 
depending on $A$, $\eps$, and $j$.
\end{prop}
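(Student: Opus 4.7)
The natural tool is Linnik's dispersion method, used here in the spirit of~\cite{FoIw} and~\cite{BFI1} but with the modulus factorization of~\cite{Z} playing the decisive role. Expanding the square in~\eqref{eDispDef} yields
\[
 \cD(r,a) \;=\; T_1(r,a) \,-\, 2\,\mathrm{Re}\,T_2(r,a) \,+\, T_3(r,a),
\]
where $T_1$ is the diagonal bilinear form
\[
 T_1 \;=\; \dsum_{n_1,n_2}\beta_{n_1}\overline{\beta_{n_2}}\dsum_{s_1,s_2<2S}\;\sum_{\substack{(m,r)=1\\ mn_1\equiv a\,(r)\\ mn_2\equiv a\,(r)}} f\!\left(\frac{m}{M}\right)\gamma_{s_1}(mn_1)\overline{\gamma_{s_2}(mn_2)},
\]
and $T_2,T_3$ are the analogues with one or both congruences on $n_i$ replaced by the $\varphi(r)^{-1}$-weighted coprimality averaging. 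The pair congruence forces $n_1\equiv n_2\pmod r$, and the weight $\gamma_{s_1}(mn_1)\overline{\gamma_{s_2}(mn_2)}$ is periodic in $m$ modulo $q=r[s_1,s_2]/(r,[s_1,s_2])$, so each inner $m$-sum is a smooth count in a single arithmetic progression, to which Poisson summation applies.

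First I would process the zero-frequency ($h=0$) contributions from Poisson in the three pieces $T_1,T_2,T_3$ in parallel; they are designed so that the smooth mean parts cancel, leaving a weighted second moment of the local discrepancy of $(\beta_n)$ over residue classes modulo $cr$, where $c$ plays the role of $[s_1,s_2]/\gcd$. Splitting the coprimality constraints via M\"obius on a small auxiliary divisor $k$ (arising because $(n,s_1s_2)=1$ for some factors and $(n,r)=1$ for others) recasts this moment as exactly the quantities $\cE_k(r)$ and $\varphi(r)^{-1}\cE_{kr}(1)$ in~\eqref{eDispBd}; the prefactor $M/r$ records the density of $m$ in its progression, and the $(\log N)^{2^j}$ factor absorbs $\sum_{s_1,s_2}\rho(s_1)\rho(s_2)/[s_1,s_2]$ via~\eqref{e412GammasBd}.

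Next I would treat the nonzero frequencies, which after Poisson produce Kloosterman-type incomplete exponential sums to modulus $q$. The key leverage, due to~\cite{Z}, is the factorization $r=r_1r_2$ available since $r$ is squarefree: by the Chinese Remainder Theorem the sum splits, and the Weil bound applied to the smaller factor reduces the effective conductor and gives a genuine saving. Combining this with a Cauchy--Schwarz over pairs $(n_1,n_2)$ with $n_1\equiv n_2\pmod r$, the Poisson tail range $|h|\ll qx^\eps/M$, and $\sum_{s<2S}\rho(s)\ll S(\log x)^j$, delivers the error
\[
 \bigl\{r^{-3/2}M^{1/2}N^2+r^{-3/4}S^{11/4}N^{3/2}\bigr\}x^\eps.
\]
The ``$\min$'' term reflects an alternative arrangement in which the pair $(s_1,s_2)$ is summed trivially before applying Cauchy--Schwarz; depending on the relative sizes of $r,S,M,N$ one arrangement wins, and both are recorded by taking the minimum.

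Finally, for the moment bound~\eqref{eDispErrBd}, I would remove the condition $(n,k)=1$ from $\cE_k(r)$ by M\"obius on $d\mid k$, reducing to second moments of the thinned sequences $(\beta_{dn'})_{n'}$ (which still satisfy the S-W condition with $j$-dependent constants), and then apply Corollary~\ref{cor42BdBetaSW} to modulus $cr$ summed over $c<2S$ and $r<R$; the product $cr$ stays below $2RS\ll N(\log N)^{-A}$ since $R\ll Nx^{-\eps}$ and $S\ll N^{1/2}$ is implicit in~\eqref{e5.3}, so the square-root term of Corollary~\ref{cor42BdBetaSW} is absorbed by the S-W error. The hard part will be Step~3: isolating the degenerate frequencies with $(h,q)$ large and the diagonal pairs $n_1=n_2$, confirming that the diagonal pairs have already been accounted for inside $\cE_k(r)$, and choosing the factorization $r=r_1r_2$ uniformly so that every surviving Kloosterman sum admits the Weil bound cleanly; the bookkeeping required to turn this into the symmetric bound~\eqref{eDispBd} is the crux of the argument.
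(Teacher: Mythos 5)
Your outline of the zero--frequency analysis and of the deduction of~\eqref{eDispErrBd} is broadly consonant with the paper's argument: the square is opened as $\cD_1-2\cD_2+\cD_3$, Poisson in $m$ to modulus $r[s,s']$ produces matching main terms which cancel, and the residual zero--frequency piece becomes a weighted second moment over progressions to moduli $cr$ with $c=(s,s')$, which (after writing $s=ck$, $s'=cl$) is exactly $\cE_k(r)$; summing $\cE_k(r)$ over $r<R$ via Corollary~\ref{cor42BdBetaSW} gives~\eqref{eDispErrBd}, as you say.

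The genuine gap is in your account of the nonzero frequencies. You assert that the key input is a further factorization $r=r_1r_2$ of the dispersion modulus, supposedly available because $r$ is squarefree, and you flag this as the delicate step. But no such factorization occurs in the proof of this Proposition, nor could it give anything: $r$ appears with a single residue class $a\pmod r$ fixed, and splitting $r$ would not change the Weil bound $r^{1/2+\eps}$ obtained for the Kloosterman sum to modulus~$r$. What the paper actually does (Sections~\ref{D11} and~\ref{D11'}) is the following. After Cauchy--Schwarz in the shift $u=n'-n$ (with $u\equiv 0\,(r)$) and a second Poisson in $n$, one arrives at a complete exponential sum to modulus $rw$ with $w=[s,s',s_1,s_1']$. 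By CRT this factors as a Kloosterman sum modulo $r$ times a sum modulo $w$; the saving comes because $w$ is built entirely from the $s$--variables and is therefore only of size $O(S^4)$ (or $O(S^3)$ in the second arrangement, where only $s'$ is held outside the Cauchy--Schwarz, not the whole pair as you suggest). The Zhang factorization $q=rs$ is thus ``spent'' \emph{upstream}, in passing from $q$ to the weight $z(mn)=\sum_s\gamma_s(mn)$ in~\eqref{e44defz} and in the freedom to choose $R,S$ in Corollary~\ref{corEstBilForm}; inside the dispersion itself no factorization of $r$ is performed, and the CRT split of $rw$ into $r$ and $w$ is entirely classical. The two terms in the $\min$ of~\eqref{eDispBd} are the outputs of the two Cauchy arrangements ($s'$ inside versus $s'$ outside), not of two choices of factoring $r$. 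As written, your Step~3 therefore does not have a correct mechanism to produce the stated exponents, and the worry you voice about ``choosing the factorization $r=r_1r_2$ uniformly'' is aimed at an obstruction that is not actually present.
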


Now we can derive Proposition~\ref{pEstMaxE} from Proposition~\ref{p43DispBdE}. By Cauchy's inequality, the square of the sum~\eqref{eEstMaxEra} is bounded by
\[
 RM (\log R)^{4^j} (\log M)^{4^j} \sum_{r \sim R} \max_{(a,r)=1} \cD(r,a).
\]
Applying~\eqref{eDispBd} and~\eqref{eDispErrBd} to the above, we find
\begin{multline*}
 RMR^{-1} N x (\log x)^{-A}
 + RM \{ R^\frac{1}{2} M^\frac{1}{2} N^2 + R^\frac{1}{4} S^\frac{11}{4} N^\frac{3}{2} \} x^\eps
\\
 + RM \min \{ R^{-\frac{1}{2}} S^\frac{1}{2} M^\frac{1}{2} N^2, \, R^\frac{1}{4} S^3 N^\frac{3}{2} \} x^\eps
 \ll x^2 (\log x)^{-2A}
\end{multline*}
subject to the conditions~\eqref{eEstECond1}-\eqref{eEstECond3}. This proves~\eqref{eEstMaxEra}.

Now we proceed to the proof of Proposition~\ref{p43DispBdE}, which is 
the gist of this project. We open the square in~\eqref{eDispDef} 
and arrange the dispersion into three parts:
\begin{equation}
 \cD(r,a)
 = \cD_1 - 2\cD_2 + \cD_3
\label{eDispSplitD123}
\end{equation}
with the purpose of evaluating each part asymptotically so that the resulting main terms match and cancel out.

\ignore{
}


\subsection{Evaluation of $\cD_1$}
\label{D1}

We start from $\cD_1$, because it is the hardest part and because the other two parts $\cD_2,\cD_3$ will make use of the arguments which appear in our treatment of $\cD_1$. We have
\begin{equation}
\begin{aligned}
 \cD_1
 &= \sum_{(m,r)=1} f \left(\frac{m}{M}\right) \left(
  \sum_{mn \equiv a \, (r)} \beta_n \left( \sum_s \gamma_s(mn) \right)
  \right)^2
\\
 &= \mathop{\sum \sum}_{n \equiv n' \, (r)} \beta_n \beta_{n'}
   \sum_s \sum_{s'} \sum_{mn \equiv a \,(r)}
    f\left(\frac{m}{M}\right) \gamma_s(mn) \gamma_{s'}(mn').
\end{aligned}
\end{equation}
Note that the coprimality conditions $(m,rss') = (n,rs) = (n',rs') = 1$ and $(ss',r)=1$ are redundant with our conditions imposed on $\gamma_s(n)$. We split the summation over $m$ into residue classes
\[
 \mu
 \equiv a \cl{n [s,s']} [s,s']
  + \omega \cl{r} r
  \pmod{r [s,s']},
\]
where $\omega$ runs over the reduced classes (mod $[s,s']$), and for each class we apply the Poisson formula
\[
 \sum_{m \equiv \mu \, (r [s,s'])} f(m/M)
 = \frac{M}{r [s,s']}
  \sum_h \hat{f} \left( \frac{hM}{r [s,s']} \right) \e \left(\frac{-\mu h}{r [s,s']}\right)
\]
giving
\begin{multline}
 \cD_1
 = \frac{M}{r} \dsum_{n \equiv n' \, (r)} \beta_n \beta_{n'} \sum_s \sum_{s'} \sum_{\omega \, ([s,s'])}
  \gamma_s(\omega n) \gamma_{s'} (\omega n') \big/ [s,s']
\\
 \cdot \sum_h \hat{f} \left( \frac{hM}{r [s,s']} \right) \e \left(-ah \frac{\cl{n[s,s']}}{r} - \frac{\omega h \cl{r}}{[s,s']}\right)
\label{e436D1byPoisson}
\end{multline}
The main term will emerge from the zero frequency, and we denote its contribution by
\begin{equation}
 \cD_{10}
 = \hat{f}(0) \frac{M}{r}
  \dsum_{n \equiv n' \, (r)} \beta_n \beta_{n'}
  \sum_s \sum_{s'} \frac{1}{[s,s']}
  \sum_{\omega \, (\mo [s,s'])} \gamma_s(\omega n) \gamma_{s'}(\omega n').
\end{equation}
We denote by $\cD_{11}$ the contribution of the terms with $h \neq 0$ to~\eqref{e436D1byPoisson}, so we have
\begin{equation}
 \cD_1
 = \cD_{10} + \cD_{11}.
\end{equation}

The complete sum over $\omega \pmod{[s,s']}$ in $\cD_{10}$ depends only on the residue class of $n' \cl{n}$ modulo
\begin{equation}
 c
 = (s,s').
\label{edefc}
\end{equation}
Precisely, if $(n,s)= (n',s') = 1$ and $n' \equiv e n \pmod{c}$, then
\begin{equation}
 \sum_{\omega \, (\mo [s,s'])}
  \gamma_s(\omega n) \gamma_{s'}(\omega n')
 = \sum_{\omega \, (\mo [s,s'])}
  \gamma_s(\omega) \gamma_{s'} (\omega e) = [s,s']\sigma_{s,s'}(e), 
\label{e440gammasbyn'e}
\end{equation} 
the last equation being the definition of $\sigma_{s,s'}(e)$. 

\begin{proof}[Proof of \eqref{e440gammasbyn'e}]
This is an easy exercise in using the periodicity of $\gamma_s(b)$. Write $\omega = \alpha s' c^{-1} + \beta s$ with $\alpha \pmod{s}$ and $\beta \pmod{s' c^{-1}}$ being primitive classes. Then, we have
\begin{align*}
 \gamma_s(\omega n) \gamma_{s'} (\omega n')
 &= \gamma_s(\alpha s' e^{-1} n) \gamma_{s'} \big( (\alpha s' c^{-1} + \beta s ) n' \big)
\\
 &= \gamma_s(\alpha s' c^{-1} n) \gamma_{s'} (\alpha s' c^{-1} e n + \beta s n').
\end{align*}
Here, summing over $\beta$ we can replace $n'$ by $en$. This proves~\eqref{e440gammasbyn'e}.
\end{proof}

Note that
\begin{equation}
 [s,s'] \sum_{e \, (c)} \sigma_{s,s'}(e)
 = \gamma(s) \gamma(s') ,
\label{e442SigmaSum}
\end{equation}
where $c$ and $\sigma_{s,s'}(e)$ are defined 
by~\eqref{edefc},~\eqref{e440gammasbyn'e} respectively, and 
\begin{equation}
 \gamma(s)
 = \sum_{\omega\,(s)} \gamma_s(\omega).
\end{equation}

By~\eqref{e440gammasbyn'e} we get
\[
 \cD_{10}
 = \hat{f}(0) \frac{M}{r} \sum_s \sum_{s'} \sum_{e \, (c)}
  \sigma_{s,s'} (e) \dsum_{\substack{n' \equiv n \, (r) \\ n' \equiv en \, (c)}} \beta_n \beta_{n'}.
\]
Do not forget the hidden coprimality conditions $(n,rs) = (n',rs') = 1$. The sum of $\beta_n \beta_{n'}$ can be arranged as follows:
\begin{multline*}
 \dsum_{\substack{n' \equiv n \, (r) \\ n' \equiv en \, (c)}} \beta_n \beta_{n'}
 = \frac{1}{\varphi(rc)} \left( \sum_{(n,rs)=1} \beta_n \right) \left( \sum_{(n',rs')=1} \beta_{n'} \right)
\\
  + \sumst_{u \, (r)} \sumst_{v \, (c)} \Bigg(
   \sum_{\substack{n \equiv u \, (r) \\ n \equiv v \, (c)}} \beta_n
   - \frac{1}{\varphi(rc)} \sum_{(n,rs)=1} \beta_n
   \Bigg)
\Bigg(
   \sum_{\substack{n' \equiv u \, (r) \\ n' \equiv ev \, (c)}} \beta_{n'}
   - \frac{1}{\varphi(rc)} \sum_{(n',rs') = 1} \beta_{n'}
   \Bigg).
\end{multline*} 
Hence, we obtain
\begin{equation}
 \cD_{10}
 = \cD^*_{10} + \cD^+_{10},
\end{equation}
where $\cD^*_{10}$ is the main term
\begin{equation}
 \cD^*_{10}
 = \hat{f}(0) \frac{M}{r \varphi(r)} \sum_s \sum_{s'} \frac{\gamma(s) \gamma(s')}{\varphi(c) [s,s']}
  \left( \sum_{(n,rs)=1} \beta_n \right)
  \left( \sum_{(n',rs')=1} \beta_{n'} \right),
\label{e445D10*def}
\end{equation}
and the remaining part is given by
\begin{multline}
 \cD^+_{10}
 = \hat{f}(0) \frac{M}{r} \sumst_{u \, (r)} \sum_s \sum_{s'} 
\sumst_{v \, (c)} \sumst_{v' \, (c)}
  \sigma_{s,s'}(v/v')
\\
  \cdot \Bigg( \sum_{\substack{n \equiv u \, (r) \\ n \equiv v \, (c) \\ (n,rs)=1}} \beta_n
   - \frac{1}{\varphi(rc)} \sum_{(n,rs) = 1} \beta_n
   \Bigg)
  \Bigg( \sum_{\substack{n \equiv u \, (r) \\ n \equiv v' \, (c) \\ (n,rs')=1}} \beta_n
   - \frac{1}{\varphi(rc)} \sum_{(n,rs') = 1} \beta_n
   \Bigg). 
\label{e5.18}
\end{multline}
The main term $\cD^*_{10}$ will be matched with the main terms of $\cD_2,\cD_3$ 
but the remaining part $\cD^+_{10}$ can only be estimated completely after 
summation over $r \sim R$, because we have to apply the 
property~\eqref{e44SWcond} for moduli $cr$. 

By the inequality $2|AB| \leq A^2 + B^2$ we find that the quadruple sum 
over $s$, $s'$, $v \, (c)$, $v' \, (c)$ in~\eqref{e5.18} is bounded by
\[
 \sum_s \sum_{c \mid s} \sumst_{v \, (c)} \sum_{(s',s) = c}\, \sum_{v' \, (c)}
 |\sigma_{s,s'}(v'/v)| \Bigg(
  \sum_{\substack{n \equiv u \, (r) \\ n \equiv v \, (c) \\ (n,rs) = 1}} \beta_n
  - \frac{1}{\varphi(rc)} \sum_{(n,rs)=1} \beta_n
  \Bigg)^2.
\]
Here the sum over $v' \pmod{c}$ is bounded by (see~\eqref{e442SigmaSum}, 
\eqref{e412GammasBd})
\begin{align*}
 \sumst_{e \, (c)} |\sigma_{s,s'}(e)|
 &\leq \frac{1}{[s,s']} \sum_{\omega \, ([s,s'])} \sumst_{e \, (c)}
  \big| \gamma_s(\omega) \gamma_{s'} (\omega e) \big|
\\
 &= \frac{1}{[s,s']}
  \left( \sum_{\omega\,(s)} |\gamma_s(\omega)| \right)
  \left( \sum_{\omega \, (s')} |\gamma_{s'} (\omega)| \right)
  \leq \frac{\rho(s) \rho(s')}{[s,s']}.
\end{align*}
Now put $s = ck$, $s' = cl$. Summing over $k,l < 2S$ we conclude the following clear estimate for $\cD^+_{10}$;
\[
 |\cD^+_{10}|
 \leq |\hat{f}(0)| M \mathop{\sum \sum \sum}_{c,k,l < 2S} \frac{\rho^2(c) \rho(kl)}{crkl}
  \cdot \sumst_{\nu \, (cr)} \Bigg(
   \sum_{\substack{(n,crk)=1 \\ n \equiv \nu \, (cr)}} \beta_n
   - \frac{1}{\varphi(cr)} \sum_{(n,crk)=1} \beta_n
   \Bigg)^2.
\]
Hence, 
\begin{equation}
\cD^+_{10} \ll \frac{M}{r} \Bigg(\sum_{l<2S}\frac{\tau(l)^j}{l}\Bigg)
\sum_{k<2S}\frac{\tau(k)^j}{k}\cE_k(r) , 
\end{equation} 
which yields the first part on the right side of~\eqref{eDispBd}.

Using Corollary~\ref{cor42BdBetaSW}, we can estimate $\cE(r)$ on average 
over $r < R$ as follows. 
\begin{align*}
 \sum_{r < R}\cE_k(r)  
& \le \frac{2\log 2S}{C}
\sum_{q \leq CR} \tau(q)^{j+1} \sumst_{\nu \, (q)}  \Bigg( 
  \sum_{\substack{n \leq N \\ n \equiv \nu\,(q) \\ (n,k) = 1}} \beta_n
  - \frac{1}{\varphi(q)} \sum_{\substack{n \leq N \\ (n,qk) = 1}} \beta_n
  \Bigg)^2 \\
&  \ll \frac{\tau(k)}{C} \big( CR + \sqrt{CRN} + N (\log N)^{-A} \big) 
N (\log 2QN)^{4^{j+2} } ,
\end{align*}
where $1 \leq C \leq 2S$. Since $j$ is fixed and $A$ is arbitrary, 
we obtain~\eqref{eDispErrBd}.



\subsection{First estimation of $\cD_{11}$}
\label{D11}

Recall that $\cD_{11}$ is given by~\eqref{e436D1byPoisson} with $h \neq 0$. We 
shall apply two methods for the estimation of $\cD_{11}$. We begin with the one 
which is more complex. The second method will be easier because it makes use 
of the basic transformations in the first one.

We split the summation over $n',n$ according to the difference, say
\begin{equation}
 n'
 = n+u.
\label{e450nn'}
\end{equation}
Therefore, for every given $u$, the variable $n'$ will be uniquely determined
 by $n$. Let $\cU$ be the set of shifts $u$, so $\cU \subseteq (-N, N)$, and
 in case of $\cD_{11}$ we have $u \equiv 0 \pmod{r}$. However, this property
 of $u$ does not play any role, except that it ensures
 $|\cU| \leq 2 N r^{-1} + 1 \leq 3 N r^{-1}$. Because the cardinality of 
$\cU$ is the only factor in the result, we shall be able to treat similarly 
the corresponding sum $\cD_{21}$ in subsection~\ref{s48estD21}. Now, 
by~\eqref{e436D1byPoisson} we obtain
\begin{equation}
\begin{aligned}
 |\cD_{11}|
 &\leq \frac{2M}{r} \sum_{u \in \cU} \dsum_{(n,r) = 1} |\beta_n \beta_{n'}|
    \Bigg| \sum_s \sum_{s'} \sum_\omega \sum_{h \neq 0} \Bigg| 
\\
 &\ll r^{-2} M N^{\frac{3}{2}} (\log N)^j T(u)^{\frac{1}{2}}
\end{aligned}
\label{e451D11est}
\end{equation}
for some $u \in \cU$ with
\[
 T(u)
 = \sum_{(n,r)=1} f \left( \frac{n}{N} \right) \Bigg|
  \sum_s \sum_{s'} \sum_\omega \sum_{h \neq 0} 
  \Bigg|^2.
\]
Here, the coefficients $\beta_n, \beta_{n'}$ are gone; they are replaced with a nice smooth function $f$ supported in $[\frac{1}{2}, 3]$. Squaring out, we get
\begin{multline}
 T(u)
 = \sum_{(n,r)=1} f\left(\frac{n}{N}\right)
  \sum_s \sum_{s'} \sum_{s_1} \sum_{s_1'} \frac{1}{[s,s'][s_1,s_1']}
\\
  \sum_{\omega \, ([s,s'])} \gamma_s(\omega n) \gamma_s(\omega n')
  \sum_{\omega_1 \, ([s_1,s_1'])} \gamma_{s_1} (\omega_1 n) \gamma_{s_1'} (\omega_1 n')
\\
 \dsum_{h h_1 \neq 0}
  \hat{f} \left( \frac{hM}{r [s,s']} \right)
  \bar{\hat{f}} \left( \frac{h_1 M}{r [s_1, s_1']} \right)
  \e \left(
   \frac{-a \cl{n}}{r} \big( h \conj{[s,s']} - h_1 \conj{[s_1,s_1']} \big)
   - \frac{\omega h \conj{r}}{[s,s']}
   + \frac{\omega_1 h_1 \conj{r}}{[s_1,s_1']}
   \right).
\label{e452TuSumT=Tn}
\end{multline}
Put
\begin{equation}
 d
 = h [s_1, s_1'] 
  - h_1 [s,s'].
\label{e453DDef}
\end{equation}
Let $T^= (u)$ be the partial sum of $T(u)$ with $d = 0$, and $\Tn (u)$ the remaining sum.

We estimate $T^=(u)$ by summing all terms with absolute value. Given $n$ with $(n,ss_1) = 1$ and $(n', s's_1') = 1$ (recall~\eqref{e450nn'}) we find that the sum over $\omega \pmod{[s,s']}$ is bounded by $\rho(s) \rho(s')$ and the sum over $\omega_1 \pmod{[s_1,s_1']}$ is bounded by $\rho(s_1)\rho(s_1')$. Hence
\[
 T^=(u)
 \ll N \sum_s \sum_{s'} \sum_{s_1} \sum_{s_1'}
  \frac{\rho(s) \rho(s') \rho(s_1) \rho(s_1')}{[s,s'] [s_1,s_1']}
  \sumst_{h \neq 0} \left| \hat{f} \left( \frac{hM}{r[s,s']} \right) \right|,
\]
where $\sumst$ is restricted by the requirement that 
$h_1 = h [s_1,s_1'] / [s,s']$ is an integer. Hence, 
$[s_1,s_1'] = k [s,s'] / (h,[s,s'])$
where $k$ is an integer. We get $1 \leq k \leq 4 S^2$ and
\[
 T^=(u)
 \ll S^\eps N \sum_s \sum_{s'} \sum_{h \neq 0} \sum_{1 \leq k \leq 4 S^2}
  \frac{(h,[s,s'])}{k [s,s']^2} \left| \hat{f} 
\left( \frac{hM}{r [s,s']} 
\right) \right| \tau_3(k)
\]
because, for given $s,s',h,k$, the number of 
$s_1,s_1'$ is $\leq \tau_3(kss')\ll S^{\varepsilon}$. Hence
\[
 T^=(u)
 \ll S^\eps N \sum_{l < 4S^2} \sum_{h \neq 0}
  \frac{(h,l)}{l^2} \left| \hat{f} \left( \frac{hM}{rl}\right) \right| \big( \log 2 |h| \big)^4.
\]
This easily yields
\begin{equation}
 T^=(u)
 \ll r N M^{-1} (rS)^\eps.
\label{e454T=Bd}
\end{equation}

Now we proceed to $\Tn(u)$ which is given by~\eqref{e452TuSumT=Tn} 
with the summation condition $d = h [s_1,s_1'] - h_1 [s,s'] \neq 0$. We put
\begin{equation}
 \omega n
 \equiv \alpha \, (s),
\qquad
 \omega n'
 \equiv \alpha' \, (s'),
\qquad
 \omega_1 n
 \equiv \alpha_1 \, (s_1),
\qquad
 \omega_1 n'
 \equiv \alpha_1' \, (s_1'),
\label{e455ss's1s1'congs}
\end{equation}
and we extract from~\eqref{e452TuSumT=Tn}  the following formula for $ \Tn(u)$: 
\begin{multline}
\mathop{\sum \sum \sum \sum}_{(s s' s_1 s_1', r ) =1} \Bigg(
  \dsum_{\substack{\alpha \, (s) \\ \alpha_1 \, (s_1)}} \gamma_s(\alpha) \gamma_{s'}(\alpha') \big/ [s,s']
  \Bigg)
  \Bigg(
  \dsum_{\substack{\alpha_1 \, (s_1) \\ \alpha_1' \, (s_1')}} \gamma_{s_1}(\alpha_1) \gamma_{s_1'}(\alpha_1') \big/ [s_1,s_1']
  \Bigg)
\\
  \dsum_{\substack{hh_1 \neq 0 \\ d \neq 0}} \hat{f} \left( \frac{hM}{r[s,s']} \right) \hat{f} \left( \frac{h_1 M}{r [s_1,s_1']} \right)
\\
  \sum_{\substack{(n,rss_1)=1 \\ (n',rs's_1')=1}} f \left( \frac{n}{N} \right)
   \e \left(
    \frac{-ad}{r} \conj{n [s,s'] [s_1,s_1']}
    - \frac{\omega h \conj{r}}{[s,s']}
    + \frac{\omega_1 h_1 \conj{r}}{[s_1,s_1']}
    \right),
\label{e456TnSums}
\end{multline}
where $\omega \pmod{[s,s']}$ and $\omega_1 \pmod{[s_1,s_1']}$ are determined in terms of $n$ and $n' = n+u$ by the congruences~\eqref{e455ss's1s1'congs}.

The innermost sum over $n$ in~\eqref{e456TnSums} is an incomplete Kloosterman sum of modulus $rw$ with $w = [s,s',s_1,s_1']$. We shall see this clearly after completing the sum and factoring into prime moduli. To complete the sum, we split $n$ into residue classes modulo $rw$ and apply Poisson's formula. We find
\begin{equation}
 \sum_n f \left(\frac{n}{N}\right) \e \big( \,\,\, \big)
 = \frac{N}{rw} \sum_l \hat{f} \left( \frac{lN}{rw} \right) V_l(rw)
\label{e457PoissFE}
\end{equation}
where $V_l(rw)$ is the complete sum
\[
 V_l(rw)
 = \sum_{\nu \, (\mo \, rw)} \e \left(
  \frac{- \nu l}{rw}
  - \frac{ad}{r} \conj{\nu [s,s'] [s_1,s_1']}
  - \frac{\omega h \conj{r}}{[s,s']}
  - \frac{\omega_1 h_1 \conj{r}}{[s_1,s_1']}
  \right). 
\]
Here $\omega \pmod{[s,s']}$ and $\omega_1 \pmod{[s_1,s_1']}$ are determined in terms of $\nu$ and $\nu' = \nu + u$ by the congruences~\eqref{e455ss's1s1'congs} with $n,n'$ substituted by $\nu,\nu'$, respectively. The complete sum factors into
\[
 V_l(rw)
 = \cJ_l(w) K \big(l \conj{w}, ad \conj{[s,s'][s_1,s_1']}; r)
\]
where $K(*,*;r)$ is the Kloosterman sum to modulus $r$, and
\[
 \cJ_l(w)
 = \sum_{\substack{\nu \, (\mo \, w) \\ (\nu,ss_1 ) =1 \\ (\nu', s' s_1') = 1}}
  \e \left(
   \frac{- \nu l \conj{r}}{[s,s',s_1,s_1']}
   - \frac{\omega h \conj{r}}{[s,s']}
   + \frac{\omega_1 h_1 \conj{r}}{[s_1,s_1']}
   \right).
\]
By Weil's estimate we get
\[
 |K(l\conj{w}, *; r)|
 \leq (l,r)^{\frac{1}{2}} r^{\frac{1}{2}} \tau(r).
\]
For $l = 0$ we can do better by using estimates for Ramanujan's sums
\[
 |K(0,d;r)|
 \leq (d,r).
\]
Next $\cJ_l(w)$ factors into sums to distinct prime moduli $p \mid w$. If $p \mmid s s's_1 s_1'$, then the local sums appear in the form
\[
 \sum_{\nu \, (\mo \, p)} \e_p \big( -\nu l \conj{r w / p} + A \conj{\nu} + A' \conj{\nu'} \big)
\]
with $A A'=0$ and $\nu' = \nu + u$. These local sums are Kloosterman sums 
(if $A = 0$, then shift $\nu$ to $\nu - u$) which are bounded by 
$2 (l,p)^\frac{1}{2} p^\frac{1}{2}$. If $p^2 \mid s s' s_1 s_1'$, then 
we use the trivial bound $p$. Hence, we get
\[
 |\cJ_l(w)|
 \leq \tau(w) \prod_{p \mmid s s' s_1 s_1'} (l,p)^\frac{1}{2} p^\frac{1}{2}
  \prod_{p^2 \mid s s' s_1 s_1'} p
 \leq \tau(w) (l,w)^\frac{1}{2} (ss' s_1 s_1')^\frac{1}{2}.
\]
For $l = 0$ we do better; $|\cJ_0(w)| \leq \tau(w) w$. Multiplying the 
above local estimates we obtain
\begin{equation}
 |V_l(rw)|
 \leq \tau(rw) (l,rw)^\frac{1}{2} r^\frac{1}{2} (2S)^2
\label{e458VlBd}
\end{equation}
for every $l$, and for $l = 0$ we have
\begin{equation}
 |V_0(rw)|
 \leq \tau(w) w (d,r).
\label{e459V0Bd}
\end{equation}
Introducing~\eqref{e458VlBd} and~\eqref{e459V0Bd} into~\eqref{e457PoissFE} we get
\begin{equation}
 \sum_n f\left(\frac{n}{N}\right)
  \e \big( \, \, \, \big)
 \ll N \frac{(d,r)}{r} \tau(w)
  + S^2 r^\frac{1}{2} \tau_3(rw)
\label{e460SumFEestVl}
\end{equation}
where $d$ is defined in~\eqref{e453DDef}. Introducing~\eqref{e460SumFEestVl} into~\eqref{e456TnSums} we get
\begin{multline}
 \Tn(u)
 \ll \sum_s \sum_{s'} \sum_{s_1} \sum_{s_1'}
  \frac{\rho(s) \rho(s')}{[s,s']} \frac{\rho(s_1)\rho(s_1')}{[s_1,s_1']}
\\
  \dsum_{\substack{hh_1 \neq 0 \\ d \neq 0}} \left|
   \hat{f} \left( \frac{hM}{r[s,s']} \right) \hat{f} \left( \frac{h_1 M}{r [s_1,s_1']} \right)
   \right|
   \left( N \frac{(d,r)}{r} + S^2 r^\frac{1}{2} \right) \tau_3(rw).
\end{multline}
It is easy to see that the second term $S^2 r^\frac{1}{2}$ contributes at 
most (ignoring $d \neq 0$)
\begin{equation}
 S^2 r^\frac{1}{2} \left(\frac{r}{M}\right)^2
  \sum_s \sum_{s'} \sum_{s_1} \sum_{s_1'} \rho(s) \rho(s') \rho(s_1) \rho(s_1') \tau_3(rw)
 \ll r^\frac{5}{2} S^6 M^{-2} (rS)^\eps.
\label{eignoring}
\end{equation}
The first term $N(d,r) / r$ contributes at most (keeping $d \neq 0$)
\[
 (rS)^\eps \frac{N}{r}
 \dsum_{S \leq a,a_1 \leq 4S^2} (aa_1)^{-1}
 \dsum_{\substack{hh_1 \neq 0 \\ h a_1 \neq h_1 a}}
  \left(1 + \frac{|h| M}{ra} \right)^{-4} \left(1 + \frac{|h_1| M}{ra_1} \right)^{-4}
  (h a_1 - h_1 a, r).
\]
One can show by crude arguments that this is $\ll (rS)^\eps r S^4 N M^{-2}$, 
so it is smaller than~\eqref{eignoring} because of~\eqref{e5.3}. 
Adding the above bound~\eqref{eignoring} for $\Tn(u)$ to the 
bound~\eqref{e454T=Bd} for $T^=(u)$ we find that
\begin{equation}
 T(u)
 \ll (r N M^{-1} + r^\frac{5}{2} S^6 M^{-2}) (rS)^\eps.
\end{equation}
Hence, by~\eqref{e451D11est} we obtain
\begin{equation}
 \cD_{11}
 \ll \big(r^{-\frac{3}{2}} N^2 M^\frac{1}{2}
  + r^{-\frac{3}{4}} S^3 N^\frac{3}{2} \big)
  (rS)^\eps.
\label{e464D11Bd1}
\end{equation}


\subsection{Second estimation of $\cD_{11}$}
\label{D11'}

Recall again that $\cD_{11}$ is given by~\eqref{e436D1byPoisson} with $h \neq 0$. Now we put the variable $s'$ outside and apply Cauchy's inequality getting (in place of~\eqref{e451D11est})
\begin{equation}
 \cD_{11}
 \ll r^{-2} S M N^\frac{3}{2} (\log N)^j T(u;s')^\frac{1}{2}
\label{e465D11BdCauchy}
\end{equation}
where $T(u;s')$ is given by the same Fourier series as $T(u)$ in~\eqref{e452TuSumT=Tn}, but with $s' = s_1'$ being fixed, so no longer summation variables. As before, we split
\begin{equation}
 T(u;s')
 = T^=(u;s') + \Tn(u;s')
\end{equation}
according to $d = h [s_1, s_1'] - h_1 [s,s'] = h [s_1,s'] - h_1 [s,s']$ vanishing or not vanishing.

By the same arguments applied to $T^=(u)$ we derive
\begin{equation}
 T^=(u;s')
 \ll r S^{-1} N M^{-1} (r S)^\eps.
\label{e467T=Bd}
\end{equation}

Next, we proceed with $\Tn(u;s')$ along the same lines as we did with $\Tn(u)$, keeping in mind that $s' = s_1'$ are fixed. We arrive again at the product of Kloosterman sums, say $V_l(rw)$ with $w = [s,s',s_1,s_1'] = [ s,s',s_1]$. Note that $w \ll S^3$, while we had before $w \ll S^4$. This deficit of $s_1'$ in $w$ will translate into an extra saving of a factor $S^{1/2}$ in estimates for the relevant Kloosterman sums. Specifically, the complete sum $V_l(rw)$ factors into the sum of modulus $r$ and the sum $\cJ_l(w;s')$ of modulus $w = [s,s',s_1]$. The one of modulus $r$ gets the same bounds, but $\cJ_l(w;s')$ gets better bounds;
\[
 |\cJ_l(w;s')|
 \leq \tau(w) \prod_{p \mmid s s' s_1} (l,p)^\frac{1}{2} p^\frac{1}{2}
  \prod_{p^2 \mid ss' s_1} p
 \leq \tau(w) (l,w)^\frac{1}{2} (s s' s_1)^\frac{1}{2}
\]
and $|\cJ_0(w)| \leq \tau(w) w$. Hence we derive
\[
 |V_l(rw)|
 \leq \tau(rw) (l,rw)^\frac{1}{2} r^\frac{1}{2} (2S)^\frac{3}{2},
\]
which is better by a factor $(2S)^\frac{1}{2}$ than~\eqref{e458VlBd}, and
\[
 |V_0(rw)|
 \leq \tau(w) w (d,r),
\]
which is the same as~\eqref{e459V0Bd}. Next, following along the same 
lines, we get
\begin{equation}
\begin{aligned}
 \Tn(u)
 &\ll \sum_s \sum_{s_1}
  \frac{\rho(s) \rho(s')}{[s,s']} \frac{\rho(s_1)\rho(s_1')}{[s_1,s_1']}
\\
 & \qquad \qquad
  \dsum_{\substack{hh_1 \neq 0 \\ d \neq 0}} \left|
   \hat{f} \left( \frac{hM}{r[s,s']} \right) \hat{f} \left( \frac{h_1 M}{r [s_1,s_1']} \right)
   \right|
   \left( N \frac{(d,r)}{r} + S^\frac{3}{2} r^\frac{1}{2} \right) \tau_3(rw)
\\
 &\ll r^2 S^2 M^{-2} \big(r^{-1} N + r^\frac{1}{2} S^\frac{3}{2}\big) (rS)^\eps
  \ll r^\frac{5}{2} S^\frac{7}{2} M^{-2} (rS)^\eps
\end{aligned}
\end{equation}
because of~\eqref{e5.3}. Adding this bound to~\eqref{e467T=Bd} we obtain
\begin{equation}
 T(u;s')
 \ll \big( rS^{-1} N M^{-1} + r^\frac{5}{2} S^\frac{7}{2} M^{-2} \big) (rS)^\eps.
\end{equation}
Hence, by~\eqref{e465D11BdCauchy} we obtain
\begin{equation}
 \cD_{11}
 \ll \big( r^{-\frac{3}{2}} S^\frac{1}{2} N^2 M^\frac{1}{2} + r^{-\frac{3}{4}} S^\frac{11}{4} N^\frac{3}{2} \big) (rS)^\eps.
\label{e470D11Bd2}
\end{equation}

Note that, by comparison, the first term in our first bound~\eqref{e464D11Bd1} is better by a factor $S^\frac{1}{2}$; however, the second term in our second bound~\eqref{e470D11Bd2} is better by a factor $S^\frac{1}{4}$.


\subsection{Evaluation of $\cD_2$}
\label{D2}

Recall that $\cD_2$ stands for the cross terms in the dispersion \eqref{eDispSplitD123},
\begin{equation}
 \cD_2
 = \frac{1}{\varphi(r)} \sum_{(n',r)=1} \beta_{n'} \sum_s \sum_{s'} \sum_{(m,r)=1}
  f \left( \frac{m}{M} \right) \sum_{mn \equiv a \, (r)} \beta_n \gamma_s(mn) \gamma_{s'}(mn').
\label{e471D2repeat}
\end{equation}
We execute the summation over $m$ by splitting into residue classes modulo $r [s,s']$ as in $\cD_1$. Apply Poisson's formula for every class, and pull out the contribution of the frequency $h = 0$, getting
\begin{equation}
 \cD_2
 = \cD_{20} + \cD_{21}
\end{equation}
with
\[
 \cD_{20}
 = \hat{f}(0) \frac{M}{r \varphi(r)} \sum_n \sum_{n'} \beta_n \beta_{n'}
  \sum_s \sum_{s'} \frac{1}{[s,s']} \sum_{\omega \, ([s,s'])} \gamma_s(\omega n) \gamma_{s'}(\omega n').
\]
Do not forget the hidden co-primality conditions $(n,rs) = (n',rs') = 1$. The inner sum depends only on $n'/n$ modulo $c = (s,s')$, see~\eqref{e440gammasbyn'e}. Hence
\[
 \cD_{20}
 = \hat{f}(0) \frac{M}{r\varphi(r)} \sum_s \sum_{s'} \sum_{e \, (c)} \sigma_{s,s'} (e)
  \dsum_{n' \equiv e n \, (c)} \beta_n \beta_{n'}.
\]
Following the arguments applied to $\cD_{10}$ we arrive at
\begin{equation}
 \cD_{20}
 = \cD_{20}^* + \cD_{20}^+,
\end{equation}
where $\cD_{20}^*$ is the main term which agrees with $\cD_{10}^*$ given 
by~\eqref{e445D10*def} and
\begin{multline}
 \cD_{20}^+
 = \hat{f} (0) \frac{M}{r \varphi(r)} \sum_s \sum_{s'} \sumst_{v \, (c)} \sumst_{v' \, (c)} \sigma_{s,s'} (v'/v)
\\
  \Bigg( \sum_{\substack{(n,rs)=1 \\ n \equiv v \, (c)}} \beta_n
   - \frac{1}{\varphi(c)} \sum_{(n,rs)=1} \beta_n
   \Bigg)
  \Bigg( \sum_{\substack{(n',rs')=1 \\ n' \equiv v' \, (c)}} \beta_{n'}
   - \frac{1}{\varphi(c)} \sum_{(n',rs')=1} \beta_{n'}
   \Bigg).
\end{multline}
Following the arguments applied to $\cD_{10}^+$ we conclude the clear estimate
\[
 |\cD_{20}^+|
 \leq |\hat{f}(0)| \frac{M}{r\varphi(r)} \mathop{\sum \sum \sum}_{c,k,l < 2S} \frac{\rho^2(c) \rho(kl)}{ckl}
  \sumst_{\nu \, (c)} \Bigg(
   \sum_{\substack{(n,crk)=1 \\ n \equiv \nu \, (c)}} \beta_n
   - \frac{1}{\varphi(c)} \sum_{(n,crk)=1} \beta_n
   \Bigg)^2.
\]
Hence, 
\begin{equation}
 \cD_{20}^+
 \ll \frac{M}{r\varphi(r)} \bigg(\sum_{l<2S}\frac{\tau(l)^j}{l}\bigg) 
\sum_{k<2S}\frac{\tau(k)^j}{k}\cE_{kr}(1) ,
\end{equation}
which yields the second part on the right side of~\eqref{eDispBd}.


\subsection{Estimation of $\cD_{21}$}
\label{s48estD21}

Recall that $\cD_{21}$ is derived from~\eqref{e471D2repeat} after applying Poisson's formula and dropping the contribution of the frequency $h = 0$. Therefore
\begin{multline}
 \cD_{21}
 = \frac{M}{r\varphi(r)} \dsum_{(nn',r)=1} \beta_n \beta_{n'}
  \sum_s \sum_{s'} \sum_{\omega \, ([s,s'])} \gamma_s(\omega n) 
\gamma_{s'}(\omega n') \big/ [s,s']
\\
  \sum_{h \neq 0} \hat{f} \left( \frac{hM}{r [s,s']} \right)
  \e \left( -ah \frac{\conj{n [s,s']}}{r} 
- \frac{\omega h \conj{r}}{[s,s']} \right).
\end{multline}
Note that this is the same formula as that for $\cD_{11}$, except that here 
the condition $n' \equiv n \pmod{r}$ is gone, but its absence is paid back 
by the factor $1 / \varphi(r)$. Now, the corresponding 
set of shifts $\cU$ is the full 
segment $|u| < 2N$ without the condition $u \equiv 0 \, \pmod{r}$, which 
does not matter for the arguments. Having said that, we get the 
same estimates for $\cD_{21}$ 
as those for $\cD_{11}$. That is, \eqref{e464D11Bd1} and~\eqref{e470D11Bd2} 
are satisfied by $\cD_{21}$.


\subsection{Evaluation of $\cD_3$}
\label{D3}

Recall that $\cD_3$ is derived by squaring the main term in the dispersion~\eqref{eDispSplitD123}:
\begin{equation}
 \cD_3
 = \frac{1}{\varphi(r)^2} \sum_s \sum_{s'} \dsum_{\substack{(n,rs)=1 \\ (n',rs')=1}} \beta_n \beta_{n'}
  \sum_{(m,r)=1} f \left( \frac{m}{M} \right) \gamma_s(mn) \gamma_{s'}(mn').
\end{equation}
Clearly $\cD_3$ is just the average of $\cD_2 = \cD_2(a)$ given by~\eqref{e471D2repeat} over the classes $a \pmod{r}$, $(a,r) = 1$. Since the results for $\cD_2 = \cD_2(a)$ do not depend on $a$ we get the same main term
\begin{equation}
 \cD_{30}^*
 = \cD_{20}^*
 = \cD_{10}^*
\end{equation}
given by~\eqref{e445D10*def} and the same existing bounds 
for the rest of $\cD_3$. Actually, one can get much stronger estimates, 
but we do not need them.

Adding and subtracting all the parts of the dispersion, we complete the 
proof of Proposition~\ref{p43DispBdE} and its 
consequences Proposition~\ref{pEstMaxE} and Corollary~\ref{corEstBilForm}.


\section{\bf{Cropped Divisors in Residue Classes}}
\label{c6CroppDiv}

\subsection{Statement of results}

It has been established in~\cite{FI1} that the divisor function $\tau_3(k)$ 
is distributed uniformly over the residue classes $a \pmod{q}$, $(a,q)=1$ 
to modulus $q$ which is fixed, yet relatively large. One of the results 
of~\cite{FI1} yields
\[
 \sum_{\substack{k \leq x \\ k \equiv a \,(\mo q)}} \tau_3(k)
 - \frac{1}{\varphi(q)} \sum_{\substack{k \leq x \\ (k,q)=1}} \tau_3(k)
 \ll q^{-1} x^{1 - \vartheta}
\]
with $\vartheta > 0$ a small constant, provided 
$q \leq x^{\frac{1}{2} + \frac{1}{231}}$.
In practice one needs similar results for cropped divisors
\[
 \tau_F(k)
 = \sum_{l m n = k} F \left( \frac{l}{L}, \, \frac{m}{M}, \, \frac{n}{N} \right)
\]
where $F(x,y,z)$ is a nice continuous function, compactly supported in 
$\R^+ \times \R^+ \times \R^+$ and $L,M,N \geq 1$ are at our disposal. 
Here the three divisors run over segments $l \asymp L$, $m \asymp M$, 
$n \asymp N$. This extra localization is welcome for applications. 
Actually, we also treated in~\cite{FI1} cropped divisor functions of 
various shapes in a subsidiary capacity. 

In this section we follow closely the arguments of~\cite{FI1} and 
enhance them by a device used by Y. Zhang~\cite{Z} in order to cover 
a bit larger, yet vital range of $L,M,N$. For sheer curiosity, and 
perhaps for clarity, we let our arguments handle technical barriers 
somewhat differently than in~\cite{FI1} and~\cite{Z}. 
First, we fix a cropping function $f$ which is of ${\mathcal C}^4$ class and 
compactly supported in~$\R^+$. Let $q > 1$ be squarefree and $(a,q) = 1$. 
We shall evaluate the sum
\begin{equation}
 S(L,M,N)
 = \tsum_{l m n \equiv a \, (q)} f \left( \frac{l}{L} \right) 
f \left( \frac{m}{M} \right) f \left( \frac{n}{N} \right)
\end{equation}
which is expected to be well-approximated by
\begin{equation}
 S_0(L,M,N)
 = \frac{1}{\varphi(q)} \tsum_{(lmn,q) = 1} f \left( \frac{l}{L} \right) 
f \left( \frac{m}{M} \right) f \left( \frac{n}{N} \right).
\end{equation}
Our goal is to estimate the error term
\begin{equation}
 E(L,M,N)
 = S(L,M,N) - S_0(L,M,N).
\end{equation}
As in~\cite{Z}, we assume that the modulus $q$ has a relatively small factor which will help us cover slightly wider ranges of $L,M,N$.

\begin{prop} \label{p51EBd}
Let $L \geq M \geq N \geq 1$ with $LMN = x$ and $q > 1$ be squarefree, $(a,q) = 1$. Suppose $s \mid q$ and
\begin{equation}
 sN
 < \min (q, x/q).
\label{e54HypEBd}
\end{equation}
Then
\begin{equation}
 E(L,M,N)
 \ll x^\eps \left( \frac{x}{N} \right)^\frac{1}{2} \left( \frac{q}{s} \right)^\frac{1}{4}
  + x^\eps (xsN)^\frac{1}{3}
\label{e55EBd}
\end{equation}
for any $\eps > 0$, the implied constant depending only on $\eps$.
\end{prop}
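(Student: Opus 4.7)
The plan is to handle the three variables in order of length, completing the shortest (the $n$-sum) by Poisson and then estimating the resulting bilinear exponential sums in $(l,m)$ via Cauchy--Schwarz combined with Zhang-style exploitation of the factorization $q=rs$. Since $N$ is the smallest variable and by~\eqref{e54HypEBd} $N<q$, the $n$-sum is genuinely incomplete. For fixed $(l,m)$ coprime to $q$ the congruence pins $n\equiv a\overline{lm}\pmod q$, and Poisson gives
\[
 \sum_{n\equiv a\overline{lm}\,(q)} f(n/N) = \frac{N}{q}\sum_h \hat f(hN/q)\,\e_q(ha\overline{lm}).
\]
The $h=0$ term, summed over $(l,m)$, reconstructs $S_0(L,M,N)$ exactly (using $\sum_{(n,q)=1}f(n/N)=\varphi(q)N\hat f(0)/q$ to leading order), so
\[
 E(L,M,N) = \frac{N}{q}\sum_{0<|h|\le H}\hat f(hN/q)\,T(h) + O(x^{-A}),
\]
with $T(h)=\sum_{(lm,q)=1}f(l/L)f(m/M)\,\e_q(ha\overline{lm})$ and $H=x^{\eps}q/N$ by rapid decay of $\hat f$. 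The task reduces to bounding $\sum_{0<|h|\le H}|T(h)|$.

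I would next apply Cauchy--Schwarz in $l$ and expand the $m$-sum into a square with a dummy $m'$, reducing matters to sums of the form $\sum_l w(l/L)\,\e_q(ha\bar l(\bar m-\bar{m'}))$, which are incomplete Kloosterman-type sums of length $L$ modulo $q$. This is the point where Zhang's factorization $q=rs$ enters: by the Chinese Remainder Theorem write $\e_q(\cdot)=\e_r(\cdot)\e_s(\cdot)$, split $l$ into residue classes modulo the small factor $s$, and within each class complete by Poisson \emph{modulo $r$ alone}. The dual frequencies then number only $\sim r/L$, and the resulting complete Kloosterman sums have modulus $r=q/s$, giving the Weil bound $\tau(r)r^{1/2}=\tau(r)(q/s)^{1/2}$ in place of $q^{1/2}$. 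Tracing this $s^{1/2}$ saving through the Cauchy square root yields the off-diagonal contribution $(LM)^{1/2}(q/s)^{1/4}=(x/N)^{1/2}(q/s)^{1/4}$ to $E(L,M,N)$, matching the first term of~\eqref{e55EBd}.

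The diagonal contribution, where $\bar m\equiv\bar{m'}\pmod q$ (equivalently $m\equiv m'\pmod q$), must be estimated separately. Counting the quadruples $(l,l',m,m')$ subject to this congruence, with the combinatorial weight from the $h$-sum, produces a contribution inside $\sum_h|T(h)|^2$ of order $HLM(1+M/q)$. Tracing through the Cauchy square root and the $N/q$ prefactor, and balancing against the off-diagonal by choosing the implicit splitting parameter, yields the second term $(xsN)^{1/3}$ of~\eqref{e55EBd}; the hypothesis $sN<x/q$ in~\eqref{e54HypEBd} is exactly the calibration that keeps this diagonal term from dominating.

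The main obstacle will be making the factorization $q=rs$ genuinely reduce the effective Kloosterman modulus from $q$ to $r$: one must separate the mod-$s$ and mod-$r$ parts of $\e_q(\cdot)$ via CRT \emph{before} applying Poisson, verify that the completed $l$-sum is truly of modulus $r$ (and not $q$), and sum over the $\varphi(s)$ residue classes mod~$s$ without losing the gained $s^{1/2}$. A secondary issue is the gcd factor $(h(\bar m-\bar{m'}),r)$ appearing in Weil's bound, which would weaken the estimate when large but is controlled on average by standard divisor-sum estimates. The regime $sN<\min(q,x/q)$ from~\eqref{e54HypEBd} is precisely what makes both completions---Poisson mod~$q$ for the $n$-sum and Poisson mod~$r$ for the $l$-sum---lie in their effective ranges, and thus is indispensable for the argument.
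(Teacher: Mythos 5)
Your proposal takes a genuinely different route from the paper, and it has a real gap precisely where the $s$-saving is claimed. The paper does \emph{not} apply Poisson to the shortest variable $n$: it completes the largest variable $l$ (getting only $\sim q/L$ dual frequencies), then performs a Weyl/Burgess shift $f(m/M)\mapsto f((m+bh)/M)$ with shifts $b$ ranging over the set $\cB=\{cs:1\leq c\leq C\}$ of \emph{multiples of $s$}, then compacts $(m,h)$ into a single variable before applying Cauchy's inequality. After squaring out and completing the $n,n'$ sums by Poisson, one is led not to classical one-variable Kloosterman sums but to the three-variable correlation $K(u,v,w;q_1)$ of Proposition~\ref{p54KBd}, whose generic bound rests on Birch--Bombieri (Riemann Hypothesis for a surface), not on Weil for curves. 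The Zhang/$s$-saving then comes entirely from the arithmetic of the shift parameter: since $b,b'\in\cB$ one has $u=b-b'\equiv 0\pmod{s_1}$, so the $s_1$-local factor of $K$ degenerates to a Ramanujan-type quantity bounded by~\eqref{e527estK0vw}, which is smaller than the Weil-type bound~\eqref{e526KuvwkBd} by roughly $s_1^{1/2}$.

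The mechanism you propose --- split the $l$-variable into residue classes modulo $s$ and complete each class by Poisson modulo $r=q/s$ alone --- does not produce a saving; on close inspection it \emph{loses} a factor $s^{1/2}$. Fixing a class $l\equiv l_0\pmod s$, the sub-sum has length $L/s$ and lives modulo $r$; completing gives $\sim 1+ r/(L/s)=1+q/L$ dual frequencies (not $r/L$) and the usual Weil bound $r^{1/2+\eps}$, hence the class contributes $\ll r^{1/2+\eps}(c,r)^{1/2}$. Summing trivially over the $s$ classes yields $s\cdot r^{1/2+\eps}=s^{1/2}q^{1/2+\eps}$, compared with the direct completion modulo $q$ which gives $q^{1/2+\eps}(c,q)^{1/2}$. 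You acknowledge that ``making the factorization $q=rs$ genuinely reduce the effective Kloosterman modulus'' is the obstacle, but no mechanism is offered that realises the reduction --- and for a genuine one-dimensional incomplete Kloosterman sum there is none. Moreover, even the $s=1$ case of~\eqref{e55EBd} (Theorem~4 of~\cite{FI1}) already requires the Birch--Bombieri estimate through three-variable exponential sums; a proof using only Weil for one-variable Kloosterman sums after a single Cauchy in $l$ cannot reach the exponent $\frac{1}{4}$ on $q$ together with the exponent $\frac{1}{3}$ in the second term. So both the Weyl shift by multiples of $s$ and the three-variable correlation estimate are missing from your argument, and they are the essential content of the proof.
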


If $s = 1$, then~\eqref{e55EBd} is covered by Theorem 4 of~\cite{FI1}. We wish the bound~\eqref{e55EBd} to be meaningful (non-trivial); specifically, we want
\begin{equation}
 E(L,M,N)
 \ll q^{-1} x^{1-\eps}
\label{e56EBdGood}
\end{equation}
for some $\eps > 0$, regardless of how small it is. By~\eqref{e55EBd} we get~\eqref{e56EBdGood} if
\begin{equation}
 s^\frac{1}{2} q^\frac{5}{2} x^{\eps - 1}
 < sN
 < x^{-\eps} \min(q, x^2 q^{-3}).
\label{e57EBdGoodConds}
\end{equation}

\begin{cor}\label{c53EBdGood}
Let $q$ be squarefree and $(a,q) = 1$. Suppose $q$ has divisors of any size up to a factor $z \geq 2$. Then~\eqref{e56EBdGood} holds for $L \geq M \geq N \geq 1$ with $LMN = x$ if
\begin{equation}
 q
  \leq \left( \frac{N}{z} \right)^\frac{1}{8} x^{\frac{1}{2} - \eps}.
\end{equation}
\end{cor}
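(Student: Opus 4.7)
The strategy is to derive Corollary~\ref{c53EBdGood} directly from Proposition~\ref{p51EBd} by an optimal choice of the auxiliary divisor $s\mid q$. In~\eqref{e55EBd} the first error term decreases and the second increases in $s$, so one balances them. A short computation shows that setting the two terms equal yields the balancing target
\[
 s_0 \asymp q^5 N^{-2} x^{-2+O(\eps)},
\]
and that this common balanced value coincides with the target $q^{-1} x^{1-\eps}$ exactly when $q^8 \leq N x^{4-O(\eps)}$. The factor $z$ in the hypothesis arises solely because divisor-richness only lets us approximate $s_0$ by a divisor of $q$ within a factor of $z$, forcing us to work with $s \in [s_0, z s_0]$ rather than with $s_0$ itself.

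Concretely, the first step is to use the divisor-rich hypothesis to pick $s\mid q$ with $s_0 \leq s \leq z s_0$ in the main range, taking $s=1$ in the subcase $s_0 < 1$ (the subcase $z s_0 > q$ is ruled out by the hypothesis on $q$). The second step is to verify~\eqref{e54HypEBd}: the estimate $sN \ll zq^5/(Nx^{2-O(\eps)})$ together with $q \leq (N/z)^{1/8} x^{1/2-\eps}$ gives both $sN<q$ and $sN<x/q$ with room to spare (using $N \leq x^{1/3}$ from $L\geq M\geq N$). The third step is to insert $s$ into~\eqref{e55EBd}: by construction the first error is $\ll z^{-1/4} q^{-1} x^{1-O(\eps)}$, while the second becomes $\ll (zq^5/(Nx))^{1/3} x^{O(\eps)}$, and the hypothesis $zq^8 \leq N x^{4-O(\eps)}$ is precisely what forces this to be $\ll q^{-1} x^{1-\eps}$. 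The degenerate subcase $s_0 < 1$ with $s=1$ is handled analogously: the first-term bound then reduces to $q^5 \leq N^2 x^{2-O(\eps)}$, which is exactly $s_0<1$, and the second reduces to $q \leq x^{2/3-\eps} N^{-1/3}$, which follows from the hypothesis together with $N \leq x^{1/3}$ by a short case split on whether $(N/z)^{1/8} x^{1/2}$ or $N^{2/5} x^{2/5}$ is the binding bound.

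The principal nuisance is bookkeeping of $\eps$-losses: the $x^\eps$ in Proposition~\ref{p51EBd}, the slack factor $z$ from divisor-richness, and the target balance each shift constants slightly, but all are reconciled with the single $\eps$ of Corollary~\ref{c53EBdGood} by starting from a strictly smaller $\eps_0$. No analytic ingredient beyond Proposition~\ref{p51EBd} is needed.
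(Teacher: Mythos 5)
Your proposal is correct and rests on the same underlying mechanism as the paper's proof --- apply Proposition~\ref{p51EBd} with a divisor $s \mid q$ produced by divisor-richness --- but you place $s$ at the opposite end of the feasible window. The paper takes $sN$ just below $x^{-\eps}\min(q,\,x^2/q^3)$, which makes both~\eqref{e54HypEBd} and the bound on the second error term in~\eqref{e55EBd} automatic, so that only the lower constraint from the first term (namely $x^\eps z q^8 < N x^4$, together with a weaker companion it subsumes) remains to be checked. You instead take $s \in [s_0, z s_0]$ with $s_0 \asymp q^5 N^{-2} x^{-2+O(\eps)}$, which makes the first-term bound automatic and shifts the work to verifying~\eqref{e54HypEBd} and the second-term bound; both again reduce to $z q^8 \leq N x^{4-O(\eps)}$ once one invokes the standing reduction $z \leq N$ that the paper also uses (if $z > N$ the hypothesis forces $q < x^{1/2-\eps}$ and the $s = 1$ case of~\cite{FI1} already suffices). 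The two parameterizations are interchangeable and expose the same binding inequality. One slip in your motivation: $s_0 \asymp q^5 N^{-2} x^{-2}$ is \emph{not} the point where the two error terms in~\eqref{e55EBd} equal each other --- equating them gives $s^* \asymp x^{2/7} N^{-10/7} q^{3/7}$, and $s_0 = s^*$ only on the critical surface $q^8 = N x^4$. What you in fact compute, and what your second paragraph correctly uses, is the point where the \emph{first} term meets the target $q^{-1}x^{1-\eps}$; the phrase ``setting the two terms equal'' should be amended accordingly, but this does not affect the validity of the argument.
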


\begin{proof}
We can assume that $z \leq N \leq q x^{-\eps}$, otherwise the result is classical (easy to prove). Since $N \leq x^\frac{1}{3}$ and $q \leq x^\frac{5}{8}$, we have $N < x^{-\eps} \min(q,x^2 q^{-3})$. Therefore, there exists $s \mid q$ with
\[
 \frac{\min}{x^\eps z N}
 \leq s
 < \frac{\min}{x^\eps N}.
\]
For this choice of $s$ the conditions~\eqref{e57EBdGoodConds} are satisfied if
\[
 q^\frac{5}{2} x^{\eps - 1}
 < N \left( \frac{\min}{x^\eps z N} \right)^\frac{1}{2},
\]
that is, if $x^{3 \eps} z q^4 < N x^2$ and $x^\eps z q^8 < N x^4$. However, the first condition follows from the second one. This completes the proof.
\end{proof}

Since Corollary~\ref{c53EBdGood} holds for every class $a \pmod{q}$ with $(a,q) = 1$, the result can be automatically generalized as follows:

\begin{cor} \label{cor53estSumFlmn}
Let $q$ be squarefree and $(ad,q) = 1$. Suppose $q$ has divisors of any size up to a factor $z \geq 2$. Then we have
\begin{equation}
 \tsum_{dlmn \equiv a \, (q)} f \left( \frac{l}{L} \right) f \left( \frac{m}{M} \right) f \left( \frac{n}{N} \right)
 = S_0(L,M,N)
  + O(q^{-1} d^{-1} x^{1-\eps})
\end{equation}
for any $L \geq M \geq N \geq 1$ with $dLMN = x$, if
\begin{equation}
 q \leq \left(\frac{N}{z}\right)^\frac{1}{8} \left(\frac{x}{d}\right)^{\frac{1}{2}} x^{-\eps}.
\label{esixten}
\end{equation}
\end{cor}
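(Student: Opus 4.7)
The plan is to reduce Corollary~\ref{cor53estSumFlmn} to Corollary~\ref{c53EBdGood} by absorbing the extra divisor $d$ into the residue class. Since $(d,q)=1$, the inverse $\bar d$ of $d$ modulo $q$ exists, and the congruence $dlmn\equiv a\pmod q$ is equivalent to $lmn\equiv a'\pmod q$ with $a'\equiv a\bar d\pmod q$. Because $(a,q)=(d,q)=1$, we have $(a',q)=1$, so the inner sum over triples $(l,m,n)$ is precisely of the shape handled by Corollary~\ref{c53EBdGood}, only applied with the parameter $x$ there replaced by $LMN=x/d$ (the two functions $f$'s in the cropping are unchanged).

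Next I would check the hypotheses line up. The condition required by Corollary~\ref{c53EBdGood} in the new setting reads
\[
q\le \Bigl(\frac{N}{z}\Bigr)^{1/8}(x/d)^{1/2-\eps'},
\]
for an arbitrary $\eps'>0$. Since $d\ge 1$ gives $x/d\le x$, the stronger assumed bound $q\le (N/z)^{1/8}(x/d)^{1/2}x^{-\eps}$ implies the above with, say, $\eps'=\eps/2$. Applying Corollary~\ref{c53EBdGood} therefore yields
\[
\sum\sum\sum\nolimits_{lmn\equiv a'(q)} f\bigl(\tfrac{l}{L}\bigr)f\bigl(\tfrac{m}{M}\bigr)f\bigl(\tfrac{n}{N}\bigr) \;=\; S_0(L,M,N)+O\bigl(q^{-1}(x/d)^{1-\eps'}\bigr),
\]
and $q^{-1}(x/d)^{1-\eps'}=q^{-1}d^{-1}\cdot d^{\,\eps'}x^{1-\eps'}$. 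In all anticipated applications $d$ is bounded by a fixed small power of $x$ (as it is in Case~\ref{C3} of the proof of Proposition~\ref{p34LambdaSumRichEst}, where $d\ll N^{1/4+3\eta}$), so choosing $\eps'$ slightly larger than $\eps$ lets the factor $d^{\eps'}$ be absorbed into $x^{\eps'-\eps}$, giving the desired bound $O(q^{-1}d^{-1}x^{1-\eps})$.

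Since all genuine analytic content (factorisation of the modulus, the Weil-type bounds used in Proposition~\ref{p51EBd}, and the passage to divisor-rich $q$ in Corollary~\ref{c53EBdGood}) is already absorbed in the result we are invoking, there is no serious obstacle here; the only thing to watch is the bookkeeping of exponents between $\eps$ on the right and the $d^{\eps'}$ produced by the reduction, which is handled by the observation above.
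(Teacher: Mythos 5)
Your reduction --- replacing $a$ by $a\bar d \pmod q$, so that the triple sum over $lmn \equiv a\bar d \pmod q$ is precisely the object of Corollary~\ref{c53EBdGood} with $LMN=x/d$ in place of $x$ --- is exactly the paper's intended argument; the paper's entire ``proof'' is the one sentence preceding the statement, noting that Corollary~\ref{c53EBdGood} is uniform in the residue class. Your instinct to watch the $d^{\eps}$ factor is right, but the fix you sketch is internally inconsistent: deducing the hypothesis $q\le (N/z)^{1/8}(x/d)^{1/2-\eps'}$ from~\eqref{esixten} amounts to requiring $(x/d)^{\eps'}\le x^{\eps}$, which (since $d\ge 1$, so $x/d\le x$) holds precisely when $\eps'\le\eps$, while the absorption of $d^{\eps'}$ into $x^{\eps'-\eps}$ needs $\eps'>\eps$; you cannot have both. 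The correct reading is that the $\eps$ in~\eqref{esixten} and the $\eps$ in the error term are not the same letter: take $\eps'=\eps$, get the error $O\bigl(q^{-1}(x/d)^{1-\eps}\bigr)=O\bigl(q^{-1}d^{-1}x^{1-\eps}d^{\eps}\bigr)$, and note that $q>1$ together with $N\le (x/d)^{1/3}$ and $z\ge 2$ forces $d<x^{1-24\eps/13}$, hence $d^{\eps}<x^{\eps-24\eps^{2}/13}$ and the error is $O\bigl(q^{-1}d^{-1}x^{1-24\eps^{2}/13}\bigr)$; renaming $\eps$ recovers the stated form. This is entirely cosmetic --- the paper elides it and, in the only place the corollary is used (Case~\ref{C3}), $d$ is a tiny power of $N$ --- but it is worth recording the precise mechanism rather than appealing to ``choose $\eps'$ slightly larger'', which does not survive scrutiny.
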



\subsection{Extracting the main term}

We begin the proof of Proposition~\ref{p51EBd} by applying Poisson's formula in the largest variable $l \equiv a \conj{mn} \pmod{q}$, getting
\begin{equation}
 S(L,M,N)
 = \frac{L}{q} \sum_h \hat{f} \left(\frac{hL}{q} \right)
  \dsum_{mn} f \left(\frac{m}{M}\right) f\left(\frac{n}{N}\right) \e\left(ah \frac{\conj{mn}}{q}\right).
\label{e510SPoissL}
\end{equation}
Here and hereafter, we use the convention that if the multiplicative inverse 
of $d$ modulo $q$ appears, then it is assumed automatically, without writing, 
that $d$ is restricted to $(d,q) = 1$. Of course, one must recall this 
restriction when things go out of sight.

The main term emerges from the zero frequency, which yields
\begin{equation}
 \hat{f}(0) \frac{L}{q} \dsum_{(mn,q)=1} f 
\left(\frac{m}{M}\right) f \left(\frac{n}{N}\right).
\label{e511ZeroFreq}
\end{equation}
This is not exactly $S_0(L,M,N)$, but close. Indeed, we have
\begin{align*}
 \sum_{(l,q)=1} f\left(\frac{l}{L}\right)
 &= \sum_{\delta \mid q} \mu(\delta) \sum_l f \left( \frac{\delta l}{L}\right)
  = \sum_{\delta \mid q} \mu(\delta) \left( \hat{f}(0) \frac{L}{\delta} + O(1)\right)
\\
 &= \hat{f}(0) L \frac{\varphi(q)}{q}
   + O\big(\tau(q)\big).
\end{align*}
Hence~\eqref{e511ZeroFreq} becomes $S_0(L,M,N) + O(MN \tau(q) / \varphi(q) )$, and we are left with
\begin{equation}
 S(L,M,N)
 = S_0(L,M,N)
  + S^*(L,M,N)
  + O\big( MN \tau(q) / \varphi(q) \big),
\label{e512Ssplit}
\end{equation}
where $S^*(L,M,N)$ denotes the part of~\eqref{e510SPoissL} with $h \neq 0$. This will be estimated in several steps.


\subsection{Weyl's shift}

First we are going to replace $f(m/M)$ by $f((m+bh)/M)$, where $b$ is a positive integer at our disposal; it will be chosen later from a finite set of integers of suitable type. We write
\begin{equation}
 S^*(L,M,N)
 = S_b (L,M,N) - \partial S_b (L,M,N),
\label{e513S*split}
\end{equation}
where
\begin{equation}
 S_b(L,M,N)
 = \frac{L}{q} \sum_n f \left(\frac{n}{N}\right)
  \sum_m \sum_{h \neq 0} \hat{f} \left( \frac{hL}{q} \right)
  f \left( \frac{m+bh}{M}\right) \e \left( ah \frac{\conj{mn}}{q} \right)
\label{e514SbDef}
\end{equation}
and $\partial S_b(L,M,N)$ is similar, but with the inner sum over $h \neq 0$ replaced by
\begin{equation}
 \sum_h \hat{f} \left( \frac{hL}{q} \right) \left(
  f \left( \frac{m+bh}{M} \right)
  - f \left( \frac{m}{M} \right)
  \right)
  \e \left( ah \frac{\conj{mn}}{q} \right).
\label{e515PartSbInnSum}
\end{equation}
Applying Poisson's formula to~\eqref{e515PartSbInnSum} (a reversed 
application) we find that the sum~\eqref{e515PartSbInnSum} is equal to
\begin{equation}
 \frac{q}{L} \sum_{l \equiv a \conj{mn} \, (q)} g \left( \frac{l}{L} \right),
\label{e516PartSbInnSumRevPoiss}
\end{equation}
where $g(u)$ is such that
\[
 \hat{g} \left( \frac{hL}{q} \right)
 = \hat{f} \left( \frac{hL}{q} \right)
  \left(
  f \left( \frac{m+bh}{M} \right)
  - f \left( \frac{m}{M} \right)
  \right)
\]
for every real $h$, that is;
\begin{equation}
 \hat{g}(v)
 = \hat{f}(v) \left(
  f \left( \frac{m}{M} + v \frac{bq}{LM} \right)
  - f \left( \frac{m}{M} \right)
  \right).
\end{equation}
By partial integration and the mean-value theorem we derive the following estimates:
\[
 |\hat{g}(v)|
 + |\hat{g}(v)''|
 \ll \frac{bq}{LM} (1 + v^2)^{-2},
  \qquad
 g(u)
 \ll \frac{bq}{LM} (1 + u^2)^{-1}.
\]
Hence, \eqref{e516PartSbInnSumRevPoiss} yields
\[
 \partial S_b(L,M,N)
 \ll \frac{bq}{LM} \tsum_{\substack{lmn \equiv a \, (q) \\ m \asymp M, \, n \asymp N}} (1 + l/L)^{-2}
 \ll x^\eps b N,
\]
by using $\tau_3(k) \ll k^\eps$. Inserting this estimate 
into~\eqref{e513S*split} we get
\begin{equation}
 S^*(L,M,N)
 = S_b(L,M,N) + O(x^\eps b N)
\label{e518S*estPartSb}
\end{equation}
for any $\eps > 0$, the implied constant depending on $\eps$. Note that $S^*(L,M,N)$ does not depend on $b$, whereas the right side of~\eqref{e518S*estPartSb} has received the extra parameter $b$ for free.

In~\eqref{e514SbDef} we translate $m$ to $m-bh$ and we write $h/q$ 
in the lowest terms, getting
\[
 S_b(L,M,N)
 = \frac{L}{q} \sum_{q_0 q_1 = q} \sum_{\substack{h \neq 0 \\ (h,q_1) =1}}
  \hat{f} \left( \frac{hL}{q_1} \right)
  \dsum_{(mn,q_0)=1} f \left( \frac{m}{M} \right) f \left( \frac{n}{N} \right)
  \e \left( \frac{ah}{q_1} \conj{(m + bh q_0) n} \right).
\]
Here the co-primality condition $(m,q_0) = 1$ is harmless, 
but $(n,q_0)=1$ should be relaxed before applying Cauchy's inequality. 
To this end we utilize the Möbius formula, giving
\begin{multline*}
 S_b(L,M,N)
 = \frac{L}{q} \sum_{q_0 q_1 = q} \sum_{\eta \mid q_0} \mu(\eta)
  \sum_{\substack{h \neq 0 \\ (h,q_1) = 1}} \hat{f} \left( \frac{hL}{q_1} \right)
  \sum_{(m,q_0) = 1} f \left( \frac{m}{M} \right)
\\
  \sum_n f \left( \frac{n \eta}{N} \right)
  \e \left( \frac{a \conj{q_0 \eta}}{q_1} \conj{ (m \conj{hq_0} + b) n} \right).
\end{multline*}


\subsection{Compacting variables}

Our next step (inspired by Burgess' idea) is to build one larger variable out of $m$ and $h$. Putting $a_0 \equiv a \conj{q_0 \eta} \pmod{q_1}$ and for any $x \pmod{q_1}$
\[
 \nu(x)
 = \sum_{\substack{h \neq 0 \\ (h,q_1)=1}} \hat{f} \left( \frac{hL}{q_1} \right)
  \sum_{\substack{(m,q_0)=1 \\ m \equiv x h q_0 \, ( \mo q_1)}} f \left( \frac{m}{M} \right),
\]
so we can write
\[
 S_b(L,M,N)
 = \frac{L}{q} \sum_{q_0 q_1 = q} \sum_{\eta \mid q_0} \mu(\eta)
 \sum_{x \, (q_1)} \nu(x)
 \sum_n f \left( \frac{n \eta}{N} \right)
 \e \left( \frac{a_0}{q_1} \conj{(x+b) n} \right).
\]
Note that
\begin{equation}
 \sum_{x \, (q_1)} \nu(x)^2
 \ll \frac{M}{L} q_1^{1+\eps}
\label{e519SumNu2Bd}
\end{equation}
because the number of solutions of $h' m \equiv h m' \pmod{q_1}$ in $1 \leq h,h' \leq H$ and $1 \leq m,m' \leq M$, for every $H,M \geq 1$ is bounded by $HM (1 + HM / q_1) q_1^\eps$. Now we choose a finite set $\cB$ of positive integers and sum $S_b(L,M,N)$ over $b \in \cB$. Then we apply Cauchy's inequality and~\eqref{e519SumNu2Bd} getting
\begin{equation}
 \sum_{b \in \cB} S_b(L,M,N)
 \ll q^\eps \sum_{q_0 q_1 = q} \left( \frac{LM}{q_0 q} \right)^\frac{1}{2}
  \sum_{\eta \mid q_0} T( \cB, N / \eta)^\frac{1}{2},
\label{e520SumSb}
\end{equation}
where
\[
 T(\cB,N)
 = \sum_{x \, (\mo q_1)} \left|
  \sum_n f \left( \frac{n}{N} \right)
  \sum_{b \in \cB} \e \left( \frac{a_0}{q_1} \conj{(x+b) n} \right) \right|^2.
\]
Note that we do not display (for brevity) all the floating parameters upon 
which our objects depend. For instance, $T(\cB,N)$ depends on the modulus 
$q_1$ and also on $q_0$, $\eta$ via $a_0 \equiv a \conj{q_0 \eta} \, 
\pmod{q_1}$. 
Squaring out we find
\begin{equation}
 T(\cB,N)
 = T^=(\cB,N)
  + \Tn(\cB,N),
\end{equation}
where
\begin{equation}
 \Tn(\cB,N)
 = \dsum_{b \neq b'} \sum_n \sum_{n'}
  f \left( \frac{n}{N} \right)
  f \left( \frac{n'}{N} \right)
  \sum_{x \, (q_1)} \e \left( \frac{a_0}{q_1} \left( \conj{(x + b - b')n} - \conj{xn'} \right) \right)
\label{e522TnDef}
\end{equation}
and $T^=(\cB,N)$ is the corresponding sum with $b = b'$. In this case the inner sum reduces to the Ramanujan sum (remember $(nn', q_1) = 1$ by our convention)
\[
 \sumst_{x \, (q_1)} \e \big( (n'-n) \frac{x}{q} \big)
 \ll (n'-n, q_1).
\]
Summing over $n$, $n'$, we obtain
\begin{equation}
 T^=(\cB,N)
 \ll \tau(q) N(q_1 + N) |\cB|.
\label{e523T=Bd}
\end{equation}
The other sum $\Tn(\cB,N)$ is much more difficult to estimate; we 
apply the Riemann hypothesis for varieties in the concealed form of bounds 
for 3-dimensional exponential sums.


\subsection{Correlation of Kloosterman sums}

In~\eqref{e522TnDef} we execute the summation over $n$, $n'$ by Poisson's 
formula, obtaining
\begin{equation}
 \Tn(\cB,N)
 = \left( \frac{N}{q_1} \right)^2 \dsum_{b \neq b'} \sum_v \sum_w
  \hat{f} \left( \frac{vN}{q_1} \right) \hat{f} \left( \frac{wN}{q_1} \right)
  K( b-b', a_0 v, a_0 w; q_1),
\label{e524TnPoiss}
\end{equation}
where $K(u,v,w;q_1)$ denotes the complete sum modulo $q_1$ in three variables (a kind of correlation of Kloosterman sums)
\begin{equation}
 K(u,v,w;k)
 = \tsum_{x,y,z \, (\mo k)}
 \e_k \left( \conj{(x+u)y} - \conj{xz} + vz + wy \right).
\label{e525DefK}
\end{equation}

\begin{prop} \label{p54KBd}
Let $k \geq 1$ be squarefree and $u,v,w$ be integers. We have
\begin{equation}
 K(u,v,w;k)
 \ll (u,k)^\frac{1}{2} (vw,k)^{-\frac{1}{2}} k^{\frac{3}{2} + \eps},
\label{e526KuvwkBd}
\end{equation}
where $\eps$ is any positive number, and the implied constant depends only on $\eps$. Moreover, for $u \equiv 0 \pmod{k}$ we have
\begin{equation}
 |K(0,v,w;k)|
 \leq k (v+w,k) / (vw,k).
\label{e527estK0vw}
\end{equation}
\end{prop}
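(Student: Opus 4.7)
The plan is to reduce via the Chinese Remainder Theorem to the case of prime modulus, then to expose the Kloosterman structure of the sum by carrying out the inner summations in $y$ and $z$, and finally to bound the resulting correlation of two Kloosterman sums.

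Since $k$ is squarefree, a standard CRT manipulation (with a rescaling of the inner variables by units) yields a factorization
\begin{equation*}
 K(u,v,w;k)
 = \prod_{p \mid k} K(u, v_p, w_p; p),
\end{equation*}
where $v_p, w_p$ differ from $v, w$ by unit factors modulo $p$, so in particular $(v_p, p) = (v, p)$, $(w_p, p) = (w, p)$, and $(v_p + w_p, p) = (v+w, p)$. It therefore suffices to prove the local analogues of \eqref{e526KuvwkBd} and \eqref{e527estK0vw} at each prime $p \mid k$.

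For a fixed prime $p$, the substitutions $Y = (x+u)y$ and $Z = xz$ transform~\eqref{e525DefK} into a correlation of Kloosterman sums:
\begin{equation*}
 K(u,v,w;p)
 = \sumst_{\substack{x \,(\mo p) \\ p \,\nmid\, x+u}} S\bigl(w\overline{x+u},\, 1;\, p\bigr)\, S\bigl(v\overline{x},\, -1;\, p\bigr).
\end{equation*}
In the generic case $p \nmid uvw$, each factor has size $O(\sqrt{p})$ by Weil, but the trivial combination only gives $O(p^2)$, so the crux is to extract square-root cancellation from the outer $x$-sum and reach $O(p^{3/2})$. This is a classical correlation bound for two Kloosterman sums along the affine line with rational argument shifts; the standard route is to open both Kloosterman sums, evaluate the resulting $x$-summation as a Ramanujan sum, and invoke Deligne's extension of Weil's theorem for a smooth curve of bounded genus over $\mathbb{F}_p$. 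This is the main technical obstacle.

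The degenerate local cases are simpler. If $p \mid v$ (respectively $p \mid w$), the $z$-sum (respectively $y$-sum) degenerates to a Ramanujan sum, leaving a single Kloosterman-type sum over the remaining variables to which Weil applies, producing a bound much smaller than the budget $p^{3/2}(vw, p)^{-1/2}$. If $p \mid u$, the factor $(u,p)^{1/2} = p^{1/2}$ in the announced bound allows a larger local estimate, which follows from part (2) together with a trivial appraisal of the outer sum. For part~(2), when $u \equiv 0 \pmod k$, the substitutions $y \mapsto \overline{x}y$, $z \mapsto \overline{x}z$ in the local sum and then summation over $x$ give
\begin{equation*}
 K(0,v,w;p)
 = \sumst_{y, z \,(\mo p)} \e_p\bigl(\overline{y} - \overline{z}\bigr)\, c_p(v z + w y),
\end{equation*}
where $c_p$ is the Ramanujan sum. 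Splitting the $(y,z)$-sum according to whether $p \mid vz + wy$ and using $\sum_y^* \e_p(\overline{y}) = -1$ permit an explicit evaluation of $K(0,v,w;p)$ in each of the combinatorial cases determined by whether $p$ divides $v$, $w$, and (when $p \nmid vw$) $v+w$; the local inequality $|K(0,v,w;p)| \leq p \cdot (v+w, p)/(vw, p)$ is verified in each case, and multiplying over $p \mid k$ gives~\eqref{e527estK0vw}.
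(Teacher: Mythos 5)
Your overall architecture matches the paper's: reduce via multiplicativity to a prime modulus $p$, then split into cases according to which of $u$, $v$, $w$ the prime divides. Your part~(2) computation is essentially identical to the paper's (the paper's substitution $y\to xy$, $z\to xz$ versus your $y\to\bar xy$, $z\to\bar xz$ both land on $\sum^*_{y,z} \e_p(\bar y-\bar z)R(vz+wy,p)$, and the ensuing case-by-case evaluation gives $R(v+w,p)p-R(v,p)R(w,p)$), and your treatment of the degenerate local cases $p\nmid u$, $p\mid vw$ is correct and matches the paper's reduction to a single Kloosterman sum plus a Ramanujan sum. The reformulation of $K(u,v,w;p)$ as the correlation $\sum^*_x S(w\overline{x+u},1;p)\,S(v\bar x,-1;p)$ is a valid alternative point of view that the paper does not state explicitly, though it works directly with the three-variable sum.

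There is, however, a genuine gap precisely where you flag ``the main technical obstacle.'' Your sketch for the generic case $p\nmid uvw$ --- ``open both Kloosterman sums, evaluate the resulting $x$-summation as a Ramanujan sum, and invoke Deligne's extension of Weil's theorem for a smooth curve of bounded genus'' --- does not work as described. Opening both Kloosterman sums produces
\[
K(u,v,w;p)=\sum^*_{Y}\sum^*_{Z}\e_p\bigl(\overline Y-\overline Z\bigr)\sum^*_{x}\e_p\bigl(wY\,\overline{x+u}+vZ\,\bar x\bigr),
\]
and the inner $x$-sum is \emph{not} a Ramanujan sum: it is a two-pole rational-function exponential sum of Kloosterman type, bounded by $O(\sqrt p)$ via Weil's curve theorem. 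Feeding that into the trivial bound on the remaining double sum only gives $O(p^{5/2})$, far short of $O(p^{3/2})$. The required square-root cancellation in the outer two variables is not a Weil-for-curves statement; it is the content of the Birch--Bombieri appendix to \cite{FI1}, where the three-variable sum (equivalently, the correlation of two Kloosterman sheaves along the affine line, in the modern language of Katz and Fouvry--Kowalski--Michel) is bounded using Deligne's theory for higher-dimensional varieties. The paper simply cites that appendix; your proposal neither cites it nor supplies a correct substitute, and the route it does describe breaks at the Ramanujan-sum step. Everything else in your argument is sound, but without a correct treatment of the generic local case the proposition is not proved.
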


\begin{proof}
By multiplicativity we can assume that $k = p$ is prime. We shall compute $K(u,v,w;p)$ in all cases, except for $p \nmid uvw$. If $p \mid u$ we get
\begin{align*}
 K(0,v,w;p)
 &= \sum_x \sum_y \sum_z \e_p \left( \conj{xy} - \conj{xz} + vz + wy \right)
\\
 &= R(v+w,p)p - R(v,p) R(w,p)
  =
  \begin{cases}
   R(v+w,p), & \text{if } p \mid vw, \\
   R(v+w,p)p -1, & \text{if } p \nmid vw,
  \end{cases}
\end{align*}
where $R(a,p)$ is the Ramanujan sum; $|R(a,p)| < (a,p)$. Hence
\[
 |K(0,v,w;p)|
 < p(v+w,p) / (vw,p)
\]
which proves~\eqref{e527estK0vw}. Now let $p \nmid u$, $p \mid v$. We get
\begin{align*}
 K(u,0,w;p)
 &= \sum_x \sum_y \sum_z \e_p \left( \conj{(x+u)y} - \conj{xz} + wy \right)
\\
 &= - \sum_y \sum_{x \not\equiv 0} \e_p \left( \conj{(x+u)y} + wy \right)
  = K(\conj{u},w;p) + R(w,p),
\end{align*}
where $K(\conj{u},w;p)$ denotes the classical Kloosterman sum. Similarly 
we evaluate \break $K(u,v,0;p) = K(\conj{u},v;p) + R(v,p)$. In both cases, 
using Weil's bound we obtain
\[
 |K(u,v,w;p)|
 \leq 3(v+w,p)^\frac{1}{2} p^\frac{1}{2},
  \qquad \text{if } p \nmid u, \, p \mid vw.
\]
This bound easily satisfies~\eqref{e526KuvwkBd}. Finally, if $p \nmid uvw$ 
we cannot compute $K(u,v,w;p)$, but we have the bound (see the Appendix 
in~\cite{FI1} by Birch and Bombieri)
\[
 K(u,v,w;p)
 \ll p^\frac{3}{2}.
\]
Clearly, this bound satisfies~\eqref{e526KuvwkBd}. Combining these results 
one completes the proof of Proposition~\ref{p54KBd}. 
\end{proof}

\begin{rem}
Sums of type~\eqref{e525DefK} have been recently considered among much more 
general sums in a conceptual framework by~\cite{FKM1}.
\end{rem}


\subsection{Completing the proof of Proposition~\ref{p51EBd}}

Now, we proceed to the estimation of $\Tn(\cB,N)$, starting from the 
formula~\eqref{e524TnPoiss}, by applying Proposition~\ref{p54KBd}. This 
constitutes the core of the proof of Proposition~\ref{p51EBd}. At this point, 
we are going to exploit the divisor $s$ of $q$ and, as Zhang~\cite{Z}, we 
take the set $\cB$ of positive integers which are divisible by $s$;
\[
 \cB
 = \{ b = cs \, ; \, 1 \leq c \leq C \},
\]
where $C = |\cB|$ will be chosen later to optimize the results. The case 
$s = 1$ (our original setup in~[FI3]) 
does lead to pretty strong results. However, by allowing $s$ to be a 
nontrivial divisor of $q$, Zhang succeeded to produce improvements, 
crucial for this application, which are due to the estimates for 
the Ramanujan sums being better than those for the correlation 
Kloosterman sums (compare~\eqref{e527estK0vw} 
and~\eqref{e526KuvwkBd}).

Put $s_1 = s / (s,q_0)$ and $r = q / [s,q_0] = q (s,q_0) / s q_0$. We have
\[
 q_1 = r s_1,
\qquad
 s_1 \mid s,
\qquad
 (r,s)=1.
\]
The complete sum of modulus $q_1$ in~\eqref{e524TnPoiss} factors into the 
complete sum of modulus $r$ and the complete sum of modulus $s_1$, but with 
the exponentials (additive characters) twisted by the factors 
$\conj{s_1} \pmod{r}$ and $\conj{r_1} \pmod{s_1}$, respectively. 
By~\eqref{e526KuvwkBd} we get
\[
 K(*,*,*;r)
 \ll (c-c',r)^\frac{1}{2} (vw,r)^{-\frac{1}{2}} r^{\frac{3}{2} + \eps},
\]
and by~\eqref{e527estK0vw} we get
\[
 K(*,*,*;s_1)
 \ll s_1 (v+w,s_1) / (vw,s_1).
\]
Inserting these estimates into~\eqref{e524TnPoiss} and summing over 
$1 \leq c \neq c' \leq C$ we obtain
\begin{equation}
 \Tn(\cB,N)
 \ll q^\eps \frac{C^2 N^2}{q_1}
  \sum_v \sum_w \left| \hat{f} \left( \frac{vN}{q_1} \right) 
\hat{f} \left(\frac{wN}{q_1}\right) \right|
  \frac{(v+w,s_1)}{(vw,s_1)} \left( \frac{r}{(vw,r)} \right)^\frac{1}{2}.
\label{e528TnBd1}
\end{equation}
The terms $v = w = 0$ contribute $|\hat{f}(0)|^2$. The terms $v = 0$, 
$w \neq 0$, and the terms $v \neq 0$, $w = 0$ contribute
\[
 2 |\hat{f}(0)| \sum_{w \neq 0} \left| \hat{f} 
\left( \frac{wN}{q_1} \right) \right|
  \frac{(w,s_1)}{s_1}
 \ll \frac{q_1 \tau(s_1)}{N s_1}.
\]
The terms $v = -w \neq 0$ contribute at most
\[
 2 r^\frac{1}{2} s_1 \sum_{w \neq 0} 
\left| f \left( \frac{wN}{q_1} \right) \right|^2
 \ll r^\frac{1}{2} s_1 q_1 / N.
\]
The terms $vw (v+w) \neq 0$ contribute at most
\[
 r^\frac{1}{2} \dsum_{vw(v+w) \neq 0} \left|
  \hat{f} \left(\frac{vN}{q_1} \right) \hat{f} \left( \frac{wN}{q_1} \right)
  \right|
  (v+w,s_1).
\]
Here, we apply the following estimates for the Fourier transforms
\[
 |\hat{f}(v) \hat{f}(w)|
 \ll (1 + |v|)^{-2} (1 + |w|)^{-4}
 \leq (1 + |v+w|)^{-2} (1 + |w|)^{-2}.
\]
Hence, the contribution of terms $vw (v+w) \neq 0$ is bounded by
\[
 r^\frac{1}{2} \sum_{w > 0} \left( 1 + \frac{wN}{q_1} \right)^{-2}
  \sum_{t > 0} \left( 1 + \frac{tN}{q_1} \right)^{-2} (t,s_1)
 \ll \tau(s_1) r^\frac{1}{2} q_1^2 N^{-2}.
\]
Inserting the above estimates into~\eqref{e528TnBd1} we get
\[
 \Tn(\cB,N)
 \ll q^\eps \frac{C^2 N^2}{q_1} \left\{
  1
  + \frac{q_1}{N s_1}
  + \frac{r^\frac{1}{2} s_1 q_1}{N}
  + \frac{r^\frac{1}{2} q_1^2}{N^2}
  \right\}
 \ll C^2 s^{-\frac{1}{2}} q^{\frac{3}{2} + \eps}, 
\]
because $sN < q$ by~\eqref{e54HypEBd}. 
Adding~\eqref{e523T=Bd}, we obtain
\[
 T(\cB,N)
 \ll q^\eps ( CNq + C^2 s^{-\frac{1}{2}} q^\frac{3}{2} ).
\]
Next, inserting this into~\eqref{e520SumSb} we have
\[
 \frac{1}{|\cB|} \sum_{b \in \cB} S_b(L,M,N)
 \ll q^\eps \left( \frac{LMN}{C} \right)^\frac{1}{2}
  + q^\eps \left( \frac{q}{s} \right)^\frac{1}{4} (LM)^\frac{1}{2}.
\]
Adding this to~\eqref{e518S*estPartSb} we find
\[
 S^*(L,M,N)
 \ll x^\eps \left(
  \left( \frac{x}{c} \right)^\frac{1}{2}
  + \left( \frac{q}{s} \right)^\frac{1}{4} \left( \frac{x}{N} \right)^\frac{1}{2}
  + C s N
  \right).
\]
Here $C$ is any positive integer, but, of course, the result holds for any real number $C \geq 1$. We take $C = (x/ S^2 N^2)^{1/3} \geq 1$ by~\eqref{e54HypEBd}, getting
\[
 S^*(L,M,N)
 \ll x^\eps \left( \frac{q}{s} \right)^\frac{1}{4} \left(\frac{x}{N}\right)^\frac{1}{2}
  + x^\eps (xsN)^\frac{1}{3}.
\]
Hence, by~\eqref{e512Ssplit} we arrive at~\eqref{e55EBd}, which completes the proof of Proposition~\ref{p51EBd}.



\begin{bibdiv}
\begin{biblist}

\bib{BD}{article}{
   author={Bombieri, E.},
   author={Davenport, H.},
   title={Small differences between prime numbers},
   journal={Proc. Roy. Soc. Ser. A},
   volume={293},
   date={1966},
   pages={1--18},
}

\bib{BFI1}{article}{
    author={Bombieri, E.},
    author={Friedlander, J. B.}, 
    author={Iwaniec, H.},
	TITLE = {Primes in arithmetic progressions to large moduli},
	JOURNAL = {Acta Math.},
    VOLUME = {156},
	YEAR = {1986},
    NUMBER = {3-4},
    PAGES = {203--251},
}

\bib{BFI2}{article}{
    author={Bombieri, E.}, 
    author={Friedlander, J. B.},  
    author={Iwaniec, H.},
    TITLE = {Primes in arithmetic progressions to large moduli. {II}},
	JOURNAL = {Math. Ann.},
    VOLUME = {277},
    YEAR = {1987},
    NUMBER = {3},
    PAGES = {361--393},
}

\bib{BFI3}{article}{
    author={Bombieri, E.}, 
    author={Friedlander, J. B.},
    author={Iwaniec, H.},
    TITLE = {Primes in arithmetic progressions to large moduli. {III}},
	JOURNAL = {J. Amer. Math. Soc.},
	VOLUME = {2},
    YEAR = {1989},
    NUMBER = {2},
    PAGES = {215--224},
}
		
\bib{E}{article}{
    author={Erd{\H o}s, P.},
    TITLE = {The difference of consecutive primes},
    JOURNAL = {Duke Math. J.},
    VOLUME = {6},
    YEAR = {1940},
    PAGES = {438--441},
}

\bib{F}{article}{
    author={Fouvry, E.},
    TITLE = {Autour du th\'eor\`eme de {B}ombieri-{V}inogradov},
	JOURNAL = {Acta Math.},
    VOLUME = {152},
    YEAR = {1984},
    NUMBER = {3-4},
    PAGES = {219--244},
}

\bib{FoIw}{article}{
    author={Fouvry, E.}, 
	author={Iwaniec, H.},
    TITLE = {Primes in arithmetic progressions},
    JOURNAL = {Acta Arith.},
    VOLUME = {42},
    YEAR = {1983},
    NUMBER = {2},
    PAGES = {197--218},
}

\bib{FKM1}{article}{
	author={Fouvry, E.},
	author={Kowalski, E.}, 
	author={Michel, P.},
	TITLE = {Algebraic twists of modular forms and Hecke orbits}, 
        YEAR = {2012},         
	note = {Preprint},         
}

 \bib{FKM2}{article}{
	author={Fouvry, E.},
	author={Kowalski, E.}, 
	author={Michel, P.},
	TITLE = {On the exponent of distribution of the ternary divisor function},
        YEAR = {2013}, 
        note = {Preprint}, 
}

\bib{FI1}{article}{
    author={Friedlander, J. B.}, 
	author={Iwaniec, H.},
    TITLE = {Incomplete {K}loosterman sums and a divisor problem},
	JOURNAL = {Ann. of Math. (2)},
	VOLUME = {121},
    YEAR = {1985},
    NUMBER = {2},
    PAGES = {319--350},
}

\bib{FI2}{book}{
    author={Friedlander, J. B.}, 
	author={Iwaniec, H.},
    TITLE = {Opera de cribro},
    SERIES = {American Mathematical Society Colloquium Publications},
	VOLUME = {57},
	PUBLISHER = {American Mathematical Society},
    ADDRESS = {Providence, RI},
    YEAR = {2010},
}

\bib{GY}{article}{
    author={Goldston, D. A.},
	author={Y{\i}ld{\i}r{\i}m, C. Y.},
    TITLE = {Higher correlations of divisor sums related to primes. {III}. Small gaps between primes},
	JOURNAL = {Proc. Lond. Math. Soc. (3)},
	VOLUME = {95},
    YEAR = {2007},
    NUMBER = {3},
    PAGES = {653--686},
}

\bib{GPY}{article}{
    author={Goldston, D. A.}, 
	author={Pintz, J.},
	author={Y{\i}ld{\i}r{\i}m, C. Y.},
    TITLE = {Primes in tuples. {I}},
    JOURNAL = {Ann. of Math. (2)},
     VOLUME = {170},
    YEAR = {2009},
    NUMBER = {2},
    PAGES = {819--862},
}

\bib{H}{article}{
    author={Huxley, M. N.},
    TITLE = {On the differences of primes in arithmetical progressions},
    JOURNAL = {Acta Arith.},
      VOLUME = {15},
    YEAR = {1968/1969},
    PAGES = {367--392},
}
		
\bib{M}{article}{
    author={Maier, H.},
    TITLE = {Small differences between prime numbers},
	JOURNAL = {Michigan Math. J.},
      VOLUME = {35},
    YEAR = {1988},
    NUMBER = {3},
    PAGES = {323--344},
}

\bib{MP}{article}{
    author={Motohashi, Y.}, 
	author={Pintz, J.},
    TITLE = {A smoothed {GPY} sieve},
    JOURNAL = {Bull. Lond. Math. Soc.},
    VOLUME = {40},
    YEAR = {2008},
    NUMBER = {2},
    PAGES = {298--310},
}

\bib{Z}{article}{
	author={Zhang, Y.},
	title = {Bounded gaps between primes},
	note = {To appear in Ann. Math.},
        YEAR = {2013} 
}

\end{biblist}
\end{bibdiv}

\end{document}